\documentclass[final,hidelinks,onefignum,onetabnum]{siamart250211}
\usepackage{geometry}
\usepackage{bm,mathtools}
\usepackage{algorithm}
\usepackage{booktabs}
\usepackage{mathrsfs}

\usepackage{lipsum}
\usepackage{amsfonts}
\usepackage{graphicx}
\usepackage{epstopdf}
\usepackage{algorithmic}
\ifpdf
  \DeclareGraphicsExtensions{.eps,.pdf,.png,.jpg}
\else
  \DeclareGraphicsExtensions{.eps}
\fi

\newsiamremark{remark}{Remark}
\newsiamremark{hypothesis}{Hypothesis}
\crefname{hypothesis}{Hypothesis}{Hypotheses}
\newsiamthm{claim}{Claim}
\newsiamremark{fact}{Fact}
\crefname{fact}{Fact}{Facts}

\headers{Unconditional Optimal Error Estimates and Energy Stability}{Yongyong Cai and Xingwei Yang }

\title{Unconditional Optimal Error Estimates and Energy Stability for a Linearly Implicit Mass-Lumped Projection Finite Element Method for the Harmonic Map Flow}

\author{Yongyong Cai\thanks{Laboratory of Mathematics and Complex Systems and School of Mathematical Sciences, Beijing
Normal University, Beijing 100875, China (yongyong.cai@bnu.edu.cn)}
\and Xingwei Yang\thanks{Corresponding author. Laboratory of Mathematics and Complex Systems and School of Mathematical Sciences, Beijing
Normal University, Beijing 100875, China (\email{xingwei.yang@mail.bnu.edu.cn}).}}

\usepackage{amsopn}

\begin{document}
\newcommand{\norm}[1]{\left\Vert#1\right\Vert}
\newcommand{\curl}{\nabla\times}
\newcommand{\grad}{\nabla}
\newcommand{\Qed}{\hfill\qedsymbol}
\renewcommand{\div}{\nabla\cdot}
\newcommand{\R}{\mathbb{R}}
\newcommand{\m}{\mathbf{m}}
\newcommand{\ppsi}{\bm{\psi}}
\renewcommand{\H}{\mathbf{H}}
\newcommand{\B}{\mathbf{B}}
\renewcommand{\v}{\mathbf{v}}
\newcommand{\w}{\mathbf{w}}
\renewcommand{\a}{\mathbf{a}}
\renewcommand{\b}{\mathbf{b}}
\renewcommand{\u}{\mathbf{u}}
\newcommand{\s}{\mathbf{s}}
\newcommand{\J}{\mathbf{J}}
\newcommand{\F}{\mathbf{F}}
\newcommand{\E}{\mathbf{E}}
\newcommand{\Ep}{\mathcal{E}}
\renewcommand{\S}{\mathbf{S}}
\newcommand{\Q}{\mathbf{Q}}
\newcommand{\eff}{\rm{eff}}
\newcommand{\n}{\mathbf{n}}
\newcommand{\f}{\mathbf{f}}
\newcommand{\g}{\mathbf{g}}
\newcommand{\M}{\mathbf{M}}
\newcommand{\x}{\mathbf{x}}
\newcommand{\z}{\mathbf{z}}
\renewcommand{\d}{\mathrm{d}}
\renewcommand{\hat}{\widehat}
\newcommand{\e}{\mathbf{e}}
\renewcommand{\l}{\Big(}
\renewcommand{\r}{\Big)}
\renewcommand{\tilde}[1]{\widetilde{#1}}
\allowdisplaybreaks[4]
\maketitle

\begin{abstract}
We propose and analyze a linearly implicit mass-lumped finite element
method for the heat flow of harmonic maps into the unit sphere. The
method consists of a linear predictor followed by a nodal projection
and therefore preserves the unit-length constraint exactly at all finite
element nodes. The predictor is derived from a cross-product
reformulation of the equation and is shown to be equivalent to a
mass-lumped discretization of the original formulation with a correction
term enforcing nodal orthogonality, as well as to a tangent
plane scheme.
A key ingredient is the consistent use of the discrete inner product in
both the mass and stiffness terms. This yields a nodal orthogonality
relation implying that the auxiliary solution lies on or outside the unit sphere
at every node. Consequently, the projection is well defined and the
projected error satisfies a contraction property in the discrete
\(L^2\)-norm. On Cartesian rectangular and cuboidal tensor-product
meshes, the nodal projection is also nonexpansive in a discrete
Dirichlet energy, which gives an unconditional discrete energy
dissipation law.
For sufficiently smooth solutions, we prove optimal error estimates
without any coupling condition between the time step and the mesh size:
the method converges with order \(O(\Delta t+h^2)\) in
\(\ell^\infty(0,T;L^2)\) and order \(O(\Delta t+h)\) in
\(\ell^2(0,T;H^1)\). The proof combines the projected-error contraction,
quadrature consistency estimates, edge-based cancellation identities,
and a bootstrap argument for controlling nonlinear terms. Numerical
experiments confirm the predicted convergence rates and the discrete
energy decay.
\end{abstract}

\begin{keywords}
  heat flow of harmonic maps, finite element methods, unconditional convergence, mass lumping
\end{keywords}

\begin{MSCcodes}
65M60, 65M12, 65M15, 35K55, 58E20
\end{MSCcodes}
\section{Introduction}
For a bounded Lipschitz domain $\Omega\subset \mathbb{R}^d$ with $d \in \{1, 2, 3\}$ and given $T>0$, the heat flow of harmonic maps reads as 
\begin{align}
  \partial_t \m &= \Delta \m+|\grad \m|^2\m\quad &&\text{ in }\Omega\times (0,T],\label{a7}\\
  \partial_{\mathbf{n}}\m &= \mathbf{0}\quad &&\text{ on }\partial\Omega\times(0,T],\\
  \m& = \m^0\quad &&\text{ in }\Omega\times \{0\},\label{c4}
\end{align}
where the initial data $\m^0\in \H^1(\Omega)$ satisfies $|\m^0(\x)| = 1$ for almost every $\x\in \Omega$. 
The symbol $|\cdot|$ denotes the Euclidean norm for a vector and the Frobenius norm for a matrix, $\grad$ is the gradient operator, $\Delta$ is the Laplacian operator, $\mathbf{n}$ is the outward unit normal vector on the boundary $\partial\Omega$. Moreover, $\partial_t\m$ and $\partial_{\mathbf{n}}\m$ denote the time derivative and the normal derivative of $\m$, respectively. Denoting by $\mathbb{S}^2$ the unit sphere in $\mathbb{R}^3$, the solution to \eqref{a7}--\eqref{c4} intrinsically belongs to $\mathbb{S}^2$, i.e., $\m(\x, t)$ satisfies the pointwise constraint
\begin{equation}\label{e4}
  |\m(\x, t)|=1\quad \text{ in }\Omega\times(0,T].
\end{equation}
It also satisfies the following energy identity for $a.e.$ $t\in (0,T]$:
\[
E(\m(\x, t))+\int_0^t \norm{\m(\x, s)\times\Delta\m(\x, s)}_{L^2}^2\ \d s = E(\m^0),\quad E(\m) \coloneqq \frac{1}{2}\int_\Omega |\grad \m|^2 \mathrm{d}\mathbf{x}.
\]
The relevant results on local existence, uniqueness, and finite-time blow-up behavior for the system \eqref{a7}--\eqref{c4} can be found in \cite{MR1705180, MR990191}.
As a fundamental equation in many models whose exact solution inherently satisfies a unit-length constraint, the heat flow of harmonic maps has natural applications in various physical scenarios, e.g., the Ericksen--Leslie model for nematic liquid crystal flow \cite{MR2399392}, the Landau--Lifshitz equation for magnetization dynamics \cite{landau1935theory, gilbert1955lagrangian}, color image denoising \cite{MR1974176}, and mean curvature flow \cite{MR4026373}.

For the numerical approximation of the evolution problem \eqref{a7}--\eqref{c4} and the related Landau-Lifshitz equation, optimal error estimates of finite element methods (FEMs) have been studied recently. Gao established the optimal unconditional convergence result for the Landau-Lifshitz equation in \cite{MR3273326} by a new linearization and the error splitting technique developed in \cite{MR3072763}. Akrivis et al.\ \cite{MR4232216} derived optimal-order error estimates using high-order backward difference formula (BDF) time discretizations. More recently, an optimal $H^1$ error estimate was established for a tangent plane finite element scheme applied to the heat flow of harmonic maps, subject to the condition $\Delta t\le C_mh^{\frac{1}{2}}$ for sufficiently small $C_m$ \cite{MR4727106}. Nevertheless, these schemes only preserve the unit-length constraint approximately. For the development of constraint accuracy for the heat flow of harmonic maps, we refer to \cite{MR5056183, MR3454357}.

To preserve the unit-length constraint \eqref{e4} at the discrete level, there are many works including the penalization method \cite{MR1885923}, the projection method \cite{MR1813249}, the midpoint scheme \cite{MR2257110, MR2152000}, the tangent plane projection scheme \cite{MR2210092, MR2379897}, and the Lagrange multiplier method \cite{MR3719027, MR4579735}. Due to the simplicity of its implementation, the projection approach has been used in various applications. Optimal convergence results for the backward Euler and Crank-Nicolson semi-implicit finite difference projection methods were derived under the conditions $h^2\le \Delta t\le h^{1+\epsilon_1} $ and $c_1h\le \Delta t\le c_2h$ respectively in \cite{MR4272916}, where $\epsilon_1\in (0,1)$ is a fixed constant. In \cite{MR4454924}, the optimal convergence rates for the backward Euler FEM were established under the conditions $c_1h\le \Delta t\le c_2h$ and $r\ge 2$, where $c_1$ and $c_2$ are fixed positive constants and $r$ is the polynomial degree of the finite element space. A weaker condition $\Delta t\ge \kappa h^{r+1}$ for optimal-order error estimates is derived by employing tensor-product finite elements on rectangular and cuboidal meshes \cite{MR4377027}. Extension to the Landau-Lifshitz equation with the Crank-Nicolson scheme can be found in \cite{MR5004067}. The multistep projection methods for the harmonic map heat flow into general surfaces were presented in \cite{MR4883780}, where a similar constraint $\kappa h^{r+1}\le \Delta t^k\le \Delta t_0$ was imposed for the $k$-step BDF time discretization.
In \cite{MR4966454}, the optimal $H^1$ error estimate for the normalized tangent plane method was established under the time step condition $O(h^{\frac{7}{4}})\le \Delta t\le O(h)$. Some semi-discrete convergence results can also be found in \cite{MR1681053,MR5035027}. 

To the best of our knowledge, an optimal-order error estimate without
any coupling condition between \(\Delta t\) and \(h\) has not been
available for fully discrete finite element schemes that
preserve the unit-length constraint at all nodes. The present paper
closes this gap for a mass-lumped FEM on Cartesian
rectangular and cuboidal tensor-product meshes. The analytical mechanism developed here may be useful for related geometric-flow systems, such as $p$-harmonic maps, Landau--Lifshitz-type
equations, and certain nematic liquid crystal models. A rigorous extension to these systems, however, requires additional arguments and is left for future work. The key observation behind the proposed scheme is as follows.

For a typical projection method, the time derivative gives rise to a
term of the form
\[
\left(\frac{\tilde \m_h^{n+1}-\m_h^n}{\Delta t},\v_h\right),
\qquad
\m_h^{n+1}=I_h\left(\frac{\tilde \m_h^{n+1}}
{|\tilde \m_h^{n+1}|}\right),
\]
where \(I_h\) is the Lagrange interpolation operator, \(\m_h^n\) is the
projected solution from the previous time level, \(\tilde\m_h^{n+1}\) is
the current auxiliary solution, and \(\v_h\) is a test function.
The corresponding error equation contains
\[
\left(\frac{\tilde \e_h^{n+1}-\e_h^n}{\Delta t},\v_h\right),
\]
and therefore the relation between the projected error \(\e_h^{n+1} = I_h\m(t_{n+1})-\m_h^{n+1}\)
and the predictor error \(\tilde\e_h^{n+1} = I_h\m(t_{n+1})-\tilde{\m}_h^{n+1}\) becomes a central issue. In \cite{MR4268651}, the following relation
\[
\norm{\e_h^{n+1}}_h^2\le C\norm{\tilde{\e}_h^{n+1}}_h^2,
\]
is employed for the semi-renormalized scheme, where $\norm{\cdot}_h$ denotes the discrete $L^2$ norm. However, reliance on this estimate precludes the extension of the error analysis to a fully renormalized FEM unless an additional stepsize constraint $\Delta t\ge h$ is imposed. An improved relation
\begin{equation}\label{e5}
\norm{\e_h^{n+1}}_h^2\le \norm{\tilde{\e}_h^{n+1}}_h^2 + \text{higher-order terms}
\end{equation}
under a weaker condition $\Delta t\ge \kappa h^{r+1}$ is established in \cite{MR4377027}. The relation \eqref{e5} depends on the property that $|\m_h^{n+1}(\z)|=1$ for all finite element nodes $\z$, which naturally motivates using the discrete inner product for the discrete time derivative
\[
\left(\frac{\tilde{\m}_h^{n+1} - \m_h^{n}}{\Delta t}, \v_h\right)_h.
\]
Considering the analogous form of the error equation again, the following
inherent property of the numerical scheme is expected to eliminate the
time-step restriction arising from the relation between the errors:
\begin{align}
\norm{ \e_h^{n+1}}_h\le \norm{ \tilde\e_h^{n+1}}_h.\label{a9}
\end{align}
It is worth noting that the relation \eqref{a9} has been derived for the tangent plane projection scheme in \cite{MR4966454}; however, a restrictive  condition $O(h^{\frac{7}{4}})\le \Delta t\le O(h)$ is required to ensure that the sum of projection errors remains controllable in the standard $L^2$-norm. Inspired by these observations, the construction of a fully discrete scheme that intrinsically satisfies property \eqref{a9} relies on the fact that the auxiliary solution lies on or outside the unit sphere at the finite element nodes, i.e.,
\begin{align}\label{c7}
|\tilde{\m}_h^{n+1}(\z)|\ge |\m_h^{n}(\z)|=1\quad \forall \z\in \mathcal{Q},
\end{align}
where $\mathcal{Q}$ is the set of all finite element nodes. The condition \eqref{c7} is a direct consequence of the orthogonality condition
\begin{align}\label{c6}
&\frac{\tilde{\m}_h^{n+1}(\z) - \m_h^{n}(\z)}{\Delta t}\cdot\m_h^n(\z) = 0, \quad \forall \z\in \mathcal{Q}.
\end{align}
Motivated by this, we use the identity
\begin{equation}\label{o6}
\mathbf{a}\times(\mathbf{b}\times \mathbf{c}) = (\mathbf{a}\cdot \mathbf{c})\mathbf{b}-(\mathbf{a}\cdot \mathbf{b})\mathbf{c}, \quad \mathbf{a}, \mathbf{b}, \mathbf{c} \in \mathbb{R}^3
\end{equation}
to rewrite \eqref{a7}--\eqref{c4} into the following cross-product form
\begin{align}
  \partial_t \m &= -\m\times(\m\times\Delta \m) \quad &&\text{ in }\Omega\times (0,T],\label{t4}\\
  \partial_{\mathbf{n}}\m &= \mathbf{0}\quad &&\text{ on }\partial\Omega\times(0,T],\\
  \m& = \m^0\quad &&\text{ in }\Omega\times \{0\},\label{t5}
\end{align}
and propose the following scheme based on the above equivalent form with $\m_h^0 = \tilde\m_h^0 = I_h\m^0$
\begin{align}
\l \frac{\tilde{\m}_h^{n+1} - \m_h^{n}}{\Delta t},\v_h \r_h=& -\l\m_h^n\times\l\m_h^n\times\tilde\Delta_h \tilde{\m}_h^{n+1}\r,\v_h\r_h,\label{a2}\\
\m_h^{n+1} =& I_h\l \frac{\tilde{\m}_h^{n+1}}{|\tilde{\m}_h^{n+1}|}\r,\label{a8}
\end{align}
where
\[
    (\tilde\Delta_h\v_h, \bar\v_h)_h = -(\grad\v_h,\grad\bar\v_h)_h, \quad \forall\v_h, \overline{\v}_h\in \S_h^1.
\]
In contrast to schemes where mass lumping is applied only to the time
derivative, we also evaluate the stiffness term by
quadrature. Since the gradient of a \(Q_1\) function is taken
elementwise, this defines a broken quadrature-based stiffness form. This choice is essential for the nodal orthogonality relation and for
the unconditional decay of the discrete Dirichlet energy. There are some studies that use quadrature to compute the stiffness term; see, e.g., \cite{MR816063, MR4431066, MR4529401}. 
Note that one can also use the standard inner product to define the present discrete Laplacian operator $\tilde\Delta_h$ (see \eqref{b4}). Although this change does not affect the key property \eqref{a9}, the unconditional discrete energy stability and optimal error analysis would be much more difficult to establish.

Given that $|\m_h^{n}(\z)| = 1$ for all $\z\in \mathcal{Q}$, choosing the test function $\v_h = \phi_{\z}\m_h^n(\z)$ in \eqref{a2}, where $\phi_{\z}$ denotes the nodal basis function at $\z$, we obtain the orthogonality condition \eqref{c6} on every finite element node.
It follows directly that condition \eqref{c7} is satisfied. The well-definedness of the projection step is thus ensured. Finally, the proof of relation \eqref{a9} for scheme \eqref{a2}--\eqref{a8} is provided in Lemma~\ref{lemma4}.

Although the difficulty arising from the projection step is overcome by the relation \eqref{a9}, the use of quadrature-based discrete inner products in the fully discrete formulation makes it necessary to control the resulting
quadrature errors. In addition to standard estimates based on the Bramble--Hilbert lemma, we establish edge-based cancellation estimates induced by the nodal unit-length and orthogonality relations, which are crucial for controlling the quadrature error and the nonlinear terms in the error equation.  Moreover, uniform \(L^\infty\)- and \(W^{1,4}\)-type
bounds for the errors and the numerical solution are obtained without
any coupling condition between \(\Delta t\) and \(h\), by combining
discrete Sobolev inequalities, norm equivalences, the comparison between
the two discrete Laplacians, and a bootstrap induction argument. These bounds are crucial for controlling the nonlinear terms in the error equation.  To guarantee that the projected solution is energy-decreasing, two aspects need to be satisfied simultaneously: i) the predictor scheme is energy-decreasing and ii) the projection step is also energy non-increasing, i.e., the projection is energy stable \cite{MR1472192,MR2177142}. For this purpose, we first establish a stability estimate for the nodal
projection on quadrilateral/hexahedral meshes by using \eqref{c7}. In the Cartesian rectangular/cuboidal case, this
estimate becomes nonexpansive in the discrete Dirichlet energy. Combining
this projection nonexpansiveness with the energy decay of the predictor
step \eqref{a2}, we obtain the discrete energy decay for the fully
projected scheme.

The rest of the paper is organized as follows. Section~\ref{sec2}
introduces the notation, the discrete setting, and equivalent
formulations of the proposed scheme. Section~\ref{sec3} states the main
stability and convergence results. Section~\ref{sec4} is devoted to the
proof of the unconditional optimal-order error estimates. Numerical
experiments are reported in Section~\ref{sec5}. Finally, concluding remarks are given in Section \ref{sec6}.
\section{Preliminaries and equivalent formulations}\label{sec2}
\subsection{Notations}
Throughout this paper, $c$, $C$ (with or without subscripts) and $\epsilon$ denote generic positive constants that are independent of the spatial and temporal discretization parameters, where $\epsilon$ can be taken arbitrarily small. We employ $L^p(\Omega)$, $H^k(\Omega)$, and $W^{k,p}(\Omega)$ for the usual Sobolev spaces, with corresponding norms $\norm{\cdot}_{L^p}, \norm{\cdot}_k$ and $\norm{\cdot}_{W^{k,p}}$. The standard notations $(\cdot, \cdot)$ and $\norm{\cdot}$ denote the $L^2(\Omega)$ inner product and norm, respectively. Vector-valued function spaces are denoted in boldface. For simplicity, the domain $\Omega$ will be omitted in the notation of function spaces hereafter. 
\subsection{The discrete settings}
The total time $T$ is partitioned into $N$ uniform time intervals $[t^{n}, t^{n+1}]$ for $ n=0, \cdots, N-1$, where $t^n = n\Delta t$ and $\Delta t = T/N$.
Let $\Omega$ be a rectangular domain with boundary $\partial\Omega$, and let $\mathscr{T}_h$ be a rectangular (or cuboidal) mesh generated by the tensor product of one-dimensional partitions in each coordinate direction, satisfying shape-regularity and quasi-uniformity. The mesh size is denoted by $h = \max_{K\in \mathscr{T}_h} h_K$, where $h_K$ is the diameter of the element $K$. 

Define the tensor-product finite element space by $\S_h^1\coloneqq\{ \v_h\in \H^1(\Omega):\v_h|_K\in Q_1(K)^3\ \forall K\in\mathscr{T}_h\}$ where $Q_1(K)$ is the space of polynomials of degree at most $1$ in each variable on $K$. Let \(\mathcal Q\) denote the set of all global finite element nodes. The mass lumping is achieved by using the $2$-point Gauss--Lobatto quadrature rule, which guarantees that the quadrature nodes coincide with the finite element nodes. This setting is closely related to the spectral element framework, where
tensor-product elements and Gauss--Lobatto quadrature are commonly used. For an element $K \in \mathscr{T}_h$, let \(\mathcal Q_K\) denote the set of its vertices and let $\omega_{K, \z}$ denote the corresponding quadrature weight at the quadrature point $\z\in \mathcal{Q}_{K}$. We denote by $I_h$ the global continuous $Q_1$ interpolation operator
associated with the global nodes, and by $I_K$ its local counterpart on
an element $K$.

We define the following discrete inner product for any $\v_h, \v'_h \in \S_h^1$:
\begin{equation}
(\v_h, \v'_h)_h = \sum_{K\in \mathscr{T}_h} \sum_{\z\in \mathcal{Q}_{K}}\omega_{K, \z}\v_h(\z)\cdot\v'_h(\z).
\end{equation} 
Since $I_h(\v_h \cdot\v'_h)(\z) = \v_h(\z) \cdot\v'_h(\z)$ for all $\z\in\mathcal{Q}_K$ and all $\v_h,\v'_h\in\S_h^1$, and since the $2$-point Gauss--Lobatto quadrature is exact for $Q_1$ functions, the discrete inner product admits the equivalent expression
\[
(\v_h, \v'_h)_h = \int_{\Omega} I_h(\v_h\cdot\v'_h)\ \d\x.
\]
Since the quadrature weights are strictly positive, we can define the following discrete $L^2$ norm on $\S_h^1$:
\[
\norm{\v_h}_h\coloneqq \norm{\v_h}_{L_h^2} = \sqrt{(\v_h,\v_h)_h}, \quad \forall \v_h\in \S_h^1.
\]
Similarly, the discrete $L^p$ norm $\norm{\cdot}_{L_h^p}$ is defined by
\begin{equation*}
  \norm{\v_h}_{L_h^p}\coloneqq\left\{
  \begin{aligned}
    &\l \sum_{K\in \mathscr{T}_h}\int_{K} I_h(|\v_h|^p)\ \d \x  \r^{\frac{1}{p}}&&\text{ for } \v_h\in \S_h^1, 1\le p<\infty,\\
    &\max_{\z\in \mathcal{Q}}|\v_h(\z)| &&\text{ for }\v_h\in \S_h^1, p=\infty. 
  \end{aligned}\right.
\end{equation*}
To define the discrete Laplacian operator \(\widetilde\Delta_h\), we
introduce a discrete inner product for elementwise
gradients:
\begin{align*}
&(\grad\v_h,\grad\v'_h)_{K,h} = \sum_{\z\in\mathcal Q_K}\omega_{K, \z}\grad(\v_h|_K)(\z)\cdot\grad(\v'_h|_K)(\z),
\end{align*}
where $\grad(\v_h|_K)$ denotes the gradient of the restriction of $\v_h$ on $K$ and $\cdot$ between two matrices is the Frobenius inner product. The discrete inner product for the stiffness term can also be equivalently expressed as
\begin{align*}
&(\grad \v_h, \grad \v'_h)_h = \sum_{K\in\mathscr T_h}(\grad\v_h,\grad\v'_h)_{K,h} = \sum_{K\in\mathscr T_h}\int_K I_K(\grad(\v_h|_K)\cdot\grad(\v'_h|_K))\ \d\x.
\end{align*}
The corresponding discrete broken gradient norm is defined as follows:
\begin{equation}\label{e6}
  \norm{\grad\v_h}_{L_h^p}\coloneqq\left\{
  \begin{aligned}
    &\l\sum_{K\in\mathscr T_h}\int_{K} I_K(|\grad(\v_h|_K)|^p)\ \d \x  \r^{\frac{1}{p}}&&\text{ for } \v_h\in \S_h^1, 1\le p<\infty,\\
    &\max_{K\in\mathscr T_h}\max_{\z\in\mathcal Q_K}|\grad(\v_h|_K)(\z)|&&\text{ for }\v_h\in \S_h^1, p=\infty.
  \end{aligned}\right.
\end{equation}
The above definitions are extended in the same way to elementwise smooth functions, which need not belong to $\S_h^1$ or be globally continuous. Moreover, the discrete $H^1$-norm can be defined as
\[
  \norm{\v_h}_{H^1_h}^2 = \norm{\v_h}_h^2+\norm{\grad\v_h}_h^2.
\]

By mapping each element to the reference rectangle/cuboid and using the equivalence of norms on the finite-dimensional tensor-product polynomial
space, one obtains the following equivalence between the standard and discrete norms:
\begin{align}
  C_1\norm{\v_h}_{L^p} &\le \norm{\v_h}_{L_h^p} \le C_2\norm{\v_h}_{L^p}, \label{b3}\\
  C_3\norm{\grad\v_h}_{L^p} &\le \norm{\grad\v_h}_{L_h^p} \le C_4\norm{\grad\v_h}_{L^p}, \label{p7}
\end{align}
for all $\v_h\in \S_h^1$ and $1\le p\le +\infty$, where the constants $C_1,\dots,C_4>0$ are independent of $h$ and \eqref{b3} can be found in \cite{MR4377027, MR4883780}.
Consequently, the standard finite element inverse inequalities and
interpolation inequalities remain valid in the corresponding discrete
norms. In particular, for \(1\le p\le\infty\),
\begin{align}
  &h\|\nabla\v_h\|_{L_h^p}
  \le C\|\v_h\|_{L_h^p},\quad\|\nabla\v_h\|_{L_h^4}\le C h^{-1-d/4}\|\v_h\|_h\label{e7}\\
  &\|\v_h\|_{L_h^4}\le C\|\v_h\|_h^{1/4}\|\v_h\|_{L_h^6}^{3/4}\le C\|\v_h\|_h^{1/4}\|\v_h\|_{H_h^1}^{3/4}.\label{t1}
\end{align}

\subsection{Elementwise edge differences}
Note that the global Cartesian tensor-product property guarantees certain relations between the FEM and the finite difference method in the present setting.
For later use, we introduce an elementwise edge-difference notation. Let
\[
  K=\prod_{j=1}^d[x_{K,j}^-,x_{K,j}^+],
  \qquad
  h_{K,j}=x_{K,j}^+-x_{K,j}^- .
\]
For $i\in\{1,\ldots,d\}$ and
$\alpha'\in\{0,1\}^{d-1}$, let
$\z_{K,i,\alpha'}^-$ and $\z_{K,i,\alpha'}^+$ be the two vertices of $K$
which differ only in the $i$-th coordinate. Define the local edge difference
\[
  \mathfrak D_{K,i}^{\alpha'}\v_h \coloneqq \frac{\v_h(\z_{K,i,\alpha'}^+)-\v_h(\z_{K,i,\alpha'}^-)}{h_{K,i}} .
\]
If $\v_h|_K\in Q_1(K)$, we have
\begin{align}\label{t2}
  \partial_i (\v_h|_K)(\z_{K,i,\alpha'}^-)
  =
  \partial_i (\v_h|_K)(\z_{K,i,\alpha'}^+)
  =
  \mathfrak D_{K,i}^{\alpha'}\v_h .
\end{align}
Since the quadrature weight $\omega_{K,\z}$ is the same for all nodes of a given element $K$, we denote it by $\omega_K$. The following identity then holds for all $\v_h\in \S_h^1$:
\begin{equation}\label{o1}
  \norm{\partial_i \v_h}_{L_h^2(K)}^2
  = 2\omega_{K} \sum_{\alpha'\in\{0,1\}^{d-1}} |\mathfrak D_{K,i}^{\alpha'}\v_h|^2.
\end{equation}
\subsection{The equivalent schemes}
The scheme \eqref{a2}--\eqref{a8} is the mass-lumped FEM discretization of the equivalent equation \eqref{t4}--\eqref{t5}.
We now show that it is also equivalent to the mass-lumped FEM discretization of the original equation \eqref{a7} combined with a correction term that guarantees property \eqref{c6}. We shall use the following edge-based cancellation identity; the proof can be found in Appendix \ref{t3}.
\begin{lemma}\label{y9}
  Assume that \(|\m_h^n(\z)|=1\) for all \(\z\in\mathcal Q\).
  For any \(\v_h\in\S_h^1\), let $
  \F_h:=\m_h^n(\m_h^n\cdot\v_h)$. Then we have
\begin{align}\label{t9}
  \l\grad \m_h^n,\grad(\F_h-I_h\F_h)\r_h
  =
  \l\m_h^n\cdot\grad\m_h^n,\grad(\m_h^n\cdot\v_h)\r_h .
\end{align}
\end{lemma}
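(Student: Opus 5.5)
The plan is to reduce \eqref{t9} to an identity on each single edge of each element, using the nodal structure of the broken quadrature form and the edge-difference relation \eqref{t2}. Write $\m=\m_h^n$ and $a=\m\cdot\v_h$. On each $K$, $\F_h|_K=\m(\m\cdot\v_h)$ is a polynomial, so $\grad(\F_h|_K)$ is defined pointwise. The form $(\cdot,\cdot)_{K,h}$ only uses values of elementwise gradients at the vertices, with the same weight $\omega_K$ at every vertex. It therefore splits into a sum over directions $i$ and over the $2^{d-1}$ edges $\{\z^-_{K,i,\alpha'},\z^+_{K,i,\alpha'}\}$ in direction $i$, since every vertex lies on exactly one such edge for each $i$. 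It then suffices to prove, for each $K$, $i$, $\alpha'$,
\[
\sum_{\pm}\partial_i\m(\z^\pm)\cdot\bigl(\partial_i(\F_h|_K)(\z^\pm)-\partial_i(I_K\F_h)(\z^\pm)\bigr)
=\sum_{\pm}\bigl(\m(\z^\pm)\cdot\partial_i\m(\z^\pm)\bigr)\,\partial_i(a|_K)(\z^\pm).
\]

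Next we evaluate each quantity on the edge. Set $\m^\pm=\m(\z^\pm)$, $a^\pm=\m^\pm\cdot\v_h(\z^\pm)$, $g^\pm=\partial_i(a|_K)(\z^\pm)$ and $\mathbf d=\mathfrak D^{\alpha'}_{K,i}\m$. By \eqref{t2}, $\partial_i\m(\z^\pm)=\mathbf d$. Since $I_K\F_h\in Q_1(K)^3$ interpolates $\F_h$ at the vertices, \eqref{t2} also gives
\[
\partial_i(I_K\F_h)(\z^\pm)=\frac{\m^+a^+-\m^-a^-}{h_{K,i}}.
\]
By the product rule, $\partial_i(\F_h|_K)(\z^\pm)=\mathbf d\,a^\pm+\m^\pm g^\pm$. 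The terms $\mathbf d\cdot\m^\pm g^\pm$ on the left coincide with the right-hand side. Hence the claim reduces to the scalar identity
\[
|\mathbf d|^2(a^++a^-)=\frac{2}{h_{K,i}}\,\mathbf d\cdot(\m^+a^+-\m^-a^-).
\]

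The key step, and the only place where the hypothesis $|\m_h^n(\z)|=1$ enters, is the following edge-based cancellation. From $|\m^+|=|\m^-|=1$ we get
\[
\mathbf d\cdot\m^+=\frac{1-\m^+\cdot\m^-}{h_{K,i}}=\frac{h_{K,i}}{2}|\mathbf d|^2,
\qquad
\mathbf d\cdot\m^-=-\frac{h_{K,i}}{2}|\mathbf d|^2,
\]
because $|\m^+-\m^-|^2=2-2\m^+\cdot\m^-$. Substituting these into the right-hand side of the scalar identity gives exactly $|\mathbf d|^2(a^++a^-)$, which proves the scalar identity.

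Summing over $\alpha'$, $i$ and $K$ then yields \eqref{t9}. I expect the main care to be bookkeeping rather than anything deep. One must check that the vertex sum in $(\cdot,\cdot)_{K,h}$ really decomposes edge by edge with equal weights; this uses the Cartesian tensor-product structure, where $\omega_{K,\z}=\omega_K$. One must also use elementwise gradients of the non-$Q_1$ products $\F_h$ and $\m\cdot\v_h$ consistently with the extension of the discrete forms to elementwise smooth functions.
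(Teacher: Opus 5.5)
Your proposal is correct and is essentially the paper's own argument in Appendix~\ref{t3}. Both proofs reduce \eqref{t9} to a single $x_i$-edge of each element using \eqref{t2} and the equal vertex weights $\omega_K$, expand $\partial_i\F_h$ by the product rule, and apply the unit-length identity $\m_h^n(\z_{K,i,\alpha'}^\pm)\cdot\mathfrak D_{K,i}^{\alpha'}\m_h^n=\pm\frac{h_{K,i}}{2}|\mathfrak D_{K,i}^{\alpha'}\m_h^n|^2$. That identity shows that $2\,\mathfrak D_{K,i}^{\alpha'}\m_h^n\cdot\mathfrak D_{K,i}^{\alpha'}\F_h$ cancels $|\mathfrak D_{K,i}^{\alpha'}\m_h^n|^2\bigl((\m_h^n\cdot\v_h)(\z_{K,i,\alpha'}^+)+(\m_h^n\cdot\v_h)(\z_{K,i,\alpha'}^-)\bigr)$, leaving exactly the right-hand side edge by edge.
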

Now, using \eqref{o6}, the definition of the discrete Laplacian operator $\tilde\Delta_h$ and Lemma \ref{y9}, \eqref{a2} can be rewritten as follows:
\begin{align}
&\l \frac{\tilde{\m}_h^{n+1} - \m_h^{n}}{\Delta t},\v_h \r_h+\l\grad \tilde{\m}_h^{n+1}, \grad\v_h\r_h
=- \l\tilde\Delta_h \tilde{\m}_h^{n+1},I_h(\m_h^n(\m_h^n\cdot\v_h))\r_h \notag\\
=&- \l\tilde\Delta_h (\tilde{\m}_h^{n+1} - \m_h^n),I_h(\m_h^n(\m_h^n\cdot\v_h))\r_h - \l\tilde\Delta_h \m_h^n,I_h(\m_h^n(\m_h^n\cdot\v_h))\r_h\notag\\
=&\l\grad (\tilde{\m}_h^{n+1} - \m_h^n),\grad I_h(\m_h^n(\m_h^n\cdot\v_h))\r_h+\l\grad \m_h^{n},\grad(\m_h^n(\m_h^n\cdot\v_h))\r_h\notag\\
&+\l\grad \m_h^{n},\grad\l I_h(\m_h^n(\m_h^n\cdot\v_h)) - \m_h^n(\m_h^n\cdot\v_h)\r\r_h\notag\\
=&\l|\grad\m_h^n|^2,\m_h^n\cdot\v_h\r_h + \l\grad (\tilde{\m}_h^{n+1} - \m_h^n),\grad I_h(\m_h^n(\m_h^n\cdot\v_h))\r_h.\notag
\end{align}
The second term on the right-hand side is the correction term. It is precisely this term that preserves the
nodal orthogonality relation \eqref{c6} in the equivalent formulation of
the original equation.
For the convenience of the subsequent analysis, we rewrite the above equation into the following form, which is easier to analyze. (One can also derive it directly from \eqref{a2}.)
\begin{align}\label{u2}
&\l \frac{\tilde{\m}_h^{n+1} - \m_h^{n}}{\Delta t},\v_h \r_h+\l\grad \tilde{\m}_h^{n+1}, \grad\v_h\r_h\notag\\
=&\l\grad (\tilde{\m}_h^{n+1} - \m_h^n),\grad\l I_h(\m_h^n(\m_h^n\cdot\v_h)) - \m_h^n(\m_h^n\cdot\v_h)\r\r_h\notag\\
&+\l\grad\m_h^n\cdot\grad \tilde{\m}_h^{n+1},\m_h^n\cdot\v_h\r_h+\l\m_h^n\cdot\grad (\tilde{\m}_h^{n+1} - \m_h^n),\grad(\m_h^n\cdot\v_h)\r_h\notag\\
=&\l\grad\m_h^n\cdot\grad \tilde{\m}_h^{n+1},\m_h^n\cdot\v_h\r_h+\l \grad((\tilde{\m}_h^{n+1} - \m_h^n)\cdot\m_h^n),\grad(\m_h^n\cdot\v_h)\r_h\notag\\
&-\l(\tilde{\m}_h^{n+1} - \m_h^n) \cdot\grad\m_h^n,\grad(\m_h^n\cdot\v_h)\r_h\notag\\
&+\l\grad (\tilde{\m}_h^{n+1} - \m_h^n),\grad( I_h(\m_h^n(\m_h^n\cdot\v_h)) - \m_h^n(\m_h^n\cdot\v_h))\r_h.
\end{align}

Additionally, we show that the scheme \eqref{a2}--\eqref{a8} is also equivalent to the mass-lumped FEM discretization of the normalized tangent plane scheme. Indeed, for a nodal unit vector field \(\m_h^n\), define
\[
T_h(\m_h^n)
:=
\{\w_h\in\mathbf S_h^1:
\w_h(\z)\cdot \m_h^n(\z)=0,\ \forall \z\in\mathcal Q\}.
\]
Let
\[
P_{\m_h^n}^{\top}\v_h
:=
\v_h-I_h\bigl(\m_h^n(\m_h^n\cdot\v_h)\bigr).
\]
Then \(P_{\m_h^n}^{\top}\) is self-adjoint with respect to
\((\cdot,\cdot)_h\), and it is the nodal projection onto \(T_h(\m_h^n)\).
Using the self-adjointness of \(P_{\m_h^n}^{\top}\), \eqref{c6} and \eqref{o6}, \eqref{a2} can be rewritten as follows:
\begin{align*}
&\l \frac{\tilde{\m}_h^{n+1} - \m_h^{n}}{\Delta t},P_{\m_h^n}^\top\v_h \r_h = \l P_{\m_h^n}^\top\frac{\tilde{\m}_h^{n+1} - \m_h^{n}}{\Delta t},\v_h \r_h =\l \frac{\tilde{\m}_h^{n+1} - \m_h^{n}}{\Delta t},\v_h \r_h \\
&=\l\tilde\Delta_h \tilde{\m}_h^{n+1},\v_h - I_h(\m_h^n(\m_h^n\cdot\v_h))\r_h =  \l\tilde\Delta_h \tilde{\m}_h^{n+1},P_{\m_h^n}^\top\v_h\r_h.
\end{align*} 
Using the nodal orthogonality \eqref{c6}, we have $\u_h^{n+1}\in T_h(\m_h^n)$ for $\u_h^{n+1}\coloneqq\frac{\widetilde{\m}_h^{n+1}-\m_h^n}{\Delta t}$.
Then the scheme \eqref{a2}--\eqref{a8} is equivalent to the following tangent plane scheme: Find $\u_h^{n+1}\in T_h(\m_h^n)$ such that for all $\w_h\in T_h(\m_h^n)$, there holds
\begin{align*}
\l& \u_h^{n+1},\w_h \r_h + \Delta t\l\grad \u_h^{n+1},\grad\w_h\r_h+ \l\grad\m_h^n,\grad\w_h\r_h = 0,\\
&\m_h^{n+1} = I_h\l\frac{\m_h^n+\Delta t\u_h^{n+1}}{|\m_h^n+\Delta t\u_h^{n+1}|}\r.
\end{align*}
Thus, the present analysis also provides results for the normalized
tangent plane scheme, which is widely used for the numerical
approximation of the harmonic map heat flow and the
Landau--Lifshitz--Gilbert equation.

\section{Main results}\label{sec3}
Our first result is the unconditional energy decay for scheme \eqref{a2}--\eqref{a8}. As mentioned before, for a projection-based scheme to yield energy decay, the stability of the projection step is needed.
The stability results for the projection step established in
\cite{MR2177142, MR3454357} concern piecewise affine finite elements on
simplicial meshes and rely essentially on the fact that their gradients
are constant on each element. This property, however, does not carry over to tensor-product $Q_1$ elements since the gradient of a $Q_1$ function is still a polynomial within each element. Consequently, the Lipschitz continuity of the normalization mapping $\{\s\in \mathbb{R}^3:|\s|\ge 1\}\to \mathbb{R}^3, \s\mapsto \s/|\s|$ cannot be directly applied to control the Dirichlet energy of the projected function. To circumvent this difficulty, we resort to the discrete energy norm using \eqref{e6}. Although the main analysis of this work is carried out on Cartesian rectangular/cuboidal tensor-product meshes, the following projection stability estimate is established on a slightly more general class of meshes.
\begin{lemma}[Stability of the nodal projection]
Let $\mathcal T_h$ be a shape-regular quadrilateral or hexahedral mesh.
Then there exists a constant $C_{\mathcal T_h}>0$ independent of $h$, such that for all $\tilde\v_h\in \S_h^1$ with $|\tilde\v_h(\z)|\ge1$ for all $\z\in \mathcal{Q}$, we have
  \begin{align*}
    &\norm{\grad \v_h}_{h}^2\le C_{\mathcal{T}_h}\norm{\grad \tilde\v_h}_h^2,
  \end{align*}
  where $\v_h = I_h\l \dfrac{\tilde\v_h}{|\tilde\v_h|} \r$. 
  In particular, if $\mathcal{T}_h$ is a Cartesian rectangular/cuboidal tensor-product mesh, then
\begin{equation}
    \norm{\grad \v_h}_h^2\le\norm{\grad \tilde\v_h}_h^2.\label{e8}
  \end{equation}
\end{lemma}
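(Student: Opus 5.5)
The key tool is the Lipschitz property of the radial retraction onto the closed unit ball: for $\a,\b\in\R^3$ with $|\a|,|\b|\ge1$,
\[
\Bigl|\frac{\a}{|\a|}-\frac{\b}{|\b|}\Bigr|\le|\a-b|.
\]
With it, the proof becomes a comparison of nodal differences, elementwise.

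\textbf{Cartesian case.} On a rectangular/cuboidal element $K$, the discrete broken gradient energy is exactly a weighted sum of squared edge differences, by \eqref{o1}:
\[
\norm{\grad\v_h}_{L_h^2(K)}^2=\sum_{i=1}^d\norm{\partial_i\v_h}_{L_h^2(K)}^2=2\omega_K\sum_{i=1}^d\sum_{\alpha'\in\{0,1\}^{d-1}}|\mathfrak D_{K,i}^{\alpha'}\v_h|^2 .
\]
Since $\v_h=I_h(\tilde\v_h/|\tilde\v_h|)$, the nodal values are $\v_h(\z)=\tilde\v_h(\z)/|\tilde\v_h(\z)|$. Applying the retraction estimate to $\a=\tilde\v_h(\z^+_{K,i,\alpha'})$ and $\b=\tilde\v_h(\z^-_{K,i,\alpha'})$, which is permitted because $|\tilde\v_h(\z)|\ge1$, gives
\[
|\mathfrak D_{K,i}^{\alpha'}\v_h|\le|\mathfrak D_{K,i}^{\alpha'}\tilde\v_h|
\]
for every edge. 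Summing with the positive weights $2\omega_K$ over $i$, $\alpha'$ and $K$ yields \eqref{e8} directly. The essential point is that the quadrature-based stiffness form turns the Dirichlet energy into a sum over edges of squared nodal differences, with no cross terms. This is exactly why the exact gradient $L^2$ norm would fail: it contains mixed products of differences along parallel edges.

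\textbf{General quadrilateral/hexahedral case.} On a general element, $K=F_K(\hat K)$ with a multilinear map $F_K$. The quantity $\norm{\grad\v_h}_{L_h^2(K)}^2$ is a quadratic form in the nodal values that vanishes on constants, so it depends only on the nodal difference vector $\delta_K\v_h=(\v_h(\z)-\v_h(\z_0))_{\z\in\mathcal Q_K}$. The plan has four steps.
\begin{enumerate}
\item Show that this form is equivalent, with constants depending only on shape regularity, to $h_K^{d-2}\sum_{\text{edges }[\z,\z']\subset K}|\v_h(\z)-\v_h(\z')|^2$. This follows by pulling back to $\hat K$, using the bounds on $DF_K$ and $(DF_K)^{-1}$ at the vertices, and using the fact that on $Q_1(\hat K)$ modulo constants both expressions are norms on a finite-dimensional space.
\item Apply the retraction estimate edge by edge to get
\[
\sum_{\text{edges}}|\v_h(\z)-\v_h(\z')|^2\le\sum_{\text{edges}}|\tilde\v_h(\z)-\tilde\v_h(\z')|^2 .
\]
\item Combine the two equivalences to obtain the elementwise bound with a constant $C_{\mathcal T_h}$.
\item Sum over $K$.
\end{enumerate}

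\textbf{Main obstacle.} The delicate step is the uniform norm equivalence in step 1 for non-affine elements. It requires that the Gauss--Lobatto quadrature of $|\grad\hat\v|^2$ pulled back through $F_K$ remains positive definite on non-constant $Q_1$ functions. This holds because $\grad(\hat\v)$ evaluated at the $2^d$ vertices of $\hat K$ determines $\hat\v$ up to a constant, since the vertex derivatives reproduce all edge differences via \eqref{t2}. Positive definiteness together with shape regularity, which compactifies the set of admissible $F_K$, then gives uniform constants.
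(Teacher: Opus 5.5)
Your proposal is correct and follows essentially the paper's proof. The paper likewise applies the $1$-Lipschitz property of $\s\mapsto\s/|\s|$ on $\{|\s|\ge1\}$ to edge differences, which on $\hat K$ coincide with the vertex partial derivatives by \eqref{t2}. For general meshes it combines this with two-sided vertex Jacobian bounds giving $\|\grad\w_h\|_{L_h^2(K)}^2\simeq h_K^{d-2}\|\grad\hat\w_h\|_{L_h^2(\hat K)}^2$. That is your step 1; since the right-hand side is exactly a weighted edge-difference sum, your compactness argument is not needed. In the Cartesian case, your direct use of \eqref{o1} is the alternative the paper points out in the remark after the lemma, while its main proof goes through $\hat K$ with the constant diagonal Jacobian $J_K=\operatorname{diag}(h_{K,1},\ldots,h_{K,d})$, which amounts to the same computation.
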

\begin{proof}
We prove the estimate element by element. Let $K\in\mathcal T_h$ be fixed.
For the reference element $\hat K=[0,1]^d$, let
$\Phi_K:\hat K\to K$ be the geometric mapping with Jacobian matrix
$J_K$. For any finite element function $\w_h\in\S_h^1$, set
\[
  \hat\w_h=(\w_h|_K)\circ\Phi_K .
\]

We first prove the stability of the nodal projection on the reference
element. On the reference element $\hat{K}$, we have
\[
  \partial_{i}\hat\v_h
  (\z_{\hat K,i,\alpha'}^\pm) = \mathfrak D_{\hat K,i}^{\alpha'}\hat\v_h = \frac{\hat\v_h(\z_{\hat K,i,\alpha'}^+) -
  \hat\v_h(\z_{\hat K,i,\alpha'}^-)}{h_{K,i}} = \hat\v_h(\z_{\hat K,i,\alpha'}^+) - \hat\v_h(\z_{\hat K,i,\alpha'}^-).
\]
Since
$\hat\v_h(\z)=\l I_h\frac{\hat{\tilde\v}_h}{|\hat{\tilde\v}_h|}\r(\z) = \frac{\hat{\tilde\v}_h(\z)}{|\hat{\tilde\v}_h(\z)|}$ for all $\hat\z\in\mathcal Q_{\hat K}$, we have by the Lipschitz continuity of the normalization mapping $\{\s\in \mathbb{R}^3:|\s|\ge 1\}\to \mathbb{R}^3, \s\mapsto \s/|\s|$
\begin{align}\label{o8a}
  \left|
   \partial_{i}\hat\v_h
  (\z_{\hat K,i,\alpha'}^\pm)
  \right|
  &=
  \left|
  \frac{\hat{\tilde\v}_h(\z_{\hat K,i,\alpha'}^+)}{|\hat{\tilde\v}_h(\z_{\hat K,i,\alpha'}^+)|}
  -
  \frac{\hat{\tilde\v}_h(\z_{\hat K,i,\alpha'}^-)}{|\hat{\tilde\v}_h(\z_{\hat K,i,\alpha'}^-)|}
  \right| \notag\\
  &\le
  \left|
  \hat{\tilde\v}_h(\z_{\hat K,i,\alpha'}^+)
  -
  \hat{\tilde\v}_h(\z_{\hat K,i,\alpha'}^-)
  \right|
  = \left|
  \partial_{i}\hat{\tilde\v}_h
  (\z_{\hat K,i,\alpha'}^\pm)
  \right|.
\end{align}
Squaring both sides, multiplying by the quadrature weights, and summing over all $i$-edges and all directions $i$, we have
\begin{align}
  \norm{\grad\hat\v_h}_{L_h^2(\hat K)}^2\le \norm{\grad\hat{\tilde\v}_h}_{L_h^2(\hat K)}^2 .
  \label{o8b}
\end{align}

We next derive the transformation of the discrete energy norm under the
geometric mapping $\Phi_K$. For any function $\w_h\in\S_h^1$, by the definition of the broken discrete gradient norm, we have
\[
\begin{aligned}
  \norm{\grad \w_h}_{L_h^2(K)}^2&=
  \sum_{\z\in\mathcal Q_{K}}
  \omega_{K,\z}
  \left|
  \grad(\w_h|_K)(\z)
  \right|^2  \\
  &=
  \sum_{\z\in\mathcal Q_{\hat K}}
  \omega_{\hat K,\z}
  |\det J_K(\z)|
  \left|
  J_K(\z)^{-\top}\grad\hat\w_h(\z)
  \right|^2  \\
  &\le
  \norm{\det J_K}_{L^\infty(\hat K)}
  \norm{J_K^{-\top}}_{L^\infty(\hat K)}^2
  \norm{\grad\hat\w_h}_{L_h^2(\hat K)}^2 .
\end{aligned}
\]
By the shape-regularity of the family of quadrilateral or hexahedral
meshes, there exists a geometry constant $\sigma_\sharp>0$, independent
of $h$, such that \cite{MR4242224}
\begin{align*}
  &\norm{J_K}_{L^\infty(\hat K)}
  \le C\sigma_\sharp h_K,
  \qquad
  \norm{J_K^{-1}}_{L^\infty(\hat K)}
  \le C\sigma_\sharp h_K^{-1},\\
  &\norm{\det J_K}_{L^\infty(\hat K)}
  \le C\sigma_\sharp h_K^d,
  \qquad
  \norm{\det (J_K^{-1})}_{L^\infty(\hat K)}
  \le C\sigma_\sharp h_K^{-d}.
\end{align*}
Consequently,
\[
  \norm{\grad \w_h}_{L_h^2(K)}^2
  \le
  C h_K^{d-2}
  \norm{\grad\hat\w_h}_{L_h^2(\hat K)}^2 .
\]
We similarly obtain
\[
\begin{aligned}
  &\norm{\grad\hat\w_h}_{L_h^2(\hat K)}^2
  =
  \sum_{\z\in\mathcal Q_{K}}
  \omega_{K,\z}
  |\det (J_K^{-1})(\z)|
  \left|
  J_K(\z)^{\top}\grad(\w_h|_K)(\z)
  \right|^2  \\
  &\le
  \norm{\det (J_K^{-1})}_{L^\infty(\hat K)}\norm{J_K^\top}_{L^\infty(\hat K)}^2
  \norm{\grad(\w_h|_K)}_{L_h^2(K)}^2\le
  C h_K^{2-d}
  \norm{\grad\w_h}_{L_h^2(K)}^2 .
\end{aligned}
\]
Hence there exist constants $c_0,c_1>0$ independent of $h_{K}$, such that
\[
  c_0 h_K^{d-2}
  \norm{\grad\hat\w_h}_{L_h^2(\hat K)}^2
  \le
  \norm{\grad\w_h}_{L_h^2(K)}^2
  \le
  c_1 h_K^{d-2}
  \norm{\grad\hat\w_h}_{L_h^2(\hat K)}^2 .
\]

Combining this equivalence with \eqref{o8b}, we obtain
\begin{align*}
  \norm{\grad \v_h}_{L_h^2(K)}^2
  \le
  c_1 h_K^{d-2}
  \norm{\grad\hat\v_h}_{L_h^2(\hat K)}^2 \le
  c_1 h_K^{d-2}
  \norm{\grad\hat{\tilde\v}_h}_{L_h^2(\hat K)}^2 \le
  \frac{c_1}{c_0}
  \norm{\grad\tilde\v_h}_{L_h^2(K)}^2 .
\end{align*}
Thus,
\[
  \norm{\grad \v_h}_{L_h^2(K)}^2
  \le
  C_K
  \norm{\grad \tilde\v_h}_{L_h^2(K)}^2,
\]
where $C_K$ depends only on the geometry of the mapping $\Phi_K$, but is
independent of $h$. Summing over all elements gives
\[
  \norm{\grad \v_h}_{L_h^2}^2
  \le
  C_{\mathcal T_h}
  \norm{\grad \tilde\v_h}_{L_h^2}^2 .
\]

If $\mathcal T_h$ consists of Cartesian rectangles or cuboids, i.e.,
\[
  K=\prod_{i=1}^d[x_{K,i}^-,x_{K,i}^+],
  \qquad
  h_{K,i}=x_{K,i}^+-x_{K,i}^-,
\]
then the mapping $\Phi_K$ is affine with a constant diagonal Jacobian. Specifically, 
\[
  \Phi_K(\hat\x)=(x_{K,1}^-,\ldots,x_{K,d}^-)^\top +J_K\hat\x,
  \qquad
  J_K=\operatorname{diag}(h_{K,1},\ldots,h_{K,d}).
\]
Hence
\[
  J_K^{-\top}
  =
  \operatorname{diag}(h_{K,1}^{-1},\ldots,h_{K,d}^{-1}),
  \qquad
  |\det J_K|
  =
  \prod_{i=1}^d h_{K,i}.
\]
Using the directional pointwise estimate \eqref{o8a}, we get
\begin{align*}
  \norm{\grad\v_h}_{L_h^2(K)}^2
  &=
  \sum_{\z\in\mathcal Q_{\hat K}}
  \omega_{\hat K,\z}
  |\det J_K|
  \sum_{i=1}^d
  h_{K,i}^{-2}
  \left|
  \partial_i\hat\v_h(\z)
  \right|^2 \\
  &\le
  \sum_{\z\in\mathcal Q_{\hat K}}
  \omega_{\hat K,\z}
  |\det J_K|
  \sum_{i=1}^d
  h_{K,i}^{-2}
  \left|
  \partial_i\hat{\tilde\v}_h(\hat\z)
  \right|^2 =
  \norm{\grad\tilde\v_h}_{L_h^2(K)}^2 .
\end{align*}
Summing over all rectangular or cuboidal elements gives \eqref{e8}.
\end{proof}
  \begin{remark}
    The key reason why \(C_{\mathcal T_h}=1\) is that the Jacobian \(J_K\)
    is constant and diagonal, so the geometric transformation factors cancel
    exactly. More generally, the same argument applies whenever
    \(J_K=Q_KD_K\), with \(Q_K^\top Q_K=I\) and \(D_K\) diagonal with positive
    entries; this includes rotated rectangles and cuboids. One can also prove \eqref{e8} by the edge-difference representation \eqref{o1} to avoid the reference element altogether. Another proof in the context of finite differences can be found in \cite{li2026stabilityerroranalysisfully}.
  \end{remark}

We next prove the unique solvability and unconditional discrete energy decay of the proposed scheme.
  \begin{theorem}[Unique solvability and discrete energy decay]
  The numerical scheme \eqref{a2}--\eqref{a8} has a unique solution and satisfies the following stability estimate:
    \begin{align}
      &\frac{1}{2}\left\| \grad \m_h^{n+1} \right\|_h^2 + \Delta t\sum_{k=0}^{n}\norm{\m_h^k\times\tilde\Delta_h\tilde\m_h^{k+1}}_h^2\le \frac{1}{2}\left\| \grad \m_h^{0} \right\|_h^2.\label{e3}
    \end{align}
  \end{theorem}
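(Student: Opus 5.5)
We argue by induction on $n$, carrying the hypothesis $|\m_h^n(\z)|=1$ for all $\z\in\mathcal Q$. This holds for $n=0$ since $\m_h^0=I_h\m^0$ and $|\m^0|=1$ at the nodes.

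\emph{Unique solvability.} The predictor \eqref{a2} is a square linear system in $\tilde\m_h^{n+1}\in\S_h^1$, so it suffices to show that the homogeneous problem has only the trivial solution. Suppose
\[
(\w_h,\v_h)_h=-\Delta t\,\bigl(\m_h^n\times(\m_h^n\times\tilde\Delta_h\w_h),\v_h\bigr)_h \quad \text{for all } \v_h\in\S_h^1 .
\]
Since the discrete inner product is nodal, this is a nodal identity. Taking $\v_h=-\tilde\Delta_h\w_h$, the right-hand side equals
\[
\Delta t\,\bigl(\m_h^n\times(\m_h^n\times\tilde\Delta_h\w_h),\tilde\Delta_h\w_h\bigr)_h=-\Delta t\norm{\m_h^n\times\tilde\Delta_h\w_h}_h^2 ,
\]
where we use $(\a\times(\a\times\b))\cdot\b=-|\a\times\b|^2$ nodewise. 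The left-hand side equals $-(\w_h,\tilde\Delta_h\w_h)_h=\norm{\grad\w_h}_h^2$. Hence $\norm{\grad\w_h}_h^2+\Delta t\norm{\m_h^n\times\tilde\Delta_h\w_h}_h^2=0$, so $\w_h$ is constant. Once $\grad\w_h=0$ we have $\tilde\Delta_h\w_h=0$, and the homogeneous equation then gives $(\w_h,\v_h)_h=0$ for all $\v_h$, i.e.\ $\w_h=0$.

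\emph{Well-definedness of the projection and the induction step.} Testing \eqref{a2} with $\v_h=\phi_\z\m_h^n(\z)$ gives the nodal orthogonality \eqref{c6}. Therefore
\[
|\tilde\m_h^{n+1}(\z)|^2=1+|\tilde\m_h^{n+1}(\z)-\m_h^n(\z)|^2\ge1 ,
\]
so \eqref{a8} is well defined and $|\m_h^{n+1}(\z)|=1$, which closes the induction.

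\emph{Energy estimate.} Test \eqref{a2} with $\v_h=-\tilde\Delta_h\tilde\m_h^{n+1}\in\S_h^1$. The left-hand side becomes $\frac1{\Delta t}(\grad(\tilde\m_h^{n+1}-\m_h^n),\grad\tilde\m_h^{n+1})_h$. By the same nodal vector identity as above, the right-hand side is $-\norm{\m_h^n\times\tilde\Delta_h\tilde\m_h^{n+1}}_h^2$. The polarization identity $2(\a-\b)\cdot\a=|\a|^2-|\b|^2+|\a-\b|^2$ holds for the symmetric positive semidefinite form $(\grad\cdot,\grad\cdot)_h$, so
\[
\frac12\norm{\grad\tilde\m_h^{n+1}}_h^2-\frac12\norm{\grad\m_h^n}_h^2+\frac12\norm{\grad(\tilde\m_h^{n+1}-\m_h^n)}_h^2+\Delta t\norm{\m_h^n\times\tilde\Delta_h\tilde\m_h^{n+1}}_h^2=0 .
\]
We drop the nonnegative third term. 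We then apply the projection stability \eqref{e8} from the preceding lemma, which is valid because $|\tilde\m_h^{n+1}(\z)|\ge1$ at every node on the Cartesian mesh; it gives $\norm{\grad\m_h^{n+1}}_h^2\le\norm{\grad\tilde\m_h^{n+1}}_h^2$. Summing over $k=0,\dots,n$ telescopes to \eqref{e3}.

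The only delicate point is to ensure that every manipulation is nodal. The test function $\tilde\Delta_h\tilde\m_h^{n+1}$ must lie in $\S_h^1$, which holds by its definition through $(\cdot,\cdot)_h$. The cross-product identity must also be applied nodewise inside $(\cdot,\cdot)_h$, and this is exactly what the mass-lumped inner product on both terms permits. With a consistent (non-lumped) inner product this step would fail.
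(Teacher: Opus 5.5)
Your proof is correct and follows the paper's argument: uniqueness by testing the homogeneous system with $-\tilde\Delta_h\w_h$, the predictor energy identity by testing \eqref{a2} with $-\tilde\Delta_h\tilde\m_h^{n+1}$ and using $(\a\times(\a\times\b))\cdot\b=-|\a\times\b|^2$, then the projection nonexpansiveness \eqref{e8} and telescoping; your explicit induction on $|\m_h^n(\z)|=1$ and the resulting well-definedness of the projection, which the paper only discusses in the introduction, is a useful addition. One caveat concerns only your closing remark: the sign of the cross-product term would also survive with the exact $L^2$ inner product, because the integrand $-|\m_h^n\times\tilde\Delta_h\tilde\m_h^{n+1}|^2$ is pointwise nonpositive; what genuinely needs the quadrature-based forms is the nodal orthogonality \eqref{c6} (hence $|\tilde\m_h^{n+1}(\z)|\ge1$, required for \eqref{e8}) and the fact that \eqref{e8} holds in the broken quadrature seminorm $\norm{\grad(\cdot)}_h$.
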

  \begin{proof}
    Since the scheme is a finite-dimensional linear system, it suffices to show that the associated homogeneous problem has only the trivial solution. The corresponding homogeneous system of \eqref{a2} is
  \begin{align}
  \l \frac{\tilde{\m}_h^{n+1}}{\Delta t},\v_h \r_h=& -\l\m_h^n\times\l\m_h^n\times\tilde\Delta_h \tilde{\m}_h^{n+1}\r,\v_h\r_h.
  \end{align}
  Taking $\v_h = -\tilde\Delta_h \tilde{\m}_h^{n+1}$, we obtain
  \begin{align*}
  &\frac{\norm{\grad\tilde{\m}_h^{n+1}}_h^2}{\Delta t}+\norm{\m_h^n\times\tilde\Delta_h \tilde{\m}_h^{n+1}}_h^2=0,
  \end{align*}
  which means that $\tilde\m_h^{n+1}$ is constant, hence $\tilde\Delta_h \tilde{\m}_h^{n+1} = 0$, and thus $(\tilde\m_h^{n+1}, \v_h)_h = 0$ holds for all $\v_h\in\S_h^1$. Taking $\v_h = \tilde\m_h^{n+1}$, we obtain the associated homogeneous problem has only the trivial solution.

  Substituting $\v_h = -\tilde\Delta_h\tilde\m_h^{n+1}$ into \eqref{a2}, we obtain
  \begin{align*}
  &\frac{1}{2\Delta t}\l\norm{\grad \tilde{\m}_h^{n+1}}_h^2 - \norm{\grad \m_h^n}_h^2\r + \norm{\m_h^n\times\tilde\Delta_h\tilde\m_h^{n+1}}_h^2 \le 0.
  \end{align*}
  Combining this estimate with \eqref{e8}, we obtain
  \begin{align*}
  &\frac{1}{2\Delta t}\l\norm{\grad {\m}_h^{n+1}}_h^2 - \norm{\grad \m_h^n}_h^2\r + \norm{\m_h^n\times\tilde\Delta_h\tilde\m_h^{n+1}}_h^2 \le 0,
  \end{align*}
  Summing the resulting inequality over \(k=0,1,\cdots,n\) gives \eqref{e3}.
  \end{proof}
We now present the unconditional optimal-order convergence rates of our scheme.
\begin{theorem}\label{th1}
Suppose that the initial-boundary value problem \eqref{a7}--\eqref{c4} admits a unique solution $\m$ with the following regularity assumptions:
\begin{align*}
  &\m^0\in \mathbf{W}^{2,4},\ \m\in L^\infty(0,T;\mathbf{W}^{2,4}\cap\H^3),\ \m_t\in L^\infty(0,T;\mathbf{H}^{2}), \ \m_{tt}\in L^\infty(0,T;\mathbf{L}^{2}).
\end{align*}
Then there exist two constants $0<h_0, \Delta t_0<1$ such that when $\Delta t\le\Delta t_0$, $h\le h_0$, the numerical scheme \eqref{a2}--\eqref{a8} yields a unique solution $\m_h^{l}\in \S_h^1$, $l=1,\cdots,N$, with the following error bounds
\begin{align*}
  &\max_{1\le l\le N} \norm{\m_h^{l}-\m(t_l)}^2\le  C(\Delta t^2+h^{4}),\quad \Delta t\sum_{l=1}^{N}\norm{\m_h^l - \m(t_l)}_{H^1}^2\le C(\Delta t^2+h^{2}),
\end{align*}
where $C$ is a constant independent of $\Delta t$ and $h$.
\end{theorem}
\section{Proof of Theorem \ref{th1}}\label{sec4} 
In this section, we present the proof of Theorem \ref{th1}. We first present some technical lemmas for the later estimates, then give a bound for the truncation error. 
The main proof is based on a bootstrap induction. We first obtain the
optimal error estimate for the auxiliary solution from the error equation,
the projected-error contraction, and the discrete Gronwall inequality.
The remaining bootstrap bound concerns the discrete Laplacian of the
auxiliary error. It is first estimated from the error equation in dual
form, using the already established error bound, the discrete Sobolev
inequalities, and Lemma~\ref{lemma8}. A complementary estimate is then
obtained from the inverse inequality and the \(L^2\)-error bound. Taking
the minimum of these two estimates closes the bootstrap uniformly and
avoids any coupling condition between the time step and the mesh size.  Once the induction is closed, the desired estimates for
the projected solution follow from the contraction property and standard
interpolation estimates.

\subsection{Some useful lemmas}
The following superconvergence result will be used to bound the truncation error, the proof is provided in Appendix \ref{u6}.
\begin{lemma}\label{lemma11} 
  Let \(\m\) be the exact solution of \eqref{a7}--\eqref{c4} satisfying
the regularity assumptions of Theorem~\ref{th1}, and let
\(\v_h\in\S_h^1\). Then the following superconvergence result holds for the Lagrange interpolation operator $I_h$
  \[ 
  \left| \l \grad I_h\m(t_{n+1}) , \grad \v_h\r_h-\l \grad \m(t_{n+1}), \grad \v_h \r\right| \le C h^2\norm{\v_h}_{H^1}. 
  \]
\end{lemma}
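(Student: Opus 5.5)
We split the difference into two pieces,
\[
(\grad I_h\m,\grad\v_h)_h-(\grad\m,\grad\v_h)
=\bigl[(\grad I_h\m,\grad\v_h)_h-(\grad I_h\m,\grad\v_h)\bigr]+(\grad(I_h\m-\m),\grad\v_h),
\]
and estimate each one element by element on the Cartesian mesh, writing $\m=\m(t_{n+1})$. The second term is the classical superconvergence of $Q_1$ interpolation on rectangles. Componentwise on $K$,
\[
\int_K\partial_i(I_K\m-\m)\,\partial_i\v_h\,\d\x,
\]
where $\partial_i\v_h$ is linear in the variables $x_j$, $j\ne i$, and constant in $x_i$. We expand $\partial_i\v_h$ about the element center. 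The constant part is handled by the exactness of the one-dimensional interpolant along $x_i$-lines: $\int (I\m-\m)_{x_i}\,dx_i=0$ on each edge, combined with the tensor-product (Bramble--Hilbert/Lin's identity) argument. This gives $Ch_K^2|\m|_{H^3(K)}\norm{\partial_i\v_h}_{L^2(K)}$. The linear part carries an extra factor $h_K$ times $\norm{\partial_j\partial_i\v_h}_{L^2(K)}$, and with the inverse inequality $h\norm{\partial_{ij}\v_h}\le C\norm{\grad\v_h}$ it is again $O(h^2)$. Summing over $K$ gives $Ch^2\norm{\m}_3\norm{\v_h}_1$. The regularity assumption $\m\in L^\infty(0,T;\H^3)$ is exactly what this step needs.

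For the first term, the quadrature error on $K$ is
\[
E_K(\mathbf f)=\int_K(I_K\mathbf f-\mathbf f)\,\d\x,\qquad \mathbf f=\partial_i I_K\m\cdot\partial_i\v_h .
\]
Here $\partial_i I_K\m$ and $\partial_i\v_h$ are both $Q_1$ polynomials that are constant in $x_i$. Their product therefore has degree $\le2$ in each $x_j$, $j\ne i$, and degree $0$ in $x_i$. The trapezoidal rule in $x_j$ integrates linear functions exactly, so the error comes only from the $x_j^2$ parts:
\[
E_K(\mathbf f)=c\sum_{j\ne i}h_{K,j}^2\int_K\partial_j^2\mathbf f\,\d\x,\qquad
\partial_j^2\mathbf f=2\,\partial_j\partial_i I_K\m\cdot\partial_j\partial_i\v_h,
\]
since each factor is linear in $x_j$. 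This is the step I expect to be the main obstacle: a naive bound $h^2\norm{\partial_{ij}I_K\m}\norm{\partial_{ij}\v_h}$ combined with the inverse inequality only yields $O(h)$.

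To fix this, I would use the fact that $\partial_j\partial_i I_K\m$ is a constant on $K$ and write
\[
\int_K\partial_{ij}\v_h\,\d\x=\int_{\partial K}\partial_i\v_h\,n_j\,\d s,
\]
that is, a difference of edge integrals. The constant $\partial_{ij}I_K\m$ is close to $\partial_{ij}\m$ up to $O(h_K)|\m|_{W^{3,\cdot}(K)}$. Replacing it by a smooth field $\partial_{ij}\m$ (or by its average), summing over elements, and integrating by parts back makes the interior face contributions telescope across neighboring elements of the tensor-product mesh. The boundary faces are removed using $\partial_{\mathbf n}\m=\mathbf 0$, whose tangential derivatives $\partial_{ij}\m\,n_j$ vanish on the flat boundary of the rectangular domain. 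What remains is
\[
h^2\int_\Omega\partial_i\partial_j^2\m\cdot\partial_i\v_h\,\d\x
\]
plus local terms of size $h^2\cdot h|\m|_3\cdot h^{-1}\norm{\grad\v_h}_{L^2(K)}$. Both are $O(h^2)\norm{\m}_3\norm{\v_h}_1$.

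If this summation-by-parts route proves delicate, the fallback is a simpler local argument. The factor $h_{K,j}^2\,\partial_{ij}I_K\m\cdot\partial_{ij}\v_h$ pairs $O(1)$ with $O(h^{-1})\norm{\grad\v_h}_{L^2(K)}$, which falls one order short. I would therefore insist on the edge/telescoping identity, which is the same edge-difference mechanism the paper relies on elsewhere (compare \eqref{o1} and Lemma~\ref{y9}). Finally, combining the two terms and using $\norm{\m(t_{n+1})}_{3}\le C$ gives the claim.
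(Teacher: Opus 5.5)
Your plan has the same structure as the paper's proof. You use the same splitting, the classical $Q_1$ superconvergence for $(\grad(I_h\m-\m),\grad\v_h)$ (which the paper simply cites), and the exact one-dimensional trapezoidal error giving $\sum_{j\ne i}\frac{h_{K,j}^2}{6}\int_K\partial_{ij}I_h\m\cdot\partial_{ij}\v_h\,\d\x$. You then replace $\partial_{ij}I_h\m$ by $\partial_{ij}\m$ at local cost $Ch_K\norm{\m}_{H^3(K)}$, and integrate by parts elementwise so that interior faces cancel and boundary faces vanish by the Neumann condition.

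The step that fails as written is the \emph{direction} of that integration by parts. You integrate in $x_j$, as your identity $\int_K\partial_{ij}\v_h\,\d\x=\int_{\partial K}\partial_i\v_h\,n_j$ and your remainder $\partial_i\partial_j^2\m\cdot\partial_i\v_h$ show. The face terms then sit on faces orthogonal to $x_j$ and carry the weight $h_{K,j}^2$. Two elements sharing such a face are neighbours in the $x_j$-direction. On the paper's meshes, which are tensor products of possibly non-uniform one-dimensional partitions (compare $h_i^+\neq h_i^-$ in Appendix~\ref{u4}), their $x_j$-lengths differ, so nothing telescopes. You are left with $\sum_F\frac16\bigl(h_{K_F^-,j}^2-h_{K_F^+,j}^2\bigr)\int_F\partial_{ij}\m\cdot\partial_i\v_h\,\d S$. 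These are $O(h^2)$ weights on $O(h^{-1})$ layers of faces, and trace plus inverse estimates bound them only by $O(h)\norm{\grad\v_h}$. Your global prefactor $h^2$ in front of $\int_\Omega\partial_i\partial_j^2\m\cdot\partial_i\v_h$ tacitly assumes a uniform mesh, where your version would indeed work.

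The fix is to integrate by parts in $x_i$, as the paper does. Only faces orthogonal to $x_i$ then occur. The two neighbours across such a face share the same $x_j$-interval and hence the same $h_{K,j}$. The trace of $\partial_j\v_h$ is a tangential derivative and therefore single-valued. On boundary faces $\partial_{ij}\m=\partial_j(\partial_i\m)=0$. The remainder $-\sum_K\frac{h_{K,j}^2}{6}\int_K\partial_{iij}\m\cdot\partial_j\v_h\,\d\x$ is $O(h^2)\norm{\m}_{H^3}\norm{\grad\v_h}$.

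There are two smaller corrections.

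\begin{enumerate}
\item In three dimensions, $\partial_{ij}I_K\m$ is not constant on a cuboid; it is linear in the third variable. The tensor-product trapezoidal error is also not just your sum of $\partial_j^2$-terms. The product of the two one-dimensional quadrature errors in the transverse directions adds
\[
\frac{h_{K,j}^2h_{K,\ell}^2}{36}\int_K\partial_{123}I_h\m\cdot\partial_{123}\v_h\,\d\x .
\]
This term is harmless but must be included: $\norm{\partial_{123}I_K\m}_{L^2(K)}\le C\norm{\m}_{H^3(K)}$ and $\norm{\partial_{123}\v_h}_{L^2(K)}\le Ch_K^{-2}\norm{\grad\v_h}_{L^2(K)}$ make it $O(h^2)$.
\item In your sketch of the classical estimate, the ``linear part'' is not $O(h^2)$ merely because of the extra factor $h_K$. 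Cauchy--Schwarz with $\norm{\partial_i(I_K\m-\m)}_{L^2(K)}=O(h_K)$ gives only $O(h)$. You need that $\m\mapsto\int_K\partial_i(I_K\m-\m)(x_j-\bar x_{K,j})\,\d\x$ vanishes on quadratics (Bramble--Hilbert or Lin's identity). That is what the cited superconvergence lemma supplies.
\end{enumerate}
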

The following lemma presents the quadrature error between the discrete inner product and the standard inner product for the tensor product element, which is a consequence of the Bramble--Hilbert lemma \cite{MR263214}.
\begin{lemma}{\rm(\cite[Lemma 3.6]{MR4377027})}\label{lemma1}
  Let $K\in \mathscr{T}_h$, and denote by $(\cdot, \cdot)_K$ the $L^2$ inner product on $K$. Let $V_h$ be the space of polynomials of some degree $l\ge 0$ on $K$. Then
  \begin{equation}\label{r7}
    \left|\l 1, I_Kf-f \r_{K}\right|\leq Ch_K^{2}\sum_{i=1}^d\norm{\partial_{i}^{2}f}_{L^1(K)}\quad \forall f\in V_h,
  \end{equation}
  where $C$ is a positive constant independent of $h$ and $f$ (but may depend on $l$).
\end{lemma}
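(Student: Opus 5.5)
We prove the estimate on the reference element and then pull it back to $K$. Let $\Phi_K:\hat K=[0,1]^d\to K$ be the affine map $\Phi_K(\hat\x)=\x_K^-+J_K\hat\x$ with $J_K=\operatorname{diag}(h_{K,1},\dots,h_{K,d})$, and set $\hat f=f\circ\Phi_K$. Interpolation commutes with this map, $\widehat{I_Kf}=I_{\hat K}\hat f$, since both use the same vertex values and $Q_1$ is preserved under the diagonal affine map. Hence
\[
(1,I_Kf-f)_K=|\det J_K|\,\bigl(1,I_{\hat K}\hat f-\hat f\bigr)_{\hat K}.
\]

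On $\hat K$ we consider the linear functional $\ell(\hat g)=\int_{\hat K}(I_{\hat K}\hat g-\hat g)\,\d\hat\x$ on the finite-dimensional space $\hat V$ of polynomials of degree at most $l$. Two properties of $\ell$ are needed.

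First, $\ell$ vanishes on $Q_1(\hat K)$, because $I_{\hat K}$ reproduces $Q_1$. In fact, since the 2-point Gauss--Lobatto (trapezoidal) rule in each variable is exact for polynomials of degree one in that variable, $\ell$ vanishes on every polynomial whose degree in each variable separately is at most one.

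Second, we show that $\ell(\hat g)=0$ whenever $\partial_i^2\hat g=0$ for all $i$, i.e. whenever $\hat g$ is affine in each variable separately. Such a $\hat g$ is multilinear, hence lies in $Q_1$, so the first property applies. Therefore $\ell$ vanishes on the kernel of the linear map $\hat g\mapsto(\partial_1^2\hat g,\dots,\partial_d^2\hat g)$ on $\hat V$. Because $\hat V$ is finite dimensional, $\hat g\mapsto\sum_i\|\partial_i^2\hat g\|_{L^1(\hat K)}$ is a norm on the quotient of $\hat V$ by this kernel, and $\ell$ is bounded with respect to it. This is the Bramble--Hilbert step in its finite-dimensional form:
\[
|\ell(\hat g)|\le C(l)\sum_{i=1}^d\|\partial_i^2\hat g\|_{L^1(\hat K)}.
\]

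We then scale back. The chain rule gives $\partial_{\hat x_i}^2\hat f=h_{K,i}^2(\partial_i^2f)\circ\Phi_K$, and
\[
\|(\partial_i^2 f)\circ\Phi_K\|_{L^1(\hat K)}=|\det J_K|^{-1}\|\partial_i^2 f\|_{L^1(K)}.
\]
Multiplying the reference bound by $|\det J_K|$, the determinant factors cancel and we obtain
\[
|(1,I_Kf-f)_K|\le C\sum_i h_{K,i}^2\|\partial_i^2f\|_{L^1(K)}\le Ch_K^2\sum_i\|\partial_i^2f\|_{L^1(K)},
\]
which is \eqref{r7}.

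The main obstacle is the kernel identification in the second property: the bound uses only the pure second derivatives, with no mixed terms. This works because the tensor-product interpolant annihilates exactly the multilinear functions. The diagonal form of $J_K$ is also essential, since it makes each pure derivative scale with its own $h_{K,i}^2$ and keeps the mapped function inside the polynomial space $\hat V$.
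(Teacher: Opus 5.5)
Your proof is correct and follows the route the paper indicates. The paper gives no argument of its own; it cites \cite[Lemma 3.6]{MR4377027} as a consequence of the Bramble--Hilbert lemma, and your three steps are exactly that argument: the reduction to $\hat K$ by the diagonal affine map, the observation that the trapezoidal-rule error functional vanishes on $Q_1$ (the common kernel of the pure second derivatives), and the anisotropic rescaling $\partial_{\hat x_i}^2\hat f=h_{K,i}^2(\partial_i^2 f)\circ\Phi_K$. Your finite-dimensional quotient-norm form of the Bramble--Hilbert step is a clean way to carry it out: it explains why $C$ may depend on $l$, and it avoids relying on $W^{2,1}$ control of point values, which fails when $d=3$.
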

The following elementary estimates will be used to control
quadrature errors in the nonlinear terms.
\begin{lemma}
Let \(\a_h,\b_h,\mathbf c_h\in \S_h^1\), and define
\[
 \F^0_h:=\a_h\cdot\b_h,
 \qquad
 \F^1_h:=\a_h(\b_h\cdot\mathbf c_h).
\]
Then, for each \(K\in\mathscr T_h\), one has
$
  \F^0_h|_K\in Q_2(K),
  \F^1_h|_K\in Q_3(K).
$
Moreover, along each \(x_i\)-direction edge of \(K\), the following
one-dimensional interpolation estimates hold for every
\(\z\in\mathcal Q_K\):
\begin{align}
  \left|
  \partial_i(I_h\F^0_h-\F^0_h)|_K(\z)
  \right|
  \le&
  \frac{h_{K,i}}{2}
  \left|
  \partial_{ii}\F^0_h|_K(\z)
  \right|,
  \label{y6}\\
  \left|
  \partial_i(I_h\F^1_h-\F^1_h)|_K(\z)
  \right|
  \le&
  \frac{h_{K,i}}{2}
  \left|
  \partial_{ii}\F^1_h|_K(\z)
  \right|
  +
  \frac{h_{K,i}^2}{6}
  \left|
  \partial_{iii}\F^1_h|_K(\z)
  \right|.
  \label{u7}
\end{align}
\end{lemma}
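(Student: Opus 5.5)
The plan is to treat this lemma as a purely one-dimensional statement along each edge, after first settling the degree claims.

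\emph{Degrees.} On \(K\) every component of \(\a_h,\b_h,\mathbf c_h\) lies in \(Q_1(K)\), i.e.\ it is affine in each variable \(x_j\) separately. A product of two such functions has degree at most \(2\) in each variable, and a product of three has degree at most \(3\). Hence \(\F^0_h|_K\in Q_2(K)\) and \(\F^1_h|_K\in Q_3(K)\).

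\emph{Reduction to an edge.} Fix a direction \(i\) and an \(x_i\)-edge of \(K\) with endpoints \(\z^\pm=\z_{K,i,\alpha'}^\pm\). Parametrize it by \(x_i\in[x_{K,i}^-,x_{K,i}^+]\), with the other coordinates frozen at the edge values, and write \(g(x_i)\) for the restriction of \(\F^0_h|_K\) or \(\F^1_h|_K\).

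\begin{itemize}
\item By \eqref{t2}, \(\partial_i(I_h\F|_K)\) is constant along this edge and equals \(\mathfrak D_{K,i}^{\alpha'}\F=(g(x^+)-g(x^-))/h_{K,i}\). This holds because \(I_h\F|_K\in Q_1(K)\) and \(I_h\F\) agrees with \(\F\) at the vertices.
\item Therefore \(\partial_i(I_h\F-\F)|_K(\z^\pm)=\frac{g(x^+)-g(x^-)}{h}-g'(x^\pm)\), where \(h=h_{K,i}\).
\item This is the error of approximating an endpoint derivative by the one-sided difference quotient, so everything reduces to a one-variable Taylor expansion at the node itself.
\end{itemize}

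\emph{The case \(\F^0_h\).} Here \(g\) is a polynomial of degree \(\le2\) in \(x_i\), so its Taylor expansion at \(x^-\) is exact:
\[
g(x^+)=g(x^-)+h\,g'(x^-)+\tfrac{h^2}{2}g''.
\]
Thus the error at \(\z^-\) is exactly \(\frac h2 g''\). At \(\z^+\) one expands backward and gets \(-\frac h2 g''\). Since \(g''=\partial_{ii}\F^0_h|_K(\z)\) (constant along the edge), this gives \eqref{y6}, in fact with equality.

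\emph{The case \(\F^1_h\).} Now \(g\) is cubic, so the expansion at the node \(\z\in\{\z^-,\z^+\}\) is still exact but has one more term:
\[
g(x^\mp\pm h)=g(x^\mp)\pm h\,g'(x^\mp)+\tfrac{h^2}{2}g''(x^\mp)\pm\tfrac{h^3}{6}g'''.
\]
This yields
\[
\partial_i(I_h\F^1_h-\F^1_h)|_K(\z)=\pm\tfrac h2\,g''(\z)+\tfrac{h^2}{6}\,g''',
\]
where \(g'''=\partial_{iii}\F^1_h|_K\) is constant along the edge, so it may be evaluated at \(\z\). The triangle inequality then gives \eqref{u7}.

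\emph{Main obstacle.} The only delicate point is to evaluate all derivatives at the same node \(\z\) at which the bound is stated, rather than at an intermediate point. Expanding exactly (the polynomial degree is finite) around \(\z\) itself achieves this, and it also explains the precise constants \(\tfrac12\) and \(\tfrac16\).
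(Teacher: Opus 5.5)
Your proof is correct and follows the same route as the paper. You restrict to an $x_i$-edge, identify $\partial_i(I_h\F|_K)$ with the edge difference quotient via \eqref{t2}, and compare it with the endpoint derivative through an exact Taylor expansion of the quadratic or cubic about the node itself. Your explicit expansion gives the exact error $\pm\frac{h_{K,i}}{2}\partial_{ii}\F^0_h|_K(\z)$ in the quadratic case and a vector identity followed by the triangle inequality in the cubic case; this just fills in details that the paper's short sketch leaves implicit.
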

\begin{proof}
Restricting \(\F_h^0\) and \(\F_h^1\) to an \(x_i\)-direction edge of
\(K\), we obtain one-dimensional polynomials of degree at most \(2\) and
\(3\), respectively. The derivative of their linear interpolants is the
difference quotient between the two endpoints of the edge. Comparing this
constant derivative with the derivative of the original polynomial at an
endpoint gives \eqref{y6} and \eqref{u7}; the vector-valued estimate is
obtained componentwise.
\end{proof}
The following two lemmas establish discrete Sobolev inequalities and the relation between two discrete Laplacian operators, which will be used in the completion of the induction argument. The proof of Lemma \ref{lemma3} can be found in the Appendix \ref{y5}.
  \begin{lemma}{\rm \cite{MR4377027}}
    The discrete Laplacian operator $\Delta_h:\S_h^1\to \S_h^1$ is defined by duality
    \begin{equation}\label{b4}
    (\Delta_h\v_h, \w_h) = -(\grad\v_h,\grad\w_h), \quad \forall\v_h, \w_h\in \S_h^1.
    \end{equation}
  Then the following discrete Sobolev interpolation and embedding inequalities hold:
    \begin{align}
      \norm{\v_h}_{L^\infty}&\le C\norm{\v_h}^{1-\frac{d}{4}}(\norm{\v_h}+\norm{\Delta_h\v_h})^{\frac{d}{4}},\label{b5}\\
      \norm{\grad\v_h}_{L^6}&\le C\norm{\Delta_h\v_h}.\label{b6}
    \end{align}
  \end{lemma}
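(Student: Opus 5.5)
The statement is cited from \cite{MR4377027}, so the plan is to reproduce the standard argument for $Q_1$ elements on the Cartesian tensor-product mesh. Both inequalities follow from one ingredient: a discrete $H^2$-regularity estimate
\[
\norm{\v_h}_{W^{1,6}}+\norm{\v_h}_{L^\infty}\le C\l\norm{\v_h}+\norm{\Delta_h\v_h}\r ,
\]
obtained by comparison with a continuous Neumann problem.

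\textbf{Step 1: auxiliary continuous problem.} Given $\v_h\in\S_h^1$, let $\w\in \H^1$ solve
\[
-\Delta\w+\w=-\Delta_h\v_h+\v_h \ \text{ in }\Omega,\qquad \partial_{\mathbf n}\w=\mathbf 0 \ \text{ on }\partial\Omega .
\]
Since $\Omega$ is a rectangle or cuboid, hence convex, elliptic regularity gives
\[
\norm{\w}_2\le C\l\norm{\Delta_h\v_h}+\norm{\v_h}\r .
\]
By the definition \eqref{b4}, $\v_h$ is exactly the Ritz projection of $\w$ for the form $(\grad\cdot,\grad\cdot)+(\cdot,\cdot)$.

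\textbf{Step 2: stability of the Ritz projection.} I then use the $W^{1,6}$ and $L^\infty$ stability of the Ritz projection on quasi-uniform meshes, which is known for tensor-product elements on convex polyhedra:
\[
\norm{\grad R_h\w}_{L^6}\le C\norm{\w}_{W^{1,6}}\le C\norm{\w}_2 .
\]
This step yields \eqref{b6}.

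\textbf{Step 3: the interpolation inequality \eqref{b5}.} Split $\v_h=(\v_h-I_h\w)+I_h\w$.
\begin{itemize}
\item For the interpolant, use the continuous Gagliardo--Nirenberg inequality $\norm{\w}_{L^\infty}\le C\norm{\w}^{1-d/4}\norm{\w}_2^{d/4}$ together with $\norm{\w}\le C\norm{\v_h}$. The latter holds because $\w-\v_h$ satisfies $(\grad(\w-\v_h),\grad\chi)+(\w-\v_h,\chi)=0$ and $\norm{\w-\v_h}\le Ch^2\norm{\w}_2$.
\item For the discrete remainder, apply the inverse inequality and the $L^2$ error bound:
\[
\norm{\v_h-I_h\w}_{L^\infty}\le Ch^{-d/2}\norm{\v_h-I_h\w}\le Ch^{2-d/2}\norm{\w}_2 .
\]
\item Finally, I need $h^{2-d/2}\norm{\w}_2$ to be bounded by the target quantity. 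When $h^2\norm{\w}_2\lesssim\norm{\v_h}$, I interpolate this as $(h^2\norm{\w}_2)^{1-d/4}\norm{\w}_2^{d/4}$. This condition follows from the inverse estimate $\norm{\Delta_h\v_h}\le Ch^{-2}\norm{\v_h}$.
\end{itemize}

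\textbf{Main obstacle.} I expect the $W^{1,6}$ stability of the Ritz projection for $Q_1$ elements under Neumann conditions to be the delicate point. The first thing I would try is to cite the weighted-norm and discrete Green's function results for rectangular meshes on convex domains.

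If that proves awkward, the fallback for \eqref{b6} is to avoid Ritz stability: bound $\norm{\grad(\v_h-I_h\w)}_{L^6}$ by an inverse estimate times $\norm{\grad(\v_h-I_h\w)}\le Ch\norm{\w}_2$, giving a factor $h^{1-d/3}\le C$ for $d\le3$. Then $\norm{\grad I_h\w}_{L^6}\le C\norm{\w}_2$ follows from $W^{1,6}$-stability of interpolation of $H^2$ functions in $d\le3$. This fallback is elementary and is probably the route I would actually take.

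Both bounds have $\norm{\v_h}+\norm{\Delta_h\v_h}$ on the right. To obtain exactly \eqref{b6}, apply them to $\v_h-\bar\v_h$, where $\bar\v_h$ is the mean. Subtracting a constant does not change $\Delta_h\v_h$ or $\grad\v_h$, so it remains to show $\norm{\v_h-\bar\v_h}\le C\norm{\Delta_h\v_h}$. This follows from the discrete Poincaré inequality applied to $\norm{\grad\v_h}^2=-(\Delta_h\v_h,\v_h-\bar\v_h)$.
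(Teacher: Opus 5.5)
The paper does not prove this lemma; it quotes it from \cite{MR4377027}. Your argument is correct and is the standard route behind such discrete Sobolev inequalities: you write $\v_h$ as the Ritz projection of the $H^2$-regular solution $\w$ of the auxiliary Neumann problem, bound $I_h\w$ by Gagliardo--Nirenberg, and bound $\v_h-I_h\w$ by inverse estimates together with $h^2\norm{\w}_2\le C\norm{\v_h}$. Subtracting the mean then removes $\norm{\v_h}$ from \eqref{b6}. Your elementary fallback for \eqref{b6}, an inverse estimate giving the factor $h^{1-d/3}$ plus $W^{1,6}$-stability of $I_h$ on $H^2$ for $d\le 3$, already closes the argument, so you never need the $W^{1,6}$-stability of the Ritz projection.
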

\begin{lemma}\label{lemma3}
  Let \(\mathscr T_h\) be a Cartesian rectangular/cuboidal tensor-product
  mesh satisfying shape-regularity and quasi-uniformity. The two discrete Laplacian operators $\tilde\Delta_h$ and $\Delta_h$ have the following relation for all $\v_h\in\S_h^1$
  \begin{equation}\label{p2}
    \|\Delta_h\v_h\|
    \le C_5\|\tilde\Delta_h\v_h\|_h .
  \end{equation}
\end{lemma}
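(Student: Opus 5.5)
Set $\w_h:=\tilde\Delta_h\v_h\in\S_h^1$, so that $(\w_h,\bar\v_h)_h=-(\grad\v_h,\grad\bar\v_h)_h$ for all $\bar\v_h\in\S_h^1$. By \eqref{b4}, $\Delta_h\v_h$ is the $L^2$-Riesz representative of $\bar\v_h\mapsto-(\grad\v_h,\grad\bar\v_h)$. The plan is to compare the exact and quadrature stiffness forms and the exact and lumped mass forms, and then run a duality argument:
\[
\|\Delta_h\v_h\|=\sup_{\bar\v_h\neq0}\frac{(\grad\v_h,\grad\bar\v_h)}{\|\bar\v_h\|}.
\]
Everything reduces to bounding $|(\grad\v_h,\grad\bar\v_h)|$ by $C\|\w_h\|_h\|\bar\v_h\|$. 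We cannot use the trivial bound $\|\grad\v_h\|\|\grad\bar\v_h\|$, because the inverse inequality loses a factor $h^{-1}$ and $\|\grad\v_h\|$ is not controlled by $\|\w_h\|_h$ in a useful way. The argument has to exploit the tensor-product structure.

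The key step is an exact algebraic identity on Cartesian meshes. On each $K$, $\partial_i\v_h|_K$ is linear in $x_i$-independent directions, i.e.\ constant in $x_i$ and $Q_1$ in the other variables. Hence $\partial_i\v_h\,\partial_i\bar\v_h$ is of degree at most $2$ in each variable $x_j$ with $j\neq i$ and constant in $x_i$. The 2-point Gauss--Lobatto (trapezoidal) rule is exact in $x_i$ but not in $x_j$. Using the edge differences \eqref{t2}, I will write the difference between the exact and lumped stiffness forms as
\[
(\grad\v_h,\grad\bar\v_h)-(\grad\v_h,\grad\bar\v_h)_h=-\sum_K\sum_{i}\sum_{j\neq i}\frac{h_{K,j}^2}{6}\int_K\partial_j\partial_i\v_h\,\partial_j\partial_i\bar\v_h\,\d\x
\]
(up to the exact constant), since the trapezoidal error for a quadratic $q$ on an interval of length $h$ is $-\tfrac{h^2}{6}\int q''/2$-type. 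The mixed derivatives $\partial_{ij}\v_h$ are constants on $K$, so the correction is a nonnegative semidefinite form with only mixed second differences. The analogous identity holds for the mass forms, $(\v,\bar\v)-(\v,\bar\v)_h=-\sum_K\sum_j\frac{h_{K,j}^2}{6}\int_K\partial_j\v\,\partial_j\bar\v+\dots$ (higher mixed terms). The one-dimensional prototype is the standard finite-difference fact: in 1D, $\tilde\Delta_h$ is the usual three-point Laplacian, while $\Delta_h=M^{-1}A$ with consistent mass $M$. Since $M\ge\tfrac13 M_h$ spectrally, we get $\|\Delta_h\v\|=\|M^{-1}A\v\|_M\le C\|M_h^{-1}A\v\|_{M_h}$ because $A=M_h\,(-\tilde\Delta_h)$.

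This suggests a cleaner route that I will actually follow. In matrix form, $-\tilde\Delta_h=M_h^{-1}\tilde A$ with quadrature stiffness $\tilde A$, and $-\Delta_h=M^{-1}A$. Then
\[
\|\Delta_h\v\|^2=\v^\top AM^{-1}A\v,\qquad \|\tilde\Delta_h\v\|_h^2=\v^\top\tilde AM_h^{-1}\tilde A\v.
\]
Two facts are needed. First, $M\ge cM_h$; this is the spectral norm equivalence \eqref{b3}, giving $M^{-1}\le c^{-1}M_h^{-1}$. Second, the tensor-product structure gives $A=\tilde A$ in 1D. In $d$ dimensions, $A=\sum_i M^{(1)}\otimes\cdots\otimes A^{(1)}_i\otimes\cdots$ and $\tilde A=\sum_i M_h^{(1)}\otimes\cdots\otimes A^{(1)}_i\otimes\cdots$ (exact stiffness in the $i$-th factor, mass in the others). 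With the lumped one-dimensional mass $D=M_h^{(1)}$, I will show $\|AM^{-1/2}\v\|\le C\|\tilde AM_h^{-1/2}\v\|$ using the one-dimensional spectral equivalences $\tfrac13D\le M^{(1)}\le D$. The operators $D^{-1/2}A^{(1)}D^{-1/2}$ and $D^{-1/2}M^{(1)}D^{-1/2}$ are not simultaneously diagonalizable on nonuniform grids, so the argument cannot simply diagonalize. The main obstacle is therefore controlling the cross terms between $M^{(1)}$ and $A^{(1)}$ in different factors. I expect to handle them by writing $M^{(1)}=D-\tfrac{1}{6}B$ with $B=h^2$-weighted second difference, $0\le B\le CD$, which is a local perturbation. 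This expresses $A-\tilde A$ through mixed differences bounded by $Ch^2\|\partial_{ij}\|$-type terms. These are controlled by $\|\tilde\Delta_h\v\|_h$ through discrete summation by parts on the grid, using the fact that on a tensor grid $\|\mathfrak D_i\mathfrak D_j\v\|_h\le\|\tilde\Delta_h\v\|_h$ (the discrete $H^2$ regularity for the finite-difference Laplacian with Neumann conditions, proven by expanding $\|\sum_i\mathfrak D_i^2\v\|_h^2$ and summing by parts, with the boundary terms vanishing by the discrete Neumann condition). Combining these bounds gives \eqref{p2}.
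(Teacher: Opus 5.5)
Your proposal is essentially the paper's proof written in Kronecker-matrix form. The mass equivalence $M\ge cM_h$ reduces matters to lumped norms. The splitting $M^{(1)}=D-\tfrac16B$ exhibits the Cartesian stiffness quadrature error $A-\tilde A$ as $h^2$-weighted mixed-derivative terms; the paper gets the same bound from Lemma~\ref{lemma1}. That $h^2$ must be absorbed on the test side by the inverse estimate $h^2\|\partial_i\partial_j\w_h\|\le C\|\w_h\|$, so what remains is $C(\sum_{i\ne j}\|\partial_i\partial_j\v_h\|^2)^{1/2}$, not a quantity carrying a factor $h^2$. Your summation-by-parts bound $\sum_{i\ne j}\|\partial_i\partial_j\v_h\|_h^2\le\|\tilde\Delta_h\v_h\|_h^2$ is exactly the paper's identity $(\delta_{h,i}^2\v_h,\delta_{h,j}^2\v_h)_h=\|\partial_i\partial_j\v_h\|_h^2$. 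The paper derives it from $A_iM_h^{-1}A_j=S_i\otimes S_j\otimes\bigotimes_{\ell\ne i,j}D_\ell$ for the directional pieces $A_i$ of $\tilde A$.

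Two slips to fix when you write it out. First, the inequality you need is $\|M_h^{-1/2}A\mathbf v\|\le C\|M_h^{-1/2}\tilde A\mathbf v\|$, not $\|AM^{-1/2}\mathbf v\|\le\cdots$. Second, for $d=3$ the mixed derivatives $\partial_i\partial_j\v_h$ are not elementwise constant, so expanding $M^{(1)}\otimes M^{(1)}-D\otimes D$ also produces a $\tfrac1{36}A^{(1)}\otimes B\otimes B$ (i.e.\ $\partial_1\partial_2\partial_3$) contribution; your bound $0\le B\le CD$ controls it in the same way.
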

\subsection{Truncation error estimates}
Now we will estimate the truncation error.
For an arbitrary $t_{n+1}$, the exact solution of the problem \eqref{a7}--\eqref{c4} satisfies the following equation for all $\v_h\in \S_h^1$:
	\begin{align*}
		&(\partial_t\m(t_{n+1}),\v_h) + \l \grad \m(t_{n+1}), \grad \v_h \r = \l |\grad \m(t_{n+1})|^2\m(t_{n+1}), \v_h \r,
	\end{align*}
	which can be rewritten as
	\begin{align}
	&\l \frac{I_h\m(t_{n+1})-I_h\m(t_{n})}{\Delta t},\v_h \r_h+\l \grad I_h\m(t_{n+1}) , \grad \v_h\r_h\notag\\
  =&\l\grad I_h\m(t_{n})\cdot\grad I_h\m(t_{n+1}) , I_h\m(t_{n})\cdot\v_h \r_h + \Ep(\v_h)\notag\\
  &-\l (I_h\m(t_{n+1}) - I_h\m(t_{n}))\cdot\grad I_h\m(t_{n}), \grad( I_h\m(t_{n})\cdot\v_h)\r_h\notag\\
  &+\l\grad I_h(\m(t_{n+1}) - \m(t_n)),\grad I_h(I_h\m(t_{n})(I_h\m(t_{n})\cdot\v_h))\r_h\notag\\
  &-\l\grad I_h(\m(t_{n+1}) - \m(t_n)),\grad(I_h\m(t_{n})(I_h\m(t_{n})\cdot\v_h))\r_h.\label{a1}
	\end{align}
Here, the truncation error $\Ep(\v_h):\S_h^1\to \mathbb{R}$ is defined as 
\begin{align*}
  &\Ep(\v_h) \\
  =& \l \frac{I_h\m(t_{n+1})-I_h\m(t_{n})}{\Delta t}, \v_h\r_h - (\partial_t\m(t_{n+1}),\v_h)\\
  &+\l \grad I_h\m(t_{n+1}) , \grad \v_h\r_h-\l \grad \m(t_{n+1}), \grad \v_h \r\\
  &+\l |\grad \m(t_{n+1})|^2\m(t_{n+1}), \v_h \r - \l \grad I_h\m(t_{n})\cdot\grad I_h\m(t_{n+1}) , I_h\m(t_{n})\cdot\v_h \r_h\\
  &+\l I_h(\m(t_{n+1})- \m(t_{n}))\cdot\grad I_h\m(t_{n}), \grad(I_h\m(t_{n})\cdot\v_h)\r_h\\
  &-\l\grad I_h(\m(t_{n+1}) - \m(t_n)),\grad I_h(I_h\m(t_{n})(I_h\m(t_{n})\cdot\v_h))\r_h\\
  &+\l\grad I_h(\m(t_{n+1}) - \m(t_n)),\grad(I_h\m(t_{n})(I_h\m(t_{n})\cdot\v_h))\r_h\\
  \eqqcolon&\Ep_1(\v_h)+\Ep_2(\v_h)+\Ep_3(\v_h)+\Ep_4(\v_h).
\end{align*}
We have the following bound for $\Ep(\v_h)$.
\begin{lemma}
  Under the regularity assumptions of Theorem \ref{th1}, the truncation error $\Ep(\v_h)$ satisfies for all $\v_h\in \S_h^1$
  \begin{align}
    |\Ep(\v_h)|\le C(\Delta t^2+h^4+\norm{\v_h}^2)+\epsilon\norm{\grad \v_h}^2.\label{a4}
  \end{align}
\end{lemma}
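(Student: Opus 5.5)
I will bound the four groups $\Ep_1,\dots,\Ep_4$ separately, each by $C(\Delta t+h^2)\norm{\v_h}_{H^1}$-type quantities, or better by $C(\Delta t+h^2)\norm{\v_h}$ plus pieces that can be absorbed. Young's inequality then gives \eqref{a4}. Wherever $\norm{\grad\v_h}$ appears, it will carry a factor $(\Delta t+h^2)$ and be absorbed into $\epsilon\norm{\grad\v_h}^2$.

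\textbf{Step 1: $\Ep_1$, the time derivative.} Write it as
\[
\Bigl(\tfrac{I_h\m(t_{n+1})-I_h\m(t_n)}{\Delta t},\v_h\Bigr)_h-\Bigl(\tfrac{I_h\m(t_{n+1})-I_h\m(t_n)}{\Delta t},\v_h\Bigr)
\]
plus the terms $(I_h D_t\m-D_t\m,\v_h)$ and $(D_t\m-\partial_t\m(t_{n+1}),\v_h)$, where $D_t$ is the backward difference.
\begin{itemize}
\item The last term is $O(\Delta t)\norm{\v_h}$ by Taylor's formula with $\m_{tt}\in L^\infty(L^2)$.
\item The middle term is $O(h^2)\norm{\v_h}$, since $\m_t\in L^\infty(H^2)$.
\item The quadrature term uses Lemma \ref{lemma1} with $f=I_h(D_t\m)\cdot\v_h\in Q_2(K)$. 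Because $I_h D_t\m$ and $\v_h$ are each linear in $x_i$, we have $\partial_i^2 f=2\,\partial_i(I_hD_t\m)\cdot\partial_i\v_h$. This gives $Ch^2\norm{\grad I_hD_t\m}\norm{\grad\v_h}\le Ch^2\norm{\grad\v_h}$.
\end{itemize}

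\textbf{Step 2: $\Ep_2$, the stiffness term.} This is exactly Lemma \ref{lemma11}, giving $Ch^2\norm{\v_h}_{H^1}$.

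\textbf{Step 3: $\Ep_3$ and $\Ep_4$, the nonlinear terms.} Here I first use $|\m|=1$, which gives $\m\cdot\grad\m=0$ and $\m\cdot\partial_t\m=0$. For $\Ep_3$, I replace $(|\grad\m|^2\m,\v_h)$ by $(\grad\m(t_n)\cdot\grad\m(t_{n+1}),\m(t_n)\cdot\v_h)$, at a cost of $O(\Delta t)\norm{\v_h}$. I then pass from continuous to interpolated quantities and from $(\cdot,\cdot)$ to $(\cdot,\cdot)_h$.
\begin{itemize}
\item The interpolation errors are $O(h)$ in $W^{1,4}$, which is not enough directly. I therefore split $\grad I_h\m-\grad\m$ and handle the $O(h)$ gradient error superconvergently, as in Lemma \ref{lemma11}, by integrating by parts elementwise against $\m\cdot\v_h$. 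This yields $Ch^2\norm{\v_h}_{H^1}$ using $\m\in W^{2,4}\cap H^3$.
\item The quadrature error is handled by Lemma \ref{lemma1} with $f$ a product of $Q_1$-type factors. The second derivatives $\partial_i^2 f$ involve at most one derivative of $\v_h$ (in the form $\partial_i\v_h$), so the contribution is $Ch^2\norm{\v_h}_{H^1}$.
\end{itemize}
For $\Ep_4$, I first estimate $(I_h(\m(t_{n+1})-\m(t_n))\cdot\grad I_h\m(t_n),\grad(I_h\m(t_n)\cdot\v_h))_h$. Its first factor is $O(\Delta t)$ in $L^\infty$, and $\grad(I_h\m\cdot\v_h)$ is bounded by $\norm{\v_h}_{H^1}$, so the term is $O(\Delta t)\norm{\v_h}_{H^1}$. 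The correction difference $\grad I_h\F-\grad\F$, with $\F=I_h\m(I_h\m\cdot\v_h)$, is treated by estimate \eqref{u7}. The terms $\partial_{ii}\F$ and $\partial_{iii}\F$ involve at most one derivative of $\v_h$, because $\v_h$ is multilinear. Together with the factor $\norm{\grad I_h(\m(t_{n+1})-\m(t_n))}_{L^\infty}\le C\Delta t$, this gives $C\Delta t\, h\norm{\v_h}_{H^1}\le C\Delta t\norm{\v_h}_{H^1}$.

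\textbf{Main obstacle.} I expect the hardest part to be the $O(h^2)$ (rather than $O(h)$) consistency of the nonlinear term $\Ep_3$ once $\grad\m$ is replaced by $\grad I_h\m$. The plan is to expand
\[
\grad I_h\m(t_n)\cdot\grad I_h\m(t_{n+1})-\grad\m(t_n)\cdot\grad\m(t_{n+1})
\]
bilinearly. Each difference $\grad(I_h\m-\m)$ is then paired with the smooth $\grad\m$ times $\m\cdot\v_h$, and handled by the superconvergent elementwise identity underlying Lemma \ref{lemma11}. The quadratic remainder is $O(h^2)$ in $L^2$, using $W^{1,4}$ interpolation bounds.
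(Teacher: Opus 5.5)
Your plan does prove \eqref{a4}, and for $\Ep_1$, $\Ep_2$, $\Ep_3$ it follows the paper's argument. $\Ep_1$ and $\Ep_2$ are handled by standard consistency and Lemma~\ref{lemma11}. $\Ep_3$ is split into an interpolation part and a quadrature part, with the latter handled by Lemma~\ref{lemma1}. In the interpolation part, what does the work is plain integration by parts using $\partial_{\n}\m=\mathbf 0$, together with $\norm{\m-I_h\m}_{L^4}\le Ch^2$, exactly as in your bullet. It is not the identity behind Lemma~\ref{lemma11}, which needs the test function to be a discrete gradient $\grad\v_h$.

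The genuine difference is in $\Ep_4$. Write $\w:=\m(t_{n+1})-\m(t_n)$, $\a:=I_h\m(t_n)$ and $\F:=\a(\a\cdot\v_h)$. You bound the term $(I_h\w\cdot\grad\a,\grad(\a\cdot\v_h))_h$ directly by $C\Delta t\norm{\v_h}_{H^1}$. The paper instead expands $\grad\F$ by the product rule, so that this term cancels. What remains is $(\grad(I_h\w\cdot\a),\grad(\a\cdot\v_h))_h+(\grad I_h\w,(\a\cdot\v_h)\grad\a)_h-(\grad I_h\w,\grad I_h\F)_h$. The paper controls this by \eqref{y6} and the discrete-Laplacian bounds \eqref{y7}--\eqref{y8}, obtaining $|\Ep_4(\v_h)|\le C\Delta t\norm{\v_h}_h$ with no gradient on $\v_h$.

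Your route is simpler, since it avoids \eqref{y7}--\eqref{y8}, and it suffices for \eqref{a4} after Young's inequality. The paper's route buys the sharper intermediate bound \eqref{c5}: there every $O(\Delta t)$ term is paired with $\norm{\v_h}$, and only the $O(h^2)$ terms are paired with $\norm{\v_h}_{H^1}$. This bound is reused in the dual estimate of $\norm{\tilde\Delta_h\tilde\e_h^{n+1}}_h$. With your version, the inverse inequality would turn $C\Delta t\norm{\grad\v_h}$ into $C\Delta t h^{-1}\norm{\v_h}$, adding a $\Delta t/h$ term to \eqref{h5}. The minimum with \eqref{h6} would then be unbounded for $\Delta t\gg h$, so the coupling-free bootstrap for \eqref{e1} would not close.

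There is one slip. The bound $\norm{\grad I_h\w}_{L^\infty}\le C\Delta t$ requires $\m_t\in L^\infty(0,T;\mathbf W^{1,\infty})$, which is not assumed, and $\H^2\not\subset\mathbf W^{1,\infty}$ for $d=2,3$. Keep that factor in $L^2$ instead: $\norm{\grad I_h\w}\le C\Delta t$ follows from $\m_t\in L^\infty(0,T;\H^2)$. Then apply Cauchy--Schwarz against the nodal bound from \eqref{u7}, namely $|\partial_i(I_h\F-\F)(\z)|\le Ch\bigl(|\v_h(\z)|+|\grad\v_h(\z)|\bigr)$. This gives $C\Delta t\,h\norm{\v_h}_{H^1}\le C\Delta t\norm{\v_h}$ by the inverse inequality, so this piece is actually as good as the paper's.
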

\begin{proof}
  The estimate for $\Ep_1(\v_h)$ and $\Ep_2(\v_h)$ can be found in \cite[Section 3.2]{MR4377027} and Lemma \ref{lemma11}, as a result, we have
\begin{align*}
  |\Ep_1(\v_h)|+|\Ep_2(\v_h)|\le C\l\Delta t+h^{2}\r\norm{\v_h}+Ch^{2} \norm{\v_h}_{H^1}.
\end{align*}
For elementwise smooth functions $f$ and $g$, we denote
\[
\Ep_{\rm quad}(f, g) \coloneqq (f,g)_h - (f,g).
\]
Then $\Ep_3(\v_h)$ can be split as follows:
\begin{align*}
  \Ep_3(\v_h) 
  =\ &\l |\grad \m(t_{n+1})|^2, \m(t_{n+1})\cdot\v_h \r - \l \grad I_h\m(t_{n})\cdot\grad I_h\m(t_{n+1}),I_h\m(t_{n})\cdot \v_h \r\\
  &-\Ep_{\rm quad}\l \grad I_h\m(t_{n})\cdot\grad I_h\m(t_{n+1}),I_h\m(t_{n})\cdot \v_h\r\\
  \eqqcolon & \Ep_{3,1}(\v_h) + \Ep_{3,2}(\v_h).
\end{align*}
Using integration by parts and the interpolation error estimates \cite[Theorem 4.4.20]{brenner2008mathematical}, we have
\begin{align*}
  &|\Ep_{3,1}(\v_h)| \\
  =& \left|\l \grad (\m(t_{n+1}) - I_h\m(t_{n+1}))\cdot\grad \m(t_{n+1}), \m(t_{n+1})\cdot\v_h \r\right|\\
  &+ \left|\l \grad (I_h\m(t_{n+1}) - I_h\m(t_{n}))\cdot\grad \m(t_{n+1}), \m(t_{n+1})\cdot\v_h \r\right|\\
  &+  \left|\l\grad (I_h\m(t_{n}) - \m(t_{n}))\cdot\grad (\m(t_{n+1}) - I_h\m(t_{n+1})), \m(t_{n+1})\cdot\v_h \r\right|\\
  &+  \left|\l\grad \m(t_{n})\cdot\grad (\m(t_{n+1})- I_h\m(t_{n+1})), \m(t_{n+1})\cdot\v_h \r\right|\\ 
  &+ \left|\l\grad I_h\m(t_{n})\cdot\grad I_h\m(t_{n+1}), (\m(t_{n+1}) - I_h\m(t_{n}))\cdot\v_h \r\right|\\
  \le&\left|\l( \m(t_{n+1})- I_h\m(t_{n+1}))\cdot\Delta \m(t_{n+1}), \m(t_{n+1})\cdot\v_h \r\right|\\
  &+\left|\l( \m(t_{n+1})- I_h\m(t_{n+1}))\cdot\grad \m(t_{n+1}), \grad(\m(t_{n+1})\cdot\v_h )\r\right|\\
  &+C\norm{\grad(I_h\m(t_{n+1}) - I_h\m(t_n))}\norm{\grad \m(t_{n+1})}_{L^\infty}\norm{\m(t_{n+1})}_{L^\infty}\norm{\v_h}\\
  &+C\norm{\grad (I_h\m(t_{n}) - \m(t_{n}))}_{L^4}\norm{\grad (\m(t_{n+1}) - I_h\m(t_{n+1}))}_{L^4}\norm{\m(t_{n+1})}_{L^\infty}\norm{\v_h}\\
  &+\left|\l\Delta\m(t_{n})\cdot( \m(t_{n+1})- I_h\m(t_{n+1})), \m(t_{n+1})\cdot\v_h \r\right|\\
  &+\left|\l\grad\m(t_{n})\cdot(\m(t_{n+1})- I_h\m(t_{n+1})), \grad(\m(t_{n+1})\cdot\v_h)\r\right|\\
  &+C\norm{\grad I_h\m(t_n)}_{L^\infty}\norm{\grad I_h\m(t_{n+1})}_{L^\infty}\norm{\m(t_{n+1}) - I_h\m(t_{n})}\norm{\v_h}\\
  \le& C\l \Delta t+h^{2}\r\norm{\v_h}+Ch^{2}\norm{\grad\v_h}.
\end{align*}
Using the quadrature error bound in Lemma \ref{lemma1}, and noting that functions in $\S_h^1$ have at most first-order nonzero partial derivatives in each variable, we have
\begin{align*}
  \Ep_{3,2}(\v_h) 
  \le& Ch^{2}\sum_{K\in\mathscr{T}_h}\sum_{i=1}^d\norm{\partial_i^{2}\l (\grad I_h\m(t_{n})\cdot\grad I_h\m(t_{n+1})) I_h\m(t_{n})\cdot\v_h \r}_{L^1(K)}\\
  \le& Ch^{2}\sum_{K\in\mathscr{T}_h}\norm{\m(t_{n})}_{W^{2,4}(K)}\norm{\m(t_{n+1})}_{W^{2,4}(K)}\norm{\m(t_{n})}_{W^{1,\infty}(K)}\norm{\v_h}_{H^1(K)}\\
  \le & Ch^{2}\norm{\v_h}_{H^1}.
\end{align*}
We thus have 
\[
|\Ep_3(\v_h)|\le C\l \Delta t+h^{2}\r\norm{\v_h}+Ch^{2}\norm{\v_h}_{H^1}.
\]
Finally, split $\Ep_4(\v_h)$ as follows:
\begin{align*}
  \Ep_4(\v_h)
  =&\l \grad \l(I_h\m(t_{n+1})- I_h\m(t_{n}))\cdot I_h\m(t_{n})\r, \grad(I_h\m(t_{n})\cdot\v_h)\r_h\\
  &+\l\grad (I_h\m(t_{n+1}) - I_h\m(t_n)),(I_h\m(t_{n})\cdot\v_h)\cdot\grad I_h\m(t_{n}) \r_h\\
  &-\l\grad (I_h\m(t_{n+1}) - I_h\m(t_n)),\grad I_h(I_h\m(t_{n})(I_h\m(t_{n})\cdot\v_h))\r_h\\
  \eqqcolon&\Ep_{4,1}(\v_h)+\Ep_{4,2}(\v_h)+\Ep_{4,3}(\v_h)
\end{align*}
By the Lemma \ref{lemma1}, integration by parts, the interpolation error estimates and the inverse inequality, we have
\begin{align*}
  &\left|\l\tilde\Delta_h I_h(\m(t_{n+1}) - \m(t_{n})), \w_h\r_h\right| = \left|\l \grad I_h(\m(t_{n+1}) - \m(t_{n})), \grad\w_h \r_h\right|\\
  \le&\left|\Ep_{\rm quad}\l \grad I_h(\m(t_{n+1}) - \m(t_{n})), \grad\w_h \r\right|+\left|\l \grad (\m(t_{n+1}) - \m(t_{n})), \grad\w_h \r\right|\\
  &+\left|\l \grad (I_h(\m(t_{n+1}) - \m(t_{n})) - (\m(t_{n+1}) - \m(t_{n}))), \grad\w_h \r\right|\\
  \le&Ch^{2}\sum_{K\in\mathscr{T}_h}\sum_{i=1}^d\sum_{j\ne i}\norm{\partial_{ij}I_h(\m(t_{n+1}) - \m(t_{n}))}_{L^2(K)}\norm{\partial_{ij}\w_h }_{L^2(K)}\\
  &+\left|\l \Delta (\m(t_{n+1}) - \m(t_{n})), \w_h \r\right|+Ch\norm{\m(t_{n+1}) - \m(t_{n})}_{H^2}\norm{\grad\w_h}\\
  \le&C\norm{\m(t_{n+1}) - \m(t_{n})}_{H^2}\norm{\w_h} 
  \le C\Delta t\norm{\m_t}_{L^\infty(0,T;H^2)}\norm{\w_h},
\end{align*}
which combines the equivalence \eqref{b3} and take the supremum, imply that 
\begin{align}\label{y7}
\norm{\tilde\Delta_h I_h(\m(t_{n+1}) - \m(t_{n}))}_h\le C\Delta t.
\end{align}
We can similarly derive that
\begin{align}\label{y8}
\norm{\tilde\Delta_h I_h((\m(t_{n+1}) - \m(t_{n}))\cdot\m(t_n))}_h\le C\norm{\m_t}_{L^\infty(0,T;H^2)}\norm{\m(t_{n})}_{W^{2,4}}\Delta t.
\end{align}
Note that for all $\z\in\mathcal{Q}$, we have
\[
I_h\l I_h(\m(t_{n+1})- \m(t_{n}))\cdot I_h\m(t_{n})\r(\z) = I_h((\m(t_{n+1})- \m(t_{n}))\cdot\m(t_{n}))(\z).
\]
Thus, using \eqref{y6} twice with $\F^0_h = I_h(\m(t_{n+1})- \m(t_{n}))\cdot I_h\m(t_{n})$ and $\F^0_h = I_h\m(t_{n})\cdot\v_h$ respectively, we have  
\begin{align*}
  &\Ep_{4,1}(\v_h)\\
  =&\l \grad (I_h(\m(t_{n+1})- \m(t_{n}))\cdot I_h\m(t_{n})), \grad (I_h\m(t_{n})\cdot\v_h)\r_h\\
  &-\l \grad I_h((\m(t_{n+1})- \m(t_{n}))\cdot\m(t_{n})), \grad (I_h\m(t_{n})\cdot\v_h)\r_h\\
  &+\l \grad I_h((\m(t_{n+1})- \m(t_{n}))\cdot\m(t_{n})), \grad ((I_h\m(t_{n})\cdot\v_h) - I_h(I_h\m(t_{n})\cdot\v_h))\r_h\\
  &-\l \tilde\Delta_h I_h((\m(t_{n+1})- \m(t_{n}))\cdot\m(t_{n})),  I_h\m(t_{n})\cdot\v_h\r_h\\
  \le&\sum_{i=1}^d\l \frac{h}{2}|\partial_{ii}(I_h(\m(t_{n+1})- \m(t_{n}))\cdot I_h\m(t_{n}))|, |\partial_i(I_h\m(t_{n})\cdot\v_h)|\r_{h}\\
  &+\sum_{i=1}^d\l|\partial_iI_h((\m(t_{n+1})- \m(t_{n}))\cdot\m(t_{n}))|, \frac{h}{2}|\partial_{ii}(I_h\m(t_{n})\cdot\v_h)|\r_{h}\\
  &+C\norm{\tilde\Delta_hI_h((\m(t_{n+1})- \m(t_{n}))\cdot\m(t_{n}))}_{h}\norm{\v_h}_h\\
  \le&C\Delta t\norm{\v_h}_h,
\end{align*}
where \eqref{y8} is used.
Furthermore, using \eqref{y7} again, we obtain
\begin{align*}
  &\Ep_{4,2}(\v_h)+\Ep_{4,3}(\v_h)\\
  \le& C\norm{\grad I_h(\m(t_{n+1}) - \m(t_n))}\norm{I_h\m(t_{n})}_{L^\infty}\norm{\v_h}\norm{\grad I_h\m(t_{n})}_{L^\infty}\\
  &+\norm{\tilde\Delta_h I_h(\m(t_{n+1}) - \m(t_n))}_h\norm{I_h\m(t_{n})}_{L^\infty}^2\norm{\v_h}_h\le C\Delta t\norm{\v_h}_h.
\end{align*}
As a result, we have 
\begin{align*}
  \Ep_{4}(\v_h)\le C\Delta t\norm{\v_h}_h.
\end{align*}
Combining the above estimates with \eqref{p7}, using \eqref{b3}--\eqref{p7} again, we obtain
\begin{equation}\label{c5}
|\Ep(\v_h)|\le C(\Delta t+h^{2})\norm{\v_h}+Ch^{2} \norm{\v_h}_{H^1}.
\end{equation}
The Young's inequality yields \eqref{a4}.
\end{proof}
\subsection{Mathematical induction and  consequences}
We define the following error functions for $k = 0,1, \cdots, N$:
\[
\e_h^k = I_h\m(t_k)-\m_h^k, \quad \tilde{\e}_h^k = I_h\m(t_k)-\tilde{\m}_h^k,
\]
with $\e_h^0 = \tilde{\e}_h^0 = 0$.

We first present the fundamental relations between $\e_h^k$ and $\tilde{\e}_h^k$ for $k = 0,1, \cdots, N$ (see \cite{MR4377027, MR4883780}):
\begin{align}
  \norm{\e_h^k}_{L_h^p}\le C\norm{\tilde{\e}_h^k}_{L_h^p},\quad 
  \norm{\e_h^k}_{L^p}\le C\norm{\tilde{\e}_h^k}_{L^p},\ 1\le p\le +\infty.\label{e2}
\end{align}
As mentioned before, the above relations are not sufficient to derive the unconditional optimal error estimate. We thus need the following ideal relationship in the discrete norm.
\begin{lemma}\label{lemma4}
    For the scheme \eqref{a2}--\eqref{a8}, the auxiliary error
    \(\widetilde\e_h^{n+1}\) and the projected error \(\e_h^{n+1}\) satisfy
    \begin{align*}
      \norm{ \e_h^{n+1}}_h\le \norm{ \tilde\e_h^{n+1}}_h.
    \end{align*}
\end{lemma}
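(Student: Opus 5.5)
Since the discrete norm \(\norm{\cdot}_h\) is a positively weighted sum of nodal values, namely \(\norm{\w_h}_h^2=\sum_{K}\sum_{\z\in\mathcal Q_K}\omega_{K,\z}|\w_h(\z)|^2\), it suffices to prove the pointwise nodal inequality
\[
|\e_h^{n+1}(\z)|\le|\tilde\e_h^{n+1}(\z)|\qquad\forall\,\z\in\mathcal Q .
\]
Squaring, multiplying by the weights and summing then gives the lemma. At a node \(\z\) set \(\a:=I_h\m(t_{n+1})(\z)=\m(\z,t_{n+1})\) and \(\s:=\tilde\m_h^{n+1}(\z)\). By \eqref{e4} we have \(|\a|=1\), and by \eqref{a8} \(\m_h^{n+1}(\z)=\s/|\s|\). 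Hence \(\e_h^{n+1}(\z)=\a-\s/|\s|\) and \(\tilde\e_h^{n+1}(\z)=\a-\s\). The claim therefore reduces to an elementary statement in \(\mathbb R^3\): for a unit vector \(\a\) and a vector \(\s\) with \(|\s|\ge1\),
\[
\Big|\a-\frac{\s}{|\s|}\Big|\le|\a-\s| .
\]

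The first step is to verify the hypothesis \(|\s|\ge1\) at every node. This is exactly \eqref{c7}, and I would rederive it explicitly. Assume inductively that \(|\m_h^n(\z)|=1\) for all nodes; this holds for \(n=0\) since \(\m_h^0=I_h\m^0\) and \(|\m^0|=1\), and it propagates through \eqref{a8}. Test \eqref{a2} with \(\v_h=\phi_\z\m_h^n(\z)\). The mass-lumped products on both sides localize to the node \(\z\). The right-hand side becomes a multiple of \(\m_h^n(\z)\cdot\bigl(\m_h^n(\z)\times(\cdots)\bigr)=0\), which gives the orthogonality \eqref{c6}. Pythagoras then yields
\[
|\s|^2=|\m_h^n(\z)|^2+|\s-\m_h^n(\z)|^2\ge1 .
\]
In particular \(\s\ne0\), so the projection is well defined.

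The second step is the elementary inequality. Write \(r=|\s|\ge1\) and \(\hat\s=\s/r\), and let \(c=\a\cdot\hat\s\in[-1,1]\). Then
\[
|\a-\s|^2-|\a-\hat\s|^2=(1-2rc+r^2)-(2-2c)=r^2-1-2c(r-1)=(r-1)(r+1-2c).
\]
Since \(r\ge1\) and \(r+1-2c\ge 2-2c\ge0\), the difference is nonnegative. Geometrically, the radial projection onto the sphere is the metric projection onto the convex unit ball restricted to points outside it, and the metric projection onto a convex set is nonexpansive toward any point of the set, in particular toward \(\a\).

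The only substantive point is the first step: the condition \(|\tilde\m_h^{n+1}(\z)|\ge1\) must hold exactly at every node. This relies on using the lumped inner product on both sides of \eqref{a2}, so that nodal testing decouples. That property is already built into the scheme, so I expect no real obstacle. One must simply note that the inequality is nodal and hence holds in \(\norm{\cdot}_h\) but not in the standard \(L^2\) norm.
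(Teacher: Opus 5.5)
Your proof is correct and follows the paper's own argument. You reduce to the nodal inequality $|\e_h^{n+1}(\z)|\le|\tilde\e_h^{n+1}(\z)|$, using that $|\m(\z,t_{n+1})|=1$ and that $|\tilde\m_h^{n+1}(\z)|\ge 1$ follows from the nodal orthogonality \eqref{c6}--\eqref{c7}. Your factorization $(r-1)(r+1-2c)\ge 0$ is a cleaner form of the paper's step, which bounds $\m(\z,t_{n+1})\cdot\tilde\m_h^{n+1}(\z)\le|\tilde\m_h^{n+1}(\z)|$ and arrives at $(|\tilde\m_h^{n+1}(\z)|-1)(1-|\tilde\m_h^{n+1}(\z)|)\le 0$.
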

  \begin{proof}Since $I_h(\m(t_{n+1}))(\z) = \m(\z, t_{n+1})$ for all $\z\in \mathcal{Q}$, we only need to prove
  \[
  |\m(\z, t_{n+1}) - \m_h^{n+1}(\z)| \le |\m(\z, t_{n+1}) - \tilde{\m}_h^{n+1}(\z)|, \quad \forall\z\in \mathcal{Q}.
  \]
  In fact, using \eqref{c7}, we have
  \begin{align*}
    &|\m(\z, t_{n+1}) - \m_h^{n+1}(\z)|^2 - |\m(\z, t_{n+1}) - \tilde{\m}_h^{n+1}(\z)|^2 \\
    = &2\l \m(\z, t_{n+1})\cdot \tilde{\m}_h^{n+1}(\z) - \m(\z, t_{n+1})\cdot \m_h^{n+1}(\z)\r+1 - |\tilde{\m}_h^{n+1}(\z)|^2\\
    = &2 \m(\z, t_{n+1})\cdot \tilde{\m}_h^{n+1}(\z)\l1-\frac{1}{|\tilde{\m}_h^{n+1}(\z)|} \r+1-|\tilde{\m}_h^{n+1}(\z)|^2\\
    \le&2\l  |\tilde{\m}_h^{n+1}(\z)|-1\r-\l 1 + |\tilde{\m}_h^{n+1}(\z)|\r\l |\tilde{\m}_h^{n+1}(\z)|-1 \r \\
    =&\l  |\tilde{\m}_h^{n+1}(\z)|-1\r\l 1 - |\tilde{\m}_h^{n+1}(\z)|\r\le 0.
  \end{align*}
\end{proof}

We shall establish the following estimates for $k=0, 1, \cdots, N$ by using mathematical induction:
\begin{align}
  \max_{0\le l\le k}\norm{ \tilde{\e}_h^{l}}^2+\Delta t\sum_{l=1}^{k}\norm{\grad \tilde\e_h^{l}}_h^2\le& C(\Delta t^2+h^{4})\label{e0},\\
  \norm{\tilde\Delta_h \tilde\e_h^{k}}_h\le& M\label{e1}.
\end{align}
where $M$ is a fixed constant independent of $h$, $\Delta t$, and the time level $k$.

Now we present some consequences based on the induction assumptions. Using \eqref{e0}, \eqref{e1}, Lemma~\ref{lemma3}, and the discrete
Sobolev inequality \eqref{b5}, we obtain
\begin{align}\label{b1}
  \norm{\tilde{\e}_h^{k}}_{L^\infty}
  \le C(M)(\Delta t+h^2)^{\frac{1}{4}}.
\end{align}
Inequality \eqref{b1} guarantees that $\tilde\m_{h}^k$ is in a sufficiently small neighborhood of $\mathbb{S}^2$.
Based on \eqref{b1}, the following result presented in Lemma 3.5 of \cite{MR4883780} and Lemma 3.8 of \cite{MR4377027} connects the two errors in the $W^{1,p}$ norm:
\begin{align}
  &\norm{\e_h^k}_{W^{1,p}}\le C\norm{\tilde{\e}_h^k}_{W^{1,p}}+Ch,\quad  2\le p\le+\infty.\label{c2}
\end{align}
In particular, by combining  \eqref{e2}, \eqref{e0} and \eqref{c2}, we have
\begin{align}\label{t7}
  &\max_{1\le l\le k} \norm{\e_h^{{l}}}^2\le  C(\Delta t^2+h^{4}),\quad \Delta t\sum_{l=1}^{k}\norm{\e_h^l}_{H^1}^2\le C(\Delta t^2+h^{2}),
\end{align}
Together with the triangle inequality and interpolation error estimates, this means that completing the induction also completes the proof of Theorem~\ref{th1}.

Additionally, by the definition of the discrete operator $\Delta_h$, we obtain 
\begin{equation*}
\norm{\grad \tilde{\e}_h^{k}}^2=(\grad \tilde{\e}_h^{k},\grad \tilde{\e}_h^{k})=-(\Delta_h \tilde{\e}_h^{k},\tilde{\e}_h^{k})\le \norm{\Delta_h \tilde{\e}_h^{k}}\norm{\tilde{\e}_h^{k}}.
\end{equation*}
Thus, employing the Lebesgue interpolation inequality \cite[Theorem 1.5]{robinson2016three} and using \eqref{b6}, we derive
\begin{align}
  \norm{\grad \tilde{\e}_h^{k}}_{L^4}\le&\norm{\grad \tilde{\e}_h^{k}}^{\frac{1}{4}}\norm{\grad \tilde{\e}_h^{k}}_{L^6}^{\frac{3}{4}}\le C\norm{\tilde{\e}_h^{k}}^{\frac{1}{8}}\norm{\Delta_h \tilde{\e}_h^{k}}^{\frac{1}{8}+\frac{3}{4}}.\label{d4}
\end{align}
Then \eqref{e0}--\eqref{d4} and Lemma \ref{lemma3} imply that 
\begin{align}\label{b2}
  \norm{\grad {\e}_h^{k}}_{L^4}\le\norm{\grad \tilde{\e}_h^{k}}_{L^4}+Ch\le C\norm{\tilde{\e}_h^{k}}^{\frac{1}{8}}\norm{\Delta_h \tilde{\e}_h^{k}}^{\frac{7}{8}}+Ch\le C(M)(\Delta t+h^{2})^{\frac{1}{8}}.
\end{align}
Taking sufficiently small $h$ and $\Delta t$ in \eqref{b1} and \eqref{b2}, we have
\begin{align}\label{t6}
  \norm{\tilde{\e}_h^{k}}_{L^\infty}
  \le 1, \quad \norm{\grad {\e}_h^{k}}_{L^4}\le 1.
\end{align}
By inequality \eqref{e2}, \eqref{t6} and \eqref{p7}, together with the triangle inequality and the regularity assumptions, we have the following bounds independent of $M$
\begin{equation}\label{n8}
  \norm{\e_{h}^k}_{L^\infty_h}\le C, \ \norm{\m_{h}^k}_{L^\infty_h}\le C,\  \norm{\grad \e_{h}^k}_{L^4_h}\le C,\  \norm{\grad \m_{h}^k}_{L^4_h}\le C.
\end{equation}
The inequalities \eqref{t6} and \eqref{n8} are used in estimating the nonlinear terms.

Under the above induction assumptions, we also have the following lemmas that will be used in the error estimates and the completion of the induction argument. The proofs are provided in Appendices \ref{u3} and \ref{u4}, respectively.
\begin{lemma}\label{lemma10}
  Under the induction assumptions \eqref{e0}--\eqref{e1}, the following estimate holds for sufficiently small $\Delta t$ and $h$,
  \begin{align*}
    \norm{\grad\e_h^k}_h^2 
    \le&C_6\l \norm{\tilde\e_h^k}_h^2 + \norm{\grad\tilde\e_h^k}_h^2\r.
  \end{align*}
\end{lemma}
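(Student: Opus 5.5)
We want $\|\nabla \e_h^k\|_h^2 \le C_6(\|\tilde\e_h^k\|_h^2+\|\nabla\tilde\e_h^k\|_h^2)$. The plan is to work edge by edge through the identity \eqref{o1}, because the discrete broken gradient norm only sees nodal differences along element edges. Write $\m^k:=\m(t_k)$, so that $\e_h^k=I_h\m^k-\m_h^k$. For an $i$-edge $(\z^-,\z^+)$ of $K$,
\[
\mathfrak D_{K,i}^{\alpha'}\e_h^k=\frac{\m^k(\z^+)-\m^k(\z^-)}{h_{K,i}}-\frac{\m_h^k(\z^+)-\m_h^k(\z^-)}{h_{K,i}} .
\]
It therefore suffices to bound $|\mathfrak D\e_h^k|$ by $C\big(|\mathfrak D\tilde\e_h^k|+|\tilde\e_h^k(\z^+)|+|\tilde\e_h^k(\z^-)|\big)$, with $C$ depending only on $\|\m^k\|_{W^{1,\infty}}$. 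Squaring, multiplying by $2\omega_K$, summing over edges and elements, and using $\sum_K\omega_K\sum_{\z\in\mathcal Q_K}|\tilde\e_h^k(\z)|^2\simeq\|\tilde\e_h^k\|_h^2$ then gives the claim.

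To get the edge bound, set $N(\s)=\s/|\s|$, so that $\m_h^k(\z)=N(\tilde\m_h^k(\z))$ and $\m^k(\z)=N(\m^k(\z))$. Write $\a^\pm=\m^k(\z^\pm)$ and $\tilde\a^\pm=\tilde\m_h^k(\z^\pm)=\a^\pm-\tilde\e_h^k(\z^\pm)$. By \eqref{t6}, $|\tilde\e_h^k(\z)|\le C(M)(\Delta t+h^2)^{1/4}$ is small for small $\Delta t,h$, so all $\tilde\a^\pm$ lie in $\{1/2\le|\s|\le 3/2\}$, where $N$ is smooth with bounded first and second derivatives. (Alternatively, \eqref{c7} gives $|\tilde\a^\pm|\ge1$.) Then
\[
N(\a^+)-N(\a^-)-\big(N(\tilde\a^+)-N(\tilde\a^-)\big)
=\int_0^1\!\Big[DN(\a^-+s(\a^+-\a^-))(\a^+-\a^-)-DN(\tilde\a^-+s(\tilde\a^+-\tilde\a^-))(\tilde\a^+-\tilde\a^-)\Big]ds .
\]
Adding and subtracting $DN(\tilde\a^-+s(\tilde\a^+-\tilde\a^-))(\a^+-\a^-)$ splits this into two pieces:
\begin{itemize}
\item a term bounded by $\|DN\|_\infty|\tilde\e_h^k(\z^+)-\tilde\e_h^k(\z^-)|=h_{K,i}\|DN\|_\infty|\mathfrak D\tilde\e_h^k|$;
\item a term bounded by $\|D^2N\|_\infty\max_\pm|\tilde\e_h^k(\z^\pm)|\,|\a^+-\a^-|$.
\end{itemize}
Since $|\a^+-\a^-|\le h_{K,i}\|\nabla\m^k\|_{L^\infty}$, dividing by $h_{K,i}$ gives exactly the desired edge bound, with constant depending on $\|\m\|_{L^\infty(W^{1,\infty})}$, which is finite because $\mathbf W^{2,4}\hookrightarrow\mathbf W^{1,\infty}$ for $d\le3$.

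The main obstacle is ensuring that the segments joining $\tilde\a^-$ and $\tilde\a^+$ stay in a region where $D^2N$ is bounded, i.e.\ away from the origin; convexity of $\{|\s|\ge1\}$ fails. This is where the $L^\infty$ smallness from \eqref{b1}/\eqref{t6} enters. Each $\tilde\a^\pm$ is within $\delta$ of the unit vector $\a^\pm$, and $|\a^+-\a^-|\le Ch$. Hence every point on the segment is within $\delta+Ch$ of $\a^-$ and has norm at least $1-\delta-Ch\ge1/2$ for sufficiently small $h,\Delta t$. This explains the hypothesis ``sufficiently small $\Delta t$ and $h$''. Finally, the norm equivalence of nodal sums with $\|\cdot\|_h$ makes $C_6$ independent of $M$ once the smallness threshold is fixed.
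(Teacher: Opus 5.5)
Your argument is correct and has the same architecture as the paper's proof. Both reduce, via \eqref{o1}, to the edge bound $|\mathfrak D\e_h^k|\le C\big(|\mathfrak D\tilde\e_h^k|+|\tilde\e_h^k(\z^-)|+|\tilde\e_h^k(\z^+)|\big)$, with $C$ depending only on $\norm{\m}_{L^\infty(0,T;W^{1,\infty})}$, and then square, weight by $\omega_K$ and sum. The resulting nodal sum is in fact exactly a multiple of $\norm{\tilde\e_h^k}_h^2$, since each vertex of $K$ lies on exactly one $i$-edge.

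The difference is in how the edge bound is obtained. You treat $N(\s)=\s/|\s|$ as a smooth map and combine an integral Taylor representation with a bound on $D^2N$, in the spirit of Appendix~\ref{u4}. This requires the convex hull of $\a^\pm$ and $\tilde\a^\pm$ to stay away from the origin, hence genuine nodal smallness of $\tilde\e_h^k$ (your condition $\delta+Ch\le 1/2$). In exchange, it never uses \eqref{c7}, so it would apply to any nodal projection scheme.

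The paper instead splits $\mathfrak D(\tilde\m_h^n/|\tilde\m_h^n|)$ by a discrete Leibniz rule into $\tilde\m_h^n(\z^+)\,\mathfrak D(1/|\tilde\m_h^n|)+\mathfrak D\tilde\m_h^n/|\tilde\m_h^n(\z^-)|$, and then uses only first-order information:
\begin{itemize}
\item $|\tilde\m_h^n(\z)|\ge1$ from \eqref{c7} gives $|1-1/|\tilde\m_h^n(\z^-)||\le|\tilde\e_h^n(\z^-)|$ and makes $s\mapsto 1/s$ Lipschitz on $[1,\infty)$;
\item $|I_h\m(t_n)(\z)|=1$ makes $|\tilde\m_h^n(\z^+)|^2-|\tilde\m_h^n(\z^-)|^2$ linear in $\tilde\e_h^n$ up to a quadratic remainder.
\end{itemize}
That route needs only a uniform nodal bound on $\tilde\e_h^n$ (to ensure $|\tilde\m_h^n|\le 2$) rather than smallness. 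The price is that it relies on the scheme-specific property \eqref{c7}.

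One minor citation slip: \eqref{t6} only gives $\norm{\tilde\e_h^k}_{L^\infty}\le 1$; the rate $C(M)(\Delta t+h^2)^{1/4}$ comes from \eqref{b1}.
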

\begin{lemma}\label{lemma8}
  Under the assumptions \eqref{e0}--\eqref{e1}, there exists a positive constant $C_0(M)$, depending on \(M\) but independent of \(h\), \(\Delta t\), and \(k\), such that
\begin{equation}\label{p6}
  \|\tilde\Delta_h\m_h^k\|_h\le C_0(M).
\end{equation}
\end{lemma}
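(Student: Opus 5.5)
We will bound $\tilde\Delta_h\m_h^k$ by splitting it as
\[
\m_h^k=I_h\m(t_k)-\e_h^k,
\qquad
\tilde\Delta_h\m_h^k=\tilde\Delta_h I_h\m(t_k)-\tilde\Delta_h\e_h^k ,
\]
and treating the two pieces separately.

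\textbf{The interpolant term.} Here we repeat the argument that gave \eqref{y7}, but with $\m(t_k)$ in place of $\m(t_{n+1})-\m(t_n)$. For $\w_h\in\S_h^1$ we write
\[
(\grad I_h\m,\grad\w_h)_h=\Ep_{\rm quad}(\grad I_h\m,\grad\w_h)+(\grad(I_h\m-\m),\grad\w_h)+(\grad\m,\grad\w_h).
\]
\begin{itemize}
\item The last term equals $-(\Delta\m,\w_h)$ by integration by parts, using the Neumann condition.
\item The quadrature term is treated with Lemma~\ref{lemma1} and an inverse inequality.
\item The interpolation term is bounded by $Ch\|\m\|_2\|\grad\w_h\|\le C\|\m\|_2\|\w_h\|$.
\end{itemize}
Taking the supremum over $\w_h$ and using \eqref{b3}, we get $\|\tilde\Delta_hI_h\m(t_k)\|_h\le C\|\m\|_{L^\infty(H^2)}$, a bound independent of $M$.

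\textbf{The projected error.} The key is to compare $\e_h^k$ with $\tilde\e_h^k$. Assumption \eqref{e1} already gives $\|\tilde\Delta_h\tilde\e_h^k\|_h\le M$, and hence also $\|\tilde\Delta_h\tilde\m_h^k\|_h\le C+M$. It remains to control $\tilde\Delta_h(\m_h^k-\tilde\m_h^k)$, since $\e_h^k-\tilde\e_h^k=\tilde\m_h^k-\m_h^k$. Writing $\m_h^k=I_h(\tilde\m_h^k/|\tilde\m_h^k|)=I_h G(\tilde\m_h^k)$ with the smooth map $G(\s)=\s/|\s|$, we test against $\w_h$:
\[
(\grad\m_h^k,\grad\w_h)_h=\sum_K\sum_i\sum_{\alpha'}2\omega_K\,\mathfrak D_{K,i}^{\alpha'}\m_h^k\cdot \mathfrak D_{K,i}^{\alpha'}\w_h .
\]
Next we expand each edge difference of $G(\tilde\m_h^k)$ by Taylor's formula:
\[
\mathfrak D\m_h^k=DG(\tilde\m_h^k(\z^-))\,\mathfrak D\tilde\m_h^k+O\bigl(h\,|\mathfrak D\tilde\m_h^k|^2\bigr).
\]
\begin{itemize}
\item For the linear part, we perform a discrete summation by parts, i.e.\ move the edge difference onto $DG(\tilde\m_h^k)^\top\w_h$. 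This produces $(\tilde\Delta_h\tilde\m_h^k,\, I_h(DG(\tilde\m_h^k)\w_h))_h$ plus commutator terms of the form $(\mathfrak D\tilde\m_h^k,\ \mathfrak D(DG(\tilde\m_h^k))\,\w_h)$. These are bounded by $C\|\grad\tilde\m_h^k\|_{L_h^4}^2\|\w_h\|_h$.
\item The quadratic remainder is bounded by $C\|\grad\tilde\m_h^k\|_{L^4_h}^2\|\w_h\|_h$ after an inverse inequality on $\w_h$, since $h\,\mathfrak D\w_h$ is controlled by nodal values.
\end{itemize}
By \eqref{t6}, \eqref{n8} and \eqref{b1}, $\tilde\m_h^k$ stays within distance $1$ of $\mathbb{S}^2$ pointwise, so $DG$ and $D^2G$ are uniformly bounded. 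By \eqref{b2}, $\|\grad\tilde\m_h^k\|_{L^4_h}$ is bounded. Taking the supremum over $\w_h$ therefore gives
\[
\|\tilde\Delta_h\m_h^k\|_h\le C\bigl(\|\tilde\Delta_h\tilde\m_h^k\|_h+\|\grad\tilde\m_h^k\|_{L^4_h}^2\bigr)\le C(1+M)=:C_0(M).
\]

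\textbf{Main obstacle.} The hardest step is the discrete summation by parts for the nonlinear nodal map on a broken $Q_1$ quadrature stiffness form. Two issues need care. First, edge contributions from neighbouring elements must recombine correctly. Second, the Neumann boundary must produce no boundary terms. We would handle both with the tensor-product edge representation \eqref{o1}, where the discrete Laplacian is essentially a finite-difference Laplacian on the Cartesian grid. The product rule for differences then holds exactly, with a shift that is absorbed into the $O(h)$ remainder.
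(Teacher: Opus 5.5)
Your argument is correct in substance, but it follows a different route from the paper's.

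\textbf{How the two routes differ.} The paper works in strong, nodal form. It identifies $\tilde\Delta_h$ at every node with a finite-difference Laplacian, using one-sided stencils at boundary nodes, which it treats separately. It Taylor-expands $\s\mapsto\s/|\s|$ about the central value $\tilde\m_h^k(\z)$ towards each neighbour. This gives the pointwise inequality $|\tilde\Delta_h\m_h^k(\z)|\le C|\tilde\Delta_h\tilde\m_h^k(\z)|+C\sum_i(|D_i^+\tilde\m_h^k(\z)|^2+|D_i^-\tilde\m_h^k(\z)|^2)$, which it then sums with the lumped weights. You work in weak form instead. Both routes end at $\|\tilde\Delta_h\m_h^k\|_h\le C\|\tilde\Delta_h\tilde\m_h^k\|_h+C\|\grad\tilde\m_h^k\|_{L_h^4}^2$.

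\textbf{Your ``main obstacle'' does not arise.} Since $DG$ is symmetric, the identity $DG(\tilde\m_h^k(\z^-))\,\mathfrak D\w_h=\mathfrak D\,I_h\bigl(DG(\tilde\m_h^k)\w_h\bigr)-\mathfrak D\bigl(DG(\tilde\m_h^k)\bigr)\,\w_h(\z^+)$ is exact on each edge. Summing the first part over all edges gives exactly $(\grad\tilde\m_h^k,\grad I_h(DG(\tilde\m_h^k)\w_h))_h=-(\tilde\Delta_h\tilde\m_h^k,I_h(DG(\tilde\m_h^k)\w_h))_h$, by the duality definition of $\tilde\Delta_h$ on all of $\S_h^1$. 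So you need no recombination across elements, no boundary terms and no stencil identification. Moreover $\|I_h(DG(\tilde\m_h^k)\w_h)\|_h\le\|\w_h\|_h$ because $|\tilde\m_h^k(\z)|\ge1$. What the paper's route buys in exchange is a transparent pointwise inequality in finite-difference language.

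\textbf{Two smaller remarks.} First, your initial splitting into $\tilde\Delta_hI_h\m(t_k)-\tilde\Delta_h\e_h^k$ is superfluous. The interpolant bound is only needed to get $\|\tilde\Delta_h\tilde\m_h^k\|_h\le C+M$. Second, you use the smallness from \eqref{d4} and obtain $C(1+M)$. The paper uses only $\|\grad\tilde\e_h^k\|_{L_h^4}\le CM$ and obtains $C(1+M)^2$. Either suffices.

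\textbf{One justification needs repair.} You use a uniform bound on $D^2G$ along the segment joining $\tilde\m_h^k(\z^-)$ and $\tilde\m_h^k(\z^+)$, in both the Taylor remainder and the commutator. This bound does not follow from the nodal values lying within distance $1$ of $\mathbb{S}^2$. Under that condition two adjacent values could be nearly antipodal, and the segment could pass through the origin. What you need is that adjacent values are close:
\[
|\tilde\m_h^k(\z^+)-\tilde\m_h^k(\z^-)|\le Ch+2\|\tilde\e_h^k\|_{L^\infty}\le\tfrac12
\]
for small $h$ and $\Delta t$. This follows from the smallness in \eqref{b1} and the Lipschitz continuity of $\m$. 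Combined with $|\tilde\m_h^k(\z^\pm)|\ge1$, it keeps the segment in $\{|\s|\ge\frac12\}$, which is exactly how the paper argues.
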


For \(k=0\), \eqref{e0}--\eqref{e1} hold trivially. Assume that
\eqref{e0}--\eqref{e1} hold for all time levels up to \(k=n\). We shall
prove that they also hold at \(k=n+1\) in the following subsections.
\subsection{ \texorpdfstring{Proof of \eqref{e0} for $k = n+1$}{Proof of (4.14) for k = n+1}}
Subtracting \eqref{u2} from \eqref{a1}, we obtain the following error equation:
\begin{align}
  &\l\frac{\tilde{\e}_h^{n+1}-\e_h^{n}}{\Delta t},\v_h \r_h+\l \grad \tilde{\e}_h^{n+1} , \grad \v_h\r_h\notag\\
  =\ &\l \grad I_h\m(t_{n})\cdot\grad I_h\m(t_{n+1}) , I_h\m(t_{n})\cdot\v_h \r_h - \l \grad\m_h^n\cdot\grad \tilde{\m}_h^{n+1} ,\m_h^n\cdot\v_h\r_h\notag\\
  &-\l (I_h\m(t_{n+1})- I_h\m(t_{n}))\cdot\grad I_h\m(t_{n}), \grad(I_h\m(t_{n})\cdot\v_h)\r_h\notag\\
  &+\l(\tilde{\m}_h^{n+1} - \m_h^n)\cdot\grad\m_h^n,\grad(\m_h^n\cdot\v_h)\r_h+\Ep(\v_h)\notag\\
  &-\l \grad((\tilde{\m}_h^{n+1} - \m_h^n)\cdot\m_h^n ),\grad(\m_h^n\cdot\v_h)\r_h\notag\\
  &+\l\grad I_h(\m(t_{n+1}) - \m(t_n)),\grad I_h(I_h\m(t_{n})(I_h\m(t_{n})\cdot\v_h))\r_h\notag\\
  &-\l\grad I_h(\m(t_{n+1}) - \m(t_n)),\grad( I_h\m(t_{n})(I_h\m(t_{n})\cdot\v_h))\r_h\notag\\
  &-\l\grad (\tilde{\m}_h^{n+1} - \m_h^n),\grad( I_h(\m_h^n(\m_h^n\cdot\v_h)) - \m_h^n(\m_h^n\cdot\v_h))\r_h\notag\\
  \eqqcolon&\Ep_5(\v_h)+\Ep_6(\v_h)+\Ep_7(\v_h)+\Ep_8(\v_h)+\Ep(\v_h).\label{a3}
\end{align}

Substituting $\v_h = \tilde{\e}_{h}^{n+1}$ into \eqref{a3}, invoking Lemma \ref{lemma4} with $\e_h^n$, we obtain:
\begin{align}
  &\frac{\norm{\tilde{\e}_h^{n+1}}_h^2-\norm{\e_h^{n}}_h^2}{2\Delta t}+\norm{\grad \tilde{\e}_h^{n+1}}_h^2
  \le \sum_{i=5}^8\Ep_i(\tilde{\e}_{h}^{n+1})+\Ep(\tilde{\e}_h^{n+1}).\label{d1}
\end{align}
Using \eqref{n8}, \eqref{t1}, Lemma \ref{lemma10} and the Young inequality, we have
\begin{align*}
  &\Ep_5(\tilde{\e}_{h}^{n+1})\\
  \le&\l \grad \e_h^n\cdot\grad I_h\m(t_{n+1}) , I_h\m(t_{n})\cdot\tilde{\e}_h^{n+1} \r_h+\l \grad \m_h^n\cdot\grad \tilde{\e}_{h}^{n+1} , I_h\m(t_{n})\cdot\tilde{\e}_h^{n+1} \r_h \notag\\
  &+ \l \grad\m_h^n\cdot\grad (I_h\m(t_{n+1}) - \tilde\e_h^{n+1}) ,\e_h^n\cdot\tilde{\e}_h^{n+1}\r_h\notag\\
  \le&C\norm{\grad \e_h^n}_h\norm{\grad I_h\m(t_{n+1})}_{L_h^\infty}\norm{I_h\m(t_{n})}_{L^\infty_h}\norm{\tilde{\e}_h^{n+1}}_h\\
  &+\norm{\grad\m_h^n}_{L_h^4}\norm{\grad\tilde\e_h^{n+1}}_h\norm{I_h\m(t_{n})}_{L^\infty_h}\norm{\tilde\e_h^{n+1}}_{L_h^4}\\
  &+\norm{\grad\m_h^n}_{L_h^4}\l\norm{\grad I_h\m(t_{n+1})}_{L_h^\infty}\norm{\e_h^n}_h+\norm{\tilde\e_h^{n+1}}_h\norm{\e_h^n}_{L_h^\infty}\r\norm{\tilde\e_h^{n+1}}_{L_h^4}\\
  \le&C\l \norm{\tilde{\e}_{h}^{n+1}}_h^2+\norm{\e_{h}^{n}}_h^2+\norm{\tilde\e_h^{n+1}}_{L_h^4}^2\r+\epsilon \l\norm{\grad\tilde{\e}_{h}^{n+1}}_h^2+\frac{1}{C_6}\norm{\grad\e_h^n}_h^2\r\\
  \le&C\l \norm{\tilde{\e}_{h}^{n+1}}_h^2+\norm{\tilde{\e}_{h}^{n}}_h^2\r+2\epsilon \l\norm{\grad\tilde{\e}_{h}^{n+1}}_h^2+\norm{\grad\tilde\e_h^n}_h^2\r,
\end{align*}
where we used 
\begin{equation*}
\norm{\grad I_h\m(t_{n+1})}_{L_h^\infty}+\norm{ I_h\m(t_{n+1})}_{L_h^\infty}\le C\norm{\m(t_{n+1})}_{W^{1,\infty}}<C.
\end{equation*} 
A similar procedure yields
\begin{align*}
  &\Ep_6(\tilde{\e}_{h}^{n+1})\\
  =&\l (\e_h^n - \tilde\e_h^{n+1})\cdot\grad I_h\m(t_{n}), \grad( I_h\m(t_{n})\cdot\tilde{\e}_{h}^{n+1})\r_h\notag\\
  &+\l (I_h\m(t_n) -\e_h^n - I_h\m(t_{n+1}) + \tilde\e_h^{n+1})\cdot\grad \e_h^n, \grad( I_h\m(t_{n})\cdot\tilde{\e}_{h}^{n+1})\r_h\notag\\
  &+\l (\m_h^n - (I_h\m(t_{n+1}) - \tilde\e_h^{n+1}))\cdot\grad \m_h^n, \grad( \e_h^n\cdot\tilde{\e}_{h}^{n+1})\r_h\notag\\
  \le&C\norm{\e_h^n - \tilde\e_h^{n+1}}_h\norm{\grad I_h\m(t_n)}_{L_h^\infty}\l \norm{\tilde{\e}_{h}^{n+1}}_h + \norm{\grad\tilde{\e}_{h}^{n+1}}_h\r\\
  &+\norm{I_h(\m(t_n) - \m(t_{n+1}))}_{L_h^4}\norm{\grad \e_h^n}_{L_h^4} \l \norm{\tilde{\e}_{h}^{n+1}}_h + \norm{\grad\tilde{\e}_{h}^{n+1}}_h\r\\
  &+\l\norm{\e_h^n}_{L_h^4}+\norm{\tilde\e_h^{n+1}}_{L_h^4}\r\norm{\grad \e_h^n}_{L_h^4} \l \norm{\tilde{\e}_{h}^{n+1}}_h + \norm{\grad\tilde{\e}_{h}^{n+1}}_h\r\\
  &+\norm{\m_h^n - I_h\m(t_{n+1})}_{L_h^\infty}\norm{\grad\m_h^n}_{L_h^4}\l \norm{\grad\e_h^n}_h\norm{\tilde\e_h^{n+1}}_{L_h^4} + \norm{\e_h^n}_{L_h^4}\norm{\grad\tilde\e_h^{n+1}}_h\r\\
  &+\norm{\tilde\e_h^{n+1}}_{L_h^4}\norm{\grad\m_h^n}_{L_h^4}\l \norm{\grad \e_h^n}_{L^4_h}\norm{\tilde{\e}_{h}^{n+1}}_{L^4_h} + \norm{\e_h^n}_{L^\infty_h}\norm{\grad\tilde{\e}_{h}^{n+1}}_h\r\\
  \le&C\l\Delta t^2+\norm{\e_{h}^{n}}_h^2+\norm{\tilde{\e}_{h}^{n+1}}_h^2+\norm{\tilde{\e}_{h}^{n+1}}_{L_h^4}^2+\norm{\e_{h}^{n}}_{L_h^4}^2\r+\epsilon (\norm{\grad\tilde{\e}_{h}^{n+1}}_h^2+\frac{1}{C_6}\norm{\grad\e_h^n}_h^2)\\
  \le&C\l\Delta t^2+\norm{\tilde\e_{h}^{n}}_h^2+\norm{\tilde{\e}_{h}^{n+1}}_h^2\r+2\epsilon \l\norm{\grad\tilde{\e}_{h}^{n+1}}_h^2+\norm{\grad\tilde\e_h^n}_h^2\r.
\end{align*}

Next, since $(\tilde{\m}_h^{n+1}(\z) - \m_h^n(\z))\cdot\m_h^n(\z)=0$ holds for all $\z\in\mathcal{Q}$, on the element $K=\prod_{i=1}^d[x_{K,i}^-,x_{K,i}^+]$, we have
\begin{align*}
  0=&
  (\tilde{\m}_h^{n+1} - \m_h^n)(\z_{K,i,\alpha'}^+)\cdot \m_h^n(\z_{K,i,\alpha'}^+) - (\tilde{\m}_h^{n+1} - \m_h^n)(\z_{K,i,\alpha'}^-)\cdot \m_h^n(\z_{K,i,\alpha'}^-)\\
  =&h_{K,i}\l \mathfrak D_{K,i}^{\alpha'}(\tilde{\m}_h^{n+1} - \m_h^n)\cdot\m_h^n(\z_{K,i,\alpha'}^-) + (\tilde{\m}_h^{n+1} - \m_h^n)(\z_{K,i,\alpha'}^-)\cdot\mathfrak D_{K,i}^{\alpha'}\m_h^n\r\\
  &+ h_{K,i}^2\mathfrak D_{K,i}^{\alpha'}(\tilde{\m}_h^{n+1} - \m_h^n)\cdot\mathfrak D_{K,i}^{\alpha'}\m_h^n,
\end{align*}
where we have noted that
\begin{align*}
  (\tilde{\m}_h^{n+1} - \m_h^n)(\z_{K,i,\alpha'}^+)&=(\tilde{\m}_h^{n+1} - \m_h^n)(\z_{K,i,\alpha'}^-)+h_{K,i}\mathfrak D_{K,i}^{\alpha'}(\tilde{\m}_h^{n+1} - \m_h^n)\\
  \m_h^n(\z_{K,i,\alpha'}^+)&=\m_h^n(\z_{K,i,\alpha'}^-)+h_{K,i}\mathfrak D_{K,i}^{\alpha'}\m_h^n.
\end{align*}
Hence
\begin{align*}
 &\mathfrak D_{K,i}^{\alpha'}(\tilde{\m}_h^{n+1} - \m_h^n)\cdot\m_h^n(\z_{K,i,\alpha'}^-) + (\tilde{\m}_h^{n+1} - \m_h^n)(\z_{K,i,\alpha'}^-)\cdot\mathfrak D_{K,i}^{\alpha'}\m_h^n\\
 =&-h_{K,i}\mathfrak D_{K,i}^{\alpha'}(\tilde{\m}_h^{n+1} - \m_h^n)\cdot\mathfrak D_{K,i}^{\alpha'}\m_h^n,
\end{align*}
which means that 
\[
  \partial_i(((\tilde{\m}_h^{n+1} - \m_h^n)\cdot\m_h^n)|_K)
  (\z_{K,i,\alpha'}^-)
  =
  -h_{K,i}
  \mathfrak D_{K,i}^{\alpha'}(\tilde{\m}_h^{n+1} - \m_h^n)
  \cdot
  \mathfrak D_{K,i}^{\alpha'}\m_h^n.
\]
Similarly,
\[
  \partial_i(((\tilde{\m}_h^{n+1} - \m_h^n)\cdot\m_h^n)|_K)
  (\z_{K,i,\alpha'}^+)
  =
  h_{K,i}
  \mathfrak D_{K,i}^{\alpha'}(\tilde{\m}_h^{n+1} - \m_h^n)
  \cdot
  \mathfrak D_{K,i}^{\alpha'}\m_h^n.
\]
We thus have for every $\z\in\mathcal Q_K$,
\[
  \left|
  \partial_i\l((\tilde{\m}_h^{n+1} - \m_h^n)\cdot\m_h^n)|_K\r(\z)
  \right|
  \le
  h_{K,i}
  \left|
  \partial_i((\tilde{\m}_h^{n+1} - \m_h^n)|_K)(\z)
  \right|
  \left|
  \partial_i(\m_h^n|_K)(\z)
  \right|,
\]
which along with the H\"older inequality, Lemma \ref{lemma10} and \eqref{e7}, means that
\begin{align*}
  &\Ep_7(\tilde{\e}_{h}^{n+1})\\
  =&\l \grad ((\tilde{\m}_h^{n+1} - \m_h^n)\cdot\m_h^n),\grad(\m_h^n\cdot\tilde\e_h^{n+1})\r_h\\
  =&\sum_{K\in\mathscr{T}_h}\sum_{\z\in\mathcal{Q}_K}\omega_{K,\z}\sum_{i=1}^d\partial_i\l((\tilde{\m}_h^{n+1} - \m_h^n)\cdot\m_h^n)|_K\r(\z)\cdot\partial_i\l(\m_h^n\cdot\tilde\e_h^{n+1})|_K\r(\z)\\
  \le&Ch\norm{\grad(\tilde{\m}_h^{n+1} - \m_h^n)}_h\norm{\grad\m_h^n}_{L_h^4}\norm{\grad(\m_h^n\cdot\tilde\e_h^{n+1})}_{L_h^4}\\
  \le&C\norm{\grad(\tilde{\m}_h^{n+1} - \m_h^n)}_h\norm{\grad\m_h^n}_{L_h^4}\norm{\m_h^n}_{L_h^\infty}\norm{\tilde\e_h^{n+1}}_{L^4_h}\\
  \le&C\norm{\grad(I_h\m(t_{n+1}) - \tilde\e_h^{n+1} - I_h\m(t_n) + \e_h^n)}_h\norm{\tilde\e_h^{n+1}}_{L^4_h}\\
  \le&C(\Delta t^2 + \norm{\tilde\e_h^{n}}_h^2 + \norm{\tilde\e_h^{n+1}}_h^2) + \epsilon\l \norm{\grad\tilde\e_h^{n+1}}_h^2 + \norm{\grad\tilde\e_h^n}_h^2\r.
\end{align*}
By the same argument as that used to derive \eqref{y7}, we obtain
\begin{align}\label{u8}
  \norm{\widetilde\Delta_h I_h\m(t_{n+1})}_h
  \le C\norm{\m(t_{n+1})}_{H^2}
  \le C .
\end{align}
Using \eqref{y7}, \eqref{u7} with $\F_h^1 = \m_h^n(\m_h^n\cdot\tilde\e_h^{n+1})$ and \eqref{u8}, we have
\begin{align*}
  &\Ep_8(\tilde\e_h^{n+1})\\
  =&\l\grad I_h(\m(t_{n+1}) - \m(t_n)),\grad\l I_h(I_h\m(t_{n})(I_h\m(t_{n})\cdot\tilde\e_h^{n+1})) \r\r_h\\
  &-\l\grad I_h(\m(t_{n+1}) - \m(t_n)),\grad\l I_h(\m_h^n(\m_h^n\cdot\tilde\e_h^{n+1}))\r\r_h\\
  &-\l\grad I_h(\m(t_{n+1}) - \m(t_n)),\grad\l I_h\m(t_{n})(I_h\m(t_{n})\cdot\tilde\e_h^{n+1}) - \m_h^n(\m_h^n\cdot\tilde\e_h^{n+1}) \r\r_h\\
  &+\l\grad (\tilde\e_h^{n+1} - \e_h^n),\grad\l I_h(\m_h^n(\m_h^n\cdot\tilde\e_h^{n+1})) - \m_h^n(\m_h^n\cdot\tilde\e_h^{n+1}) \r\r_h\\
  =&-\l\tilde\Delta_h I_h(\m(t_{n+1}) - \m(t_n)),  \e_h^n(I_h\m(t_n)\cdot\tilde\e_h^{n+1}) + \m_h^n(\e_h^n\cdot\tilde\e_h^{n+1})\r_h\\
  &-\l\grad I_h(\m(t_{n+1})-\m(t_n)),\grad \l \e_h^n(I_h\m(t_n)\cdot\tilde\e_h^{n+1})+\m_h^n(\e_h^n\cdot\tilde\e_h^{n+1}) \r\r_h\\
  &+\sum_{K\in\mathscr{T}_h}\sum_{i=1}^d\l \partial_i(\tilde\e_h^{n+1} - \e_h^n), \partial_i\l I_h(\m_h^n(\m_h^n\cdot\tilde\e_h^{n+1})) - \m_h^n(\m_h^n\cdot\tilde\e_h^{n+1}) \r\r_{K,h}\\
  \le&C\norm{\tilde\Delta_h I_h(\m(t_{n+1}) - \m(t_n))}_h\norm{\e_h^n}_{L^\infty_h}(\norm{I_h\m(t_n)}_{L_h^\infty}+\norm{\m_h^n}_{L_h^\infty})\norm{\tilde\e_h^{n+1}}_h\\
  &+C\norm{\grad I_h(\m(t_{n+1})-\m(t_n))}_{L_h^4}\l\norm{\grad\e_h^n}_{L_h^4}\norm{\tilde\e_h^{n+1}}_h + \norm{\e_h^n}_{L_h^\infty}\norm{\tilde\e_h^{n+1}}_{H^1_h} \r\\
  &+\sum_{K\in\mathscr{T}_h}\sum_{i=1}^d\l |\partial_i(\tilde\e_h^{n+1} - \e_h^n)|, \frac{h_{K,i}}{2}|\partial_{ii}(\m_h^n(\m_h^n\cdot\tilde\e_h^{n+1}))|\r_{K,h}\\
  &+\sum_{K\in\mathscr{T}_h}\sum_{i=1}^d\l |\partial_i(\tilde\e_h^{n+1} - \e_h^n)|, \frac{h_{K,i}^2}{6}|\partial_{iii}(\m_h^n(\m_h^n\cdot\tilde\e_h^{n+1}))|\r_{K,h}\\
  \le&C\l\Delta t^2 + \norm{\tilde\e_h^{n+1}}_h^2 \r + \epsilon \norm{\grad \tilde\e_h^{n+1}}_h^2 \\
  &+C\sum_{K\in\mathscr{T}_h}\norm{\grad(\tilde\e_h^{n+1} - \e_h^n)}_{L_h^2(K)}\norm{\grad \m_h^n}_{L^4_h(K)}\norm{\m_h^n}_{L^\infty_h(K)}\norm{\tilde\e_h^{n+1}}_{L^4_h(K)}\\
  \le&C\l\Delta t^2+\norm{\tilde\e_h^{n+1}}_h^2 + \norm{\tilde\e_h^{n}}_h^2 \r + 2\epsilon\l \norm{\grad \tilde\e_h^{n+1}}_h^2 + \norm{\grad\tilde\e_h^n}_h^2\r.
\end{align*}

Combining the estimates of $\Ep_i(\tilde\e_h^{n+1})$ for $i=5, 6, 7, 8$ and $\Ep(\tilde\e_h^{n+1})$ in \eqref{d1}, multiplying by $2\Delta t$, summing over time levels, and choosing $\epsilon$ small enough, we obtain
\begin{align*}
  &\norm{ \tilde{\e}_h^{n+1}}_h^2 +\Delta t\sum_{l=0}^n\norm{\grad \tilde\e_h^{l+1}}_h^2
  \le C\Delta t\sum_{l=0}^n \norm{\tilde{\e}_h^{l+1}}_h^2+C(\Delta t^2+h^4).
\end{align*} 
The discrete Gronwall inequality and \eqref{b3} lead to
\begin{align}\label{a5}
  &\max_{0\le l\le n+1}\norm{ \tilde{\e}_h^{l}}^2+\Delta t\sum_{l=1}^{n+1}\norm{\grad \tilde\e_h^{l}}_h^2\le C(\Delta t^2+h^{4}).
\end{align}
\subsection{ \texorpdfstring{Proof of \eqref{e1} for $k = n+1$}{Proof of (4.15) for k = n+1}}
From the error equation \eqref{a3}
we have
\begin{align}\label{y1}
  &\l \tilde\Delta_h \tilde{\e}_h^{n+1} ,  \v_h\r_h = \l\frac{\tilde{\e}_h^{n+1}-\e_h^{n}}{\Delta t},\v_h \r_h-\sum_{i=5}^8\Ep_i(\v_h)-\Ep(\v_h).
\end{align}
Using \eqref{a5}, the estimation \eqref{c5} for $\Ep(\v_h)$, the inverse inequality, \eqref{n8} and equivalence \eqref{p7}, we have for sufficiently small $\Delta t$ and $h$
\begin{align*}
  &\l\frac{\tilde{\e}_h^{n+1}-\e_h^{n}}{\Delta t},\v_h \r_h - \Ep(\v_h) - \Ep_5(\v_h)\\
  \le&C\Delta t^{-1}(\Delta t+h^2)\norm{\v_h}_h+C(\Delta t+h)\norm{\v_h}_h\\
  &+C\norm{\grad I_h\m(t_n)}_{L_h^4}\norm{\grad I_h\m(t_n)}_{L_h^4}\norm{I_h\m(t_n)}_{L_h^\infty}\norm{\v_h}_h\notag\\
  &+C\norm{\grad \m_h^n}_{L_h^4}\norm{\grad \tilde\m_h^{n+1}}_{L_h^4}\norm{\m_h^n}_{L_h^\infty}\norm{\v_h}_h\\
  \le& C(1+\Delta t^{-1}h^2+\norm{\grad\tilde\e_h^{n+1}}_{L^4})\norm{\v_h}_h.
\end{align*}
Next, we split $\Ep_6(\v_h)$ as follows:
\begin{align*}
  &\Ep_6(\v_h)\\
  =&-\l\grad I_h\m(t_{n}), \grad\l (I_h\m(t_{n+1}) - I_h\m(t_{n}))( I_h\m(t_{n})\cdot\v_h)\r\r_h\\
  &+\l\grad I_h\m(t_{n}), ( I_h\m(t_{n})\cdot\v_h)\grad(I_h\m(t_{n+1}) - I_h\m(t_{n}))\r_h\\
  &+\l\grad\m_h^n,\grad\l(\tilde{\m}_h^{n+1} - \m_h^n)(\m_h^n\cdot\v_h)\r\r_h-\l\grad\m_h^n,(\m_h^n\cdot\v_h)\grad(\tilde{\m}_h^{n+1} - \m_h^n)\r_h\\
  \eqqcolon&\Ep_{6,1}(\v_h)+\Ep_{6,2}(\v_h)+\Ep_{6,3}(\v_h)+\Ep_{6,4}(\v_h).
\end{align*}
First, using \eqref{r7}, the inverse inequality and the interpolation error estimate, we have
\begin{align*}
  &\Ep_{6,1}(\v_h)\\
  =&-\Ep_{\rm quad}\l\grad I_h\m(t_{n}), \grad\l (I_h\m(t_{n+1}) - I_h\m(t_{n}))( I_h\m(t_{n})\cdot\v_h)\r\r\\
  &-\l\grad (I_h\m(t_{n}) - \m(t_n)), \grad\l (I_h\m(t_{n+1}) - I_h\m(t_{n}))( I_h\m(t_{n})\cdot\v_h)\r\r\\
  &+\l\Delta\m(t_n),  (I_h\m(t_{n+1}) - I_h\m(t_{n}))( I_h\m(t_{n})\cdot\v_h)\r\\
  \le&Ch^2\sum_{K\in\mathscr{T}_h}\sum_{i=1}^d\norm{\partial_i^2\l \grad I_h\m(t_{n})\cdot\grad\l (I_h\m(t_{n+1}) - I_h\m(t_{n}))( I_h\m(t_{n})\cdot\v_h)\r \r}\\
  &+Ch\norm{\m(t_n)}_{H^2}\norm{\grad\l(I_h\m(t_{n+1}) - I_h\m(t_{n}))( I_h\m(t_{n})\cdot\v_h)\r}\\
  &+C\norm{\m(t_n)}_{H^2}\norm{(I_h\m(t_{n+1}) - I_h\m(t_{n}))( I_h\m(t_{n})\cdot\v_h)}\\
  \le&C\l \norm{I_h\m(t_n)}_{H^2}\norm{I_h\m(t_{n+1}) - I_h\m(t_n)}_{W^{1,\infty}}\norm{I_h\m(t_{n})}_{W^{1,\infty}}\r\norm{\v_h}_h\\
  &+C\norm{\m(t_n)}_{H^2}\norm{I_h\m(t_{n+1}) - I_h\m(t_{n})}_{L^\infty}\norm{I_h\m(t_{n})}_{L^\infty}\norm{\v_h}_h\\
  \le&C\norm{\v_h}_h.
\end{align*}
Next, using \eqref{u7} with $\F^1_h = (\tilde{\m}_h^{n+1} - \m_h^n)(\m_h^n\cdot\v_h)$, \eqref{b1}, \eqref{n8} and the inverse inequality, we have
\begin{align*}
  &\Ep_{6,3}(\v_h) \\
  =&\l\grad\m_h^n,\grad\l(\tilde{\m}_h^{n+1} - \m_h^n)(\m_h^n\cdot\v_h) - I_h((\tilde{\m}_h^{n+1} - \m_h^n)(\m_h^n\cdot\v_h))\r\r_h\\
  &-\l\tilde\Delta_h\m_h^n,(\tilde{\m}_h^{n+1} - \m_h^n)(\m_h^n\cdot\v_h)\r_h\\
  \le&\sum_{K\in\mathscr{T}_h}\sum_{i=1}^d\l |\partial_i \m_h^n|, \frac{h_{K,i}}{2}|\partial_{ii}((\tilde{\m}_h^{n+1} - \m_h^n)(\m_h^n\cdot\v_h))|\r_{K,h}\\
  &+\sum_{K\in\mathscr{T}_h}\sum_{i=1}^d\l |\partial_i \m_h^n|, \frac{h_{K,i}^2}{6}|\partial_{iii}(  (\tilde{\m}_h^{n+1} - \m_h^n)(\m_h^n\cdot\v_h))|\r_{K,h}\\
  &+\norm{\tilde\Delta_h\m_h^n}_h\norm{I_h\m(t_{n+1}) - \tilde\e_h^{n+1} - I_h\m(t_n) + \e_h^n}_{L_h^\infty}\norm{\m_h^n}_{L_h^\infty}\norm{\v_h}_h\\
  \le&\sum_{K\in\mathscr{T}_h}\sum_{i=1}^d\norm{\grad\m_h^n}_{L_h^4(K)}\l\norm{\grad(\tilde\m_h^{n+1} - \m_h^{n})}_{L_h^4(K)}\norm{\m_h^n}_{L_h^\infty(K)}\r\norm{\v_h}_h\\
  &+C\l\Delta t+\norm{\tilde\e_h^{n+1}}_{L_h^\infty}+\norm{\e_h^{n}}_{L_h^\infty}\r\norm{\tilde\Delta_h\m_h^n}_h\norm{\v_h}_h\\
  \le&C\l1+\norm{\grad\tilde\e_h^{n+1}}_{L^4}\r\norm{\v_h}_h+C\l\norm{\tilde\e_h^{n+1}}_{L_h^\infty}+(\Delta t+h^2)^\frac{1}{4}\r\norm{\tilde\Delta_h\m_h^n}_h\norm{\v_h}_h.
\end{align*}
Furthermore, using the H\"older inequality and \eqref{n8} , it is easy to see that
\begin{align*}
  \Ep_{6,2}(\v_h)+\Ep_{6,4}(\v_h)\le C(1+\norm{\grad\tilde\e_h^{n+1}}_{L^4})\norm{\v_h}_h.
\end{align*}
As a result, we have
\begin{align*}
  \Ep_6(\v_h)
  \le C(\norm{\tilde\e_h^{n+1}}_{L_h^\infty}+(\Delta t+h^2)^\frac{1}{4})\norm{\tilde\Delta_h\m_h^n}_h\norm{\v_h}_h+C(1+\norm{\grad\tilde\e_h^{n+1}}_{L^4})\norm{\v_h}_h.
\end{align*}
Similarly to the estimate of $\Ep_7(\tilde{\e}_{h}^{n+1})$, using the equivalence  \eqref{p7} together with the inverse inequality, we have
\begin{align*}
  \Ep_7(\v_h)
  \le&Ch\norm{\grad(\tilde{\m}_h^{n+1} - \m_h^n)}_{L_h^4}\norm{\grad\m_h^n}_{L_h^4}\l\norm{\grad\m_h^n}_{L_h^\infty}\norm{\v_h}_h+\norm{\m_h^n}_{L^\infty_h}\norm{\grad \v_h}_h\r\\
  \le&C\norm{\grad(I_h\m(t_{n+1}) - \tilde\e_h^{n+1} - I_h\m(t_n) + \e_h^n)}_{L^4}\norm{\m_h^n}_{L_h^\infty}\norm{\v_h}_h\\
  \le&C(1+\Delta t+\norm{\grad\tilde\e_h^{n+1}}_{L^4})\norm{\v_h}_h  \le C(1+\norm{\grad\tilde\e_h^{n+1}}_{L^4})\norm{\v_h}_h.
\end{align*}
Finally, using \eqref{u7} with $\F^1_h = \m_h^n(\m_h^n\cdot\v_h)$ again and the discrete inverse inequality, we have
\begin{align*}
  &\Ep_8(\v_h)\\
  \le&\sum_{K\in\mathscr{T}_h}\sum_{i=1}^d\l \l |\partial_iI_h(\m(t_{n+1}) - \m(t_n))|, \frac{h_{K,i}}{2}|\partial_{ii}(I_h\m(t_{n})(I_h\m(t_{n})\cdot\v_h))|\r_{K,h}\\
  &+\l |\partial_iI_h(\m(t_{n+1}) - \m(t_n))|,\frac{h_{K,i}^2}{6} |\partial_{iii}(I_h\m(t_{n})(I_h\m(t_{n})\cdot\v_h))|\r_{K,h}\\
  &+\l |\partial_i(\tilde\m_h^{n+1} - \m_h^n)|, \frac{h_{K,i}}{2}|\partial_{ii}(\m_h^n(\m_h^n\cdot\v_h))|+\frac{h_{K,i}^2}{6} |\partial_{iii}(\m_h^n(\m_h^n\cdot\v_h))|\r_{K,h}\r\\
  \le&C\l \norm{\grad I_h(\m(t_{n+1}) - \m(t_n))}_{L^4_h}\norm{\grad I_h\m(t_{n})}_{L^4_h}\norm{I_h\m(t_{n})}_{L^\infty_h(K)}\norm{\v_h}_h\\
  &+\norm{\grad(\tilde\m_h^{n+1} - \m_h^n)}_{L^4_h}\norm{\grad\m_h^n}_{L^4_h}\norm{\m_h^n}_{L^\infty_h}\norm{\v_h}_h\r\\
  \le&C(1+\norm{\grad\tilde\e_h^{n+1}}_{L^4})\norm{\v_h}_h.
\end{align*}

Combining the above estimations in \eqref{y1}, using \eqref{d4}, \eqref{b5} and \eqref{p2}, we derive
\begin{align*}
  &\norm{\tilde\Delta_h \tilde{\e}_h^{n+1}}_h
  = \sup_{\v_h\in\S_h^1 ,\v_h\neq 0}\frac{\left|\l \tilde\Delta_h \tilde{\e}_h^{n+1}, \v_h\r_h\right|}{\norm{\v_h}_h}\\
  \le& C(1+\Delta t^{-1}h^2+\norm{\grad \tilde{\e}_h^{n+1}}_{L^4})+C(\norm{\tilde\e_h^{n+1}}_{L^\infty}+(\Delta t+h^2)^\frac{1}{4})\norm{\tilde\Delta_h\m_h^n}_h\\
  \le&C\l 1+\Delta t^{-1}h^2+\norm{\tilde{\e}_h^{n+1}}^{\frac{1}{8}}\norm{\Delta_h \tilde{\e}_h^{n+1}}^{\frac{7}{8}}\r+C(\Delta t+h^2)^\frac{1}{4}\norm{\tilde\Delta_h\m_h^n}_h\\
  &+C\norm{\tilde\e_h^{n+1}}^{1-\frac{d}{4}}(\norm{\tilde\e_h^{n+1}}+\norm{\Delta_h\tilde\e_h^{n+1}})^{\frac{d}{4}}\norm{\tilde\Delta_h\m_h^n}_h\\
  \le&C\l 1+\Delta t^{-1}h^2+\norm{\tilde{\e}_h^{n+1}}\r+C(\Delta t+h^2)^\frac{1}{4}\norm{\tilde\Delta_h\m_h^n}_h\\
  &+C\norm{\tilde\e_h^{n+1}}\norm{\tilde\Delta_h\m_h^n}_h^{\frac{4}{4-d}}+\frac{\epsilon}{C_5}\norm{\tilde\Delta_h\tilde\e_h^{n+1}}.
\end{align*}
By the \eqref{p6} and note that \(d\le3\), selecting sufficiently small $\Delta t$, $h$ such that 
\begin{align*}
(\Delta t+h^2)^\frac{1}{4}\norm{\tilde\Delta_h\m_h^n}_h\le& (\Delta t+h^2)^\frac{1}{4}C_0(M)\le 1,
\end{align*}
then using \eqref{a5}, for sufficiently small $\epsilon$ and a positive constant $C_7>0$ independent of $M$, we have
\begin{align}\label{h5}
  \norm{\tilde\Delta_h \tilde{\e}_h^{n+1}}_h
  \le C_7(1+\Delta t^{-1}h^2).
\end{align}
On the other hand, by the definition of $\tilde\Delta_h$, we have
\begin{align*}
  \norm{\tilde\Delta_h\tilde{\e}_h^{n+1}}_h^2= -\l \grad\tilde{\e}_h^{n+1}, \grad\tilde\Delta_h\tilde{\e}_h^{n+1} \r_h\le C\norm{\grad\tilde{\e}_h^{n+1}}_h\norm{\grad\tilde\Delta_h\tilde{\e}_h^{n+1}}_h,
\end{align*}
which combines with the discrete inverse inequality and \eqref{a5}, we have 
\begin{align}\label{h6}
\norm{\tilde\Delta_h\tilde{\e}_h^{n+1}}_h\le Ch^{-2}\norm{\tilde{\e}_h^{n+1}}_h\le C_8(1+\Delta th^{-2}),
\end{align}
where $C_8$ independent of $M$.

Now, letting $M \coloneqq 2\max\{C_7, C_8\}$, combining \eqref{h5} and \eqref{h6}, we have
\[
\norm{\tilde\Delta_h\tilde{\e}_h^{n+1}}_h\le\min\{C_7(1+\Delta t^{-1}h^2), C_8(1+\Delta th^{-2})\}\le M.
\]

Thus the induction is closed at the level \(k=n+1\). Consequently,
\eqref{t7} also holds at \(k=n+1\), and the proof of
Theorem~\ref{th1} is complete.
\section{Numerical results}\label{sec5} In this section, we present numerical results concerning the convergence rates and energy stability of the proposed scheme, as well as potential finite-time blow-up, based on the open-source finite element library FEniCSx \cite{BarattaEtal2023}.
\subsection{Convergence tests}
\begin{table}
		\tabcolsep 1.8mm
         \caption{Temporal discretization errors and rates}
		\label{table6}
		\centering
		\begin{tabular}{ccccc}
			\toprule
			$\Delta t$ & $\max\limits_{1\le k\le N} \norm{\m_{h, \Delta t}^{k}-\m_{h, \Delta t/2}^{2k}}$ & Rate  & $\sqrt{\Delta t\sum\limits_{k=1}^{N}\norm{\m_{h, \Delta t}^{k}-\m_{h, \Delta t/2}^{2k}}_{H^1}^2}$ &Rate\\
			\midrule
      \multicolumn{5}{l}{two-dimensional}\\
			\midrule
			T/32 & $1.817e$-$03$ & $-$ & $2.790e$-$03$ & $-$\\
			
			T/64 & $9.357e$-$04$ & 0.96 & $1.445e$-$03$ & 0.95 \\
			
			T/128 & $4.771e$-$04$ & 0.97 & $7.399e$-$04$& 0.97\\

			T/256 & $2.405e$-$04$ & 0.99 & $3.740e$-$04$& 0.98\\
			\midrule
      \multicolumn{5}{l}{three-dimensional}\\
			\midrule
			T/16 & $3.711e$-$03$ & $-$ & $5.526e$-$03$ & $-$\\
			
			T/32 & $1.934e$-$03$ & 0.94 & $2.883e$-$03$ & 0.94 \\
			
			T/64 & $9.897e$-$04$ & 0.97 & $1.477e$-$03$& 0.96\\

			T/128 & $5.009e$-$04$ & 0.98 & $7.485e$-$04$& 0.98\\
			\bottomrule
		\end{tabular}
	\end{table}
We consider the problem on the $d$-dimensional domain $\Omega = [1/2, 3/2]^d$, $d=2, 3$  and let $T = 0.5$. The initial value of the solution is chosen to be 
\[
\m^0 = \frac{1}{S}[ \tilde m_1^0,\tilde m_2^0,\tilde m_3^0 ]^\top,
\]
where $S(\x) = \sqrt{\tilde m_1^0(\x)^2+\tilde m_2^0(\x)^2+\tilde m_3^0(\x)^2}$ and
\begin{align*}
  &\tilde m_1^0(x, y) = \sin(\pi  x)  \cos(2\pi y) + 1,\quad
  \tilde m_2^0(x, y) = \cos(2\pi x) \cos(2\pi y) + 2,\\
  &\tilde m_3^0(x, y) = \sin(\pi y)
\end{align*}
for $d = 2$ and 
\begin{align*}
  &\tilde m_1^0(x, y, z) = \sin(\pi  x)  \cos(2\pi y) + \cos^2(\pi z) + 1,\quad
  \tilde m_2^0(x, y, z) = \cos(2\pi x) \cos(2\pi y) + 2,\\
  &\tilde m_3^0(x, y, z) = \sin(\pi y) \cos^2(\pi z) + \sin(\pi x).
\end{align*}
for $d=3$. One can verify that the initial data satisfies the pointwise constraint and the boundary condition. Fix $h = 1/24$ for computing the temporal errors and rates with different $\Delta t$, and fix $\Delta t = T/10$ for computing the spatial errors and rates for different $h$. The temporal and spatial errors and convergence rates in two and three dimensions are presented in Tables \ref{table6} and \ref{table8}, respectively. We can observe that the convergence rates, both temporal and spatial, are consistent with the theoretical result in Theorem \ref{th1}.
	\begin{table}
		\tabcolsep 2mm
         \caption{Spatial discretization errors and rates}
		\label{table8}
		\centering
		\begin{tabular}{ccccc}
			\toprule
			$h$ & $\max\limits_{1\le k\le N} \norm{\m_{h, \Delta t}^{k}-\m_{h/2, \Delta t}^{k}}$ & Rate  & $\sqrt{\Delta t\sum\limits_{k=1}^{N}\norm{\m_{h, \Delta t}^{k}-\m_{h/2, \Delta t}^{k}}_1^2}$ &Rate\\
			\midrule
      \multicolumn{5}{l}{two-dimensional}\\
			\midrule
			1/16 & $2.635e$-$03$ & $-$ & $1.724e$-$02$ & $-$\\
			
			1/32 & $6.606e$-$04$ & 2.00 &$8.627e$-$03$ & 1.00 \\
			
			1/64 & $1.653e$-$04$ & 2.00 &$4.314e$-$03$& 1.00\\

			1/128 & $4.132e$-$05$ & 2.00 & $2.157e$-$03$& 1.00\\
			\midrule
      \multicolumn{5}{l}{three-dimensional}\\
			\midrule
			1/6 & $1.903e$-$02$ & $-$ & $4.799e$-$02$ & $-$\\
			
			1/12 & $4.920e$-$03$ & 1.95 &$2.411e$-$02$ & 0.99 \\
			
			1/24 & $1.239e$-$03$ & 1.99 & $1.207e$-$02$& 1.00\\

			1/48 & $3.102e$-$04$ & 2.00 & $6.035e$-$03$& 1.00\\
			\bottomrule
		\end{tabular}
	\end{table}
\subsection{Energy stability and possible finite-time blow-up}
Let $\Omega = (-1, 1)^2$ and $T = 10$. The initial value is defined by
\begin{equation*}
\m^0(\x) = \l \frac{\x}{|\x|}\sin\phi(|\x|), \cos\phi(|\x|) \r,\quad \text{ where }\phi(|\x|)\coloneqq \left\{
\begin{aligned}
  &3\pi|\x|^2/2\quad \text{ for }|\x|\le 1,\\
  &3\pi/2\quad\ \ \quad\text{ for }|\x|\ge 1.
\end{aligned}
\right.
\end{equation*}
Fig.~\ref{fig1} shows that the discrete energy is monotonically
dissipated for all tested time steps, including the large time step
\(\Delta t=1\). 
Fig.\ \ref{fig2} presents the evolution of the $W^{1,\infty}$ semi-norm for different mesh sizes. The results indicate that $\norm{\grad\m_h}_{L^\infty}$ can approach $2\sqrt{2}h^{-1}$, suggesting possible finite-time blow-up. Meanwhile, the moment at which the maximum is attained coincides with the rapid energy decrease. These results are consistent with \cite{MR1974176, MR2318794}.
\begin{figure}
  \centering
  \includegraphics[scale = 0.2]{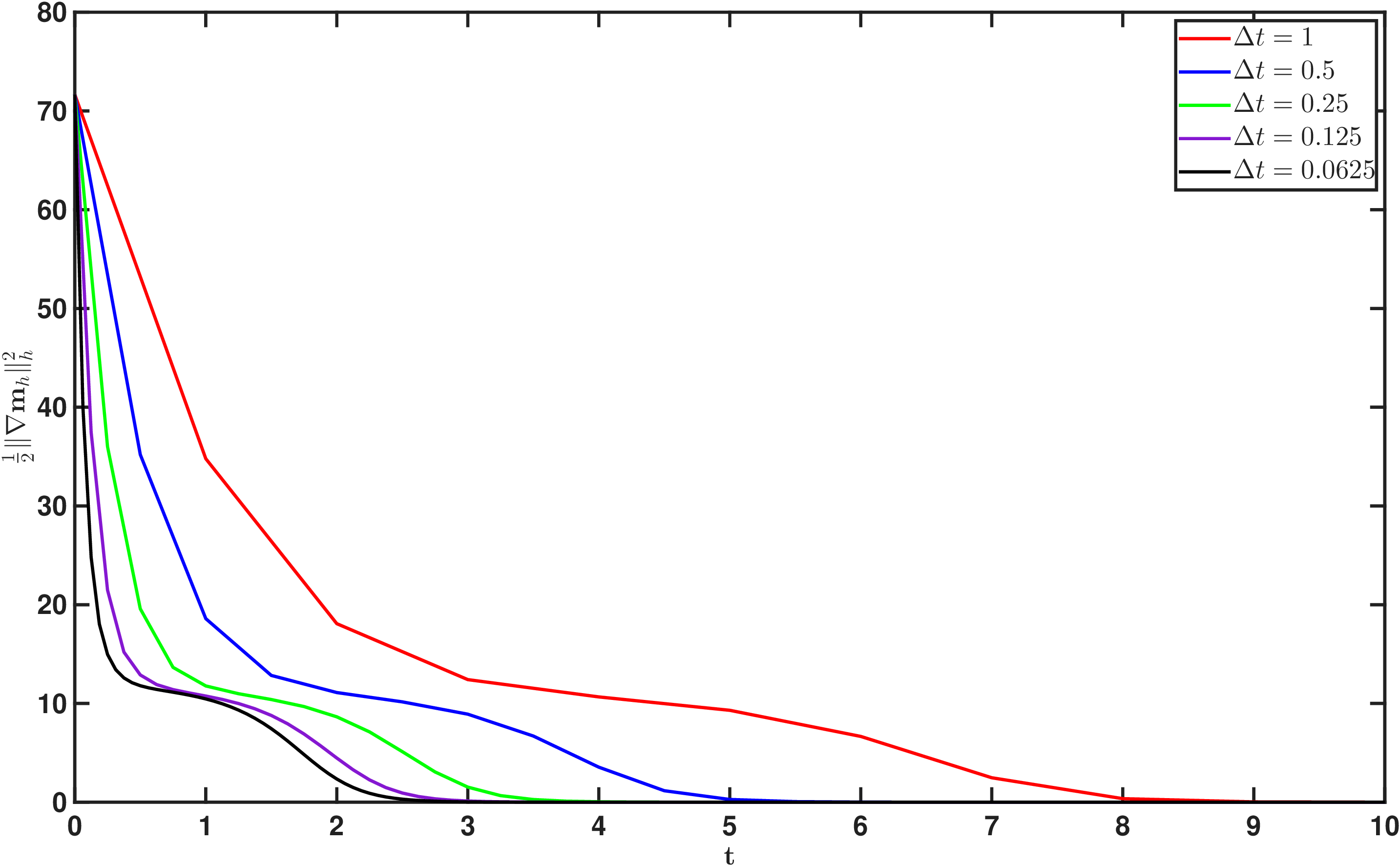}
  \caption{The evolution of the discrete energy for different $\Delta t$}\label{fig1}
\end{figure}
\begin{figure}
  \centering
  \includegraphics[scale = 0.2]{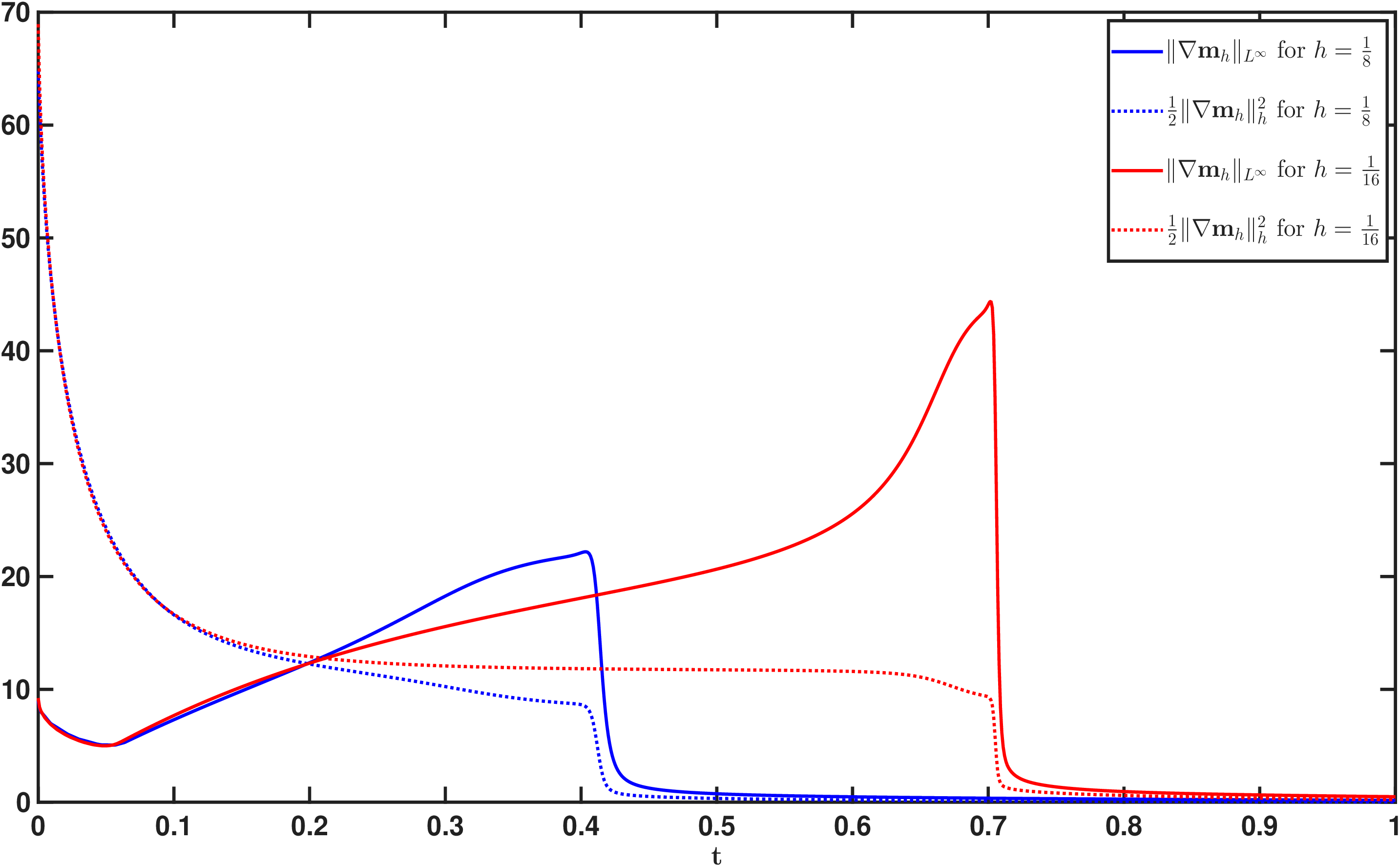}
  \caption{The evolution of $\|\grad \m_h\|_{L^\infty}$ and $\frac{1}{2}\norm{\grad \m_h}_h^2$ for different $h$.}\label{fig2}
\end{figure}
\section{Conclusion}\label{sec6}
In this work, we proposed and analyzed a linearly implicit mass-lumped
finite element projection method for the heat flow of harmonic maps. The
scheme preserves the unit-length constraint at all finite element nodes
and satisfies an unconditional discrete energy decay law. By combining
quadrature consistency estimates, edge-based cancellation identities, and
a bootstrap induction argument, we established unconditional optimal-order
error estimates without imposing any coupling condition between the time
step and the mesh size. We are currently preparing the extension to the LLG equation and higher-order schemes.
\appendix
\section{Proof of Lemma \ref{y9}}\label{t3}
Since $I_h\F_h|_K\in Q_1(K)$, by \eqref{t2} we have
\[
  \partial_i (I_h\F_h|_K)(\z_{K,i,\alpha'}^-)
  =
  \partial_i (I_h\F_h|_K)(\z_{K,i,\alpha'}^+)
  =
  \mathfrak D_{K,i}^{\alpha'}\F_h.
\]
Moreover,
\[
  \partial_i\F_h(\z_{K,i,\alpha'}^\pm)
  =
  (\mathfrak D_{K,i}^{\alpha'}\m_h^n)(\m_h^n\cdot\v_h)(\z_{K,i,\alpha'}^\pm)
  +
  \m_h^n(\z_{K,i,\alpha'}^\pm)\partial_i (\m_h^n\cdot\v_h)(\z_{K,i,\alpha'}^\pm).
\]
Therefore,
\begin{align}
&\partial_i\m_h^n(\z_{K,i,\alpha'}^+)\cdot
\l\partial_i\F_h(\z_{K,i,\alpha'}^+)-\partial_i I_h\F_h|_K(\z_{K,i,\alpha'}^+)\r\notag\\
&+\partial_i\m_h^n(\z_{K,i,\alpha'}^-)\cdot
\l\partial_i\F_h(\z_{K,i,\alpha'}^-)-\partial_i I_h\F_h|_K(\z_{K,i,\alpha'}^-)
\r\notag\\
=&
|\mathfrak D_{K,i}^{\alpha'}\m_h^n|^2\l(\m_h^n\cdot\v_h)(\z_{K,i,\alpha'}^+)+(\m_h^n\cdot\v_h)(\z_{K,i,\alpha'}^-)\r-2\mathfrak D_{K,i}^{\alpha'}\m_h^n\cdot \mathfrak D_{K,i}^{\alpha'}\F_h \notag\\
&+ \mathfrak D_{K,i}^{\alpha'}\m_h^n\cdot\l\m_h^n(\z_{K,i,\alpha'}^+)\partial_i (\m_h^n\cdot\v_h)(\z_{K,i,\alpha'}^+)+\m_h^n(\z_{K,i,\alpha'}^-)\partial_i (\m_h^n\cdot\v_h)(\z_{K,i,\alpha'}^-)\r. \label{identity-step-1}
\end{align}
Here, noting that $|\m_h^n(\z_{K,i,\alpha'}^\pm)|=1$, we have
\begin{align*}
  \m_h^n(\z_{K,i,\alpha'}^\pm)\cdot \mathfrak D_{K,i}^{\alpha'}\m_h^n
  =
  \pm\frac{h_{K,i}}{2}| \mathfrak D_{K,i}^{\alpha'}\m_h^n|^2.
\end{align*}
Using the above identity, we derive
\begin{align*}
  &\mathfrak D_{K,i}^{\alpha'}\m_h^n\cdot \mathfrak D_{K,i}^{\alpha'}\F_h\\
  =&
  \mathfrak D_{K,i}^{\alpha'}\m_h^n\cdot
  \frac{\m_h^n(\z_{K,i,\alpha'}^+) (\m_h^n\cdot\v_h)(\z_{K,i,\alpha'}^+)-\m_h^n(\z_{K,i,\alpha'}^-) (\m_h^n\cdot\v_h)(\z_{K,i,\alpha'}^-)}{h_{K,i}}\\
  =&
  \frac12 |\mathfrak D_{K,i}^{\alpha'}\m_h^n|^2
  \l(\m_h^n\cdot\v_h)(\z_{K,i,\alpha'}^+)+(\m_h^n\cdot\v_h)(\z_{K,i,\alpha'}^- )\r.
\end{align*}
Hence the first two terms on the right-hand side of
\eqref{identity-step-1} cancel and we get
\begin{align*}
&\partial_i\m_h^n(\z_{K,i,\alpha'}^+)\cdot
\l\partial_i\F_h(\z_{K,i,\alpha'}^+)-\partial_i I_h\F_h(\z_{K,i,\alpha'}^+)\r\notag\\
&+\partial_i\m_h^n(\z_{K,i,\alpha'}^-)\cdot
\l\partial_i\F_h(\z_{K,i,\alpha'}^-)-\partial_i I_h\F_h(\z_{K,i,\alpha'}^-)
\r\notag\\
=&\mathfrak D_{K,i}^{\alpha'}\m_h^n\cdot\l\m_h^n(\z_{K,i,\alpha'}^-)\partial_i (\m_h^n\cdot\v_h)(\z_{K,i,\alpha'}^-)+\m_h^n(\z_{K,i,\alpha'}^+)\partial_i (\m_h^n\cdot\v_h)(\z_{K,i,\alpha'}^+)\r.
\end{align*}
Multiplying by the quadrature weight $\omega_K$, summing over all
$i$-edges of $K$, all directions $i$, and all elements $K$, gives \eqref{t9}.

\section{Proof of Lemma \ref{lemma11}}\label{u6} We first note that
\begin{align*}
&\l \grad I_h\m(t_{n+1}) , \grad \v_h\r_h
-\l \grad \m(t_{n+1}), \grad \v_h \r \\
=&\l \grad I_h\m(t_{n+1}), \grad \v_h \r_h
-\l \grad I_h\m(t_{n+1}), \grad \v_h \r
+\l \grad (I_h\m(t_{n+1})-\m(t_{n+1})),\grad \v_h\r .
\end{align*}
The last term is estimated by the interpolation superconvergence result
\cite[Lemma 3.7]{MR4377027}:
\begin{equation*}
  \left|
  \l \grad (I_h\m(t_{n+1})-\m(t_{n+1})),\grad \v_h\r
  \right|
  \le Ch^2\norm{\v_h}_{H^1}.
\end{equation*}
It remains to prove
\begin{equation}\label{r1}
\left|
\l \grad I_h\m(t_{n+1}), \grad \v_h \r_h
-
\l \grad I_h\m(t_{n+1}), \grad \v_h \r
\right|
\le
Ch^2\norm{\m(t_{n+1})}_{H^3}\norm{\grad\v_h}.
\end{equation}

The case \(d=1\) is immediate since
\(\partial_1 I_h\m(t_{n+1})\) and \(\partial_1\v_h\) are constant on
each element and the two-point Gauss--Lobatto rule is exact. We therefore
consider \(d=2,3\). In the rest of the proof, for brevity, we write $\m:=\m(t_{n+1})$ and omit the restriction symbol on $K$. Since
\(I_h\m,\v_h\in Q_1(K)^3\), the tensor-product Gauss--Lobatto rule gives
\begin{align}\label{local-stiff-quad-id}
&(\nabla I_h\m,\nabla \v_h)_{K,h}
-(\nabla I_h\m,\nabla \v_h)_K \notag\\
=&
\sum_{i=1}^d
\sum_{\substack{j=1\\ j\ne i}}^d
\frac{h_{K,j}^2}{6}
\int_K
\partial_{ij}I_h\m\cdot\partial_{ij}\v_h\,\d\x
+
R_K(I_h\m,\v_h),
\end{align}
where \(R_K=0\) if \(d=2\). If \(d=3\), then
\begin{equation}\label{local-stiff-quad-rem}
  R_K(I_h\m,\v_h)
  =
  \sum_{i=1}^3
  \frac{h_{K,j_i}^2h_{K,\ell_i}^2}{36}
  \int_K
  \partial_{123}I_h\m\cdot\partial_{123}\v_h\,\d\x,
  \quad
  \{j_i,\ell_i\}=\{1,2,3\}\setminus\{i\}.
\end{equation}

We briefly justify \eqref{local-stiff-quad-id} and
\eqref{local-stiff-quad-rem}. For an interval \(I\), the identity
\[
  (p,q)_{I,h}-(p,q)_I
  =
  \frac{h_I^2}{6}\int_I p'q'\,\d x,
  \quad p,q\in P_1(I),
\]
is applied in the tensor-product directions. For fixed \(i\), the
integrand
\[
  \partial_i I_h\m\cdot\partial_i\v_h
\]
is independent of \(x_i\). In two dimensions there is only one transverse
direction, and this gives the corresponding term in
\eqref{local-stiff-quad-id}. In three dimensions, let
\(\{j_i,\ell_i\}=\{1,2,3\}\setminus\{i\}\), and let \(Q_r\) and \(I_r\)
denote the one-dimensional Gauss--Lobatto quadrature and the exact
integration in the \(x_r\)-direction. Then the quadrature error in the
two transverse directions is decomposed as
\[
  Q_{j_i}Q_{\ell_i}-I_{j_i}I_{\ell_i}
  =
  (Q_{j_i}-I_{j_i})I_{\ell_i}
  +
  I_{j_i}(Q_{\ell_i}-I_{\ell_i})
  +
  (Q_{j_i}-I_{j_i})(Q_{\ell_i}-I_{\ell_i}).
\]
The first two terms produce the \(h_{K,j_i}^2\)- and
\(h_{K,\ell_i}^2\)-contributions in \eqref{local-stiff-quad-id}, while
the last product term gives the corresponding contribution in
\eqref{local-stiff-quad-rem}.

Summing \eqref{local-stiff-quad-id} over all elements and using $
  \partial_{ij}I_h\m
  =
  \partial_{ij}(I_h\m-\m)+\partial_{ij}\m$,
we obtain
\begin{align*}
&\l \grad I_h\m,\grad\v_h\r_h
-
\l \grad I_h\m,\grad\v_h\r  \\
=&
\sum_{i=1}^d
\sum_{\substack{j=1\\j\ne i}}^d
\sum_{K\in\mathscr T_h}
\frac{h_{K,j}^2}{6}
\int_K
\partial_{ij}(I_h\m-\m)\cdot\partial_{ij}\v_h\,\d\x \\
&+
\sum_{i=1}^d
\sum_{\substack{j=1\\j\ne i}}^d
\sum_{K\in\mathscr T_h}
\frac{h_{K,j}^2}{6}
\int_K
\partial_{ij}\m\cdot\partial_{ij}\v_h\,\d\x
+
\sum_{K\in\mathscr T_h}R_K(I_h\m,\v_h) \\
\eqqcolon& R_1+R_2+R_3 .
\end{align*}

We estimate \(R_1,R_2,R_3\) separately. By the interpolation estimate and
the inverse inequality,
\[
  \|\partial_{ij}(I_h\m-\m)\|_{L^2(K)}
  \le Ch_K\|\m\|_{H^3(K)},
  \qquad
  \|\partial_{ij}\v_h\|_{L^2(K)}
  \le Ch_K^{-1}\|\nabla\v_h\|_{L^2(K)}.
\]
Therefore, by Cauchy's inequality,
\begin{equation*}
  |R_1|
  \le
  Ch^2\|\m\|_{H^3}\|\nabla\v_h\|.
\end{equation*}

We next estimate \(R_2\). For fixed \(i\ne j\), integrating by parts
elementwise in the \(x_i\)-direction gives
\[
\begin{aligned}
&\sum_{K\in\mathscr{T}_h}
\frac{h_{K,j}^2}{6}
\int_K
\partial_{ij}\m\cdot\partial_{ij}\v_h\,\d\x  \\
=&
-\sum_{K\in\mathscr{T}_h}
\frac{h_{K,j}^2}{6}
\int_K
\partial_{iij}\m\cdot\partial_j\v_h\,\d\x
+
\sum_{K\in\mathscr{T}_h}
\frac{h_{K,j}^2}{6}
\int_{\partial K}
n_i\partial_{ij}\m\cdot\partial_j\v_h\,\d S .
\end{aligned}
\]
We claim that the boundary terms vanish after summation over all elements. Set
\[
  B_{ij}
  :=
  \sum_{K\in\mathscr T_h}
  \frac{h_{K,j}^2}{6}
  \int_{\partial K}
  n_i\,\partial_{ij}\m\cdot\partial_j\v_h\,dS .
\]
Only faces orthogonal to \(x_i\) contribute to \(B_{ij}\), since \(n_i=0\)
on all other faces. Let \(F=K^-\cap K^+\) be an interior face orthogonal
to \(x_i\). The outward normals of \(K^-\) and \(K^+\) have opposite
\(i\)-components. Moreover, since the mesh is generated by tensor
products of one-dimensional partitions,
\[
  h_{K^-,j}=h_{K^+,j}.
\]
Because \(j\ne i\), \(\partial_j\v_h\) is the tangential derivative of
the single-valued trace of the continuous finite element function
\(\v_h\) on \(F\). Hence
\[
  \partial_j(\v_h|_{K^-})|_F
  =
  \partial_j(\v_h|_{K^+})|_F .
\]
Together with the single-valued trace of the smooth function
\(\partial_{ij}\m\), the two contributions from \(K^-\) and \(K^+\)
cancel. On an exterior face \(F\subset\partial\Omega\) orthogonal to
\(x_i\), the homogeneous Neumann condition gives
\(\partial_i\m=0\). Since \(j\ne i\), \(\partial_j\) is tangential to
\(F\), and therefore
\[
  \partial_{ij}\m
  =
  \partial_j(\partial_i\m)
  =
  0
  \quad\text{on }F
\]
in the tangential trace sense. Thus the exterior face contribution also
vanishes, and hence \(B_{ij}=0\).

Consequently,
\[
\begin{aligned}
  |R_2|
  &\le
  C h^2
  \sum_{i=1}^d
  \sum_{\substack{j=1\\j\ne i}}^d
  \sum_{K\in\mathscr T_h}
  \|\partial_{iij}\m\|_{L^2(K)}
  \|\partial_j\v_h\|_{L^2(K)}\le
  Ch^2\|\m\|_{H^3}\|\nabla\v_h\|.
\end{aligned}
\]

Finally, \(R_3=0\) if \(d=2\). If \(d=3\), the tensor-product
interpolation stability and the inverse inequality imply
\[
  \|\partial_{123}I_h\m\|_{L^2(K)}
  \le
  C\|\m\|_{H^3(K)},
  \qquad
  \|\partial_{123}\v_h\|_{L^2(K)}
  \le
  Ch_K^{-2}\|\nabla\v_h\|_{L^2(K)}.
\]
Therefore,
\[
\begin{aligned}
  |R_3|
  &\le
  Ch^4
  \sum_{K\in\mathscr T_h}
  \|\partial_{123}I_h\m\|_{L^2(K)}
  \|\partial_{123}\v_h\|_{L^2(K)}\le
  Ch^2\|\m\|_{H^3}\|\nabla\v_h\|.
\end{aligned}
\]

Combining the estimates for \(R_1,R_2,R_3\), we obtain \eqref{r1}.
Together with the preceding interpolation superconvergence estimate, this
yields
\[
  \left|
  \l \grad I_h\m(t_{n+1}),\grad\v_h\r_h
  -
  \l \grad\m(t_{n+1}),\grad\v_h\r
  \right|
  \le
  Ch^2\|\v_h\|_{H^1}.
\]
This completes the proof.
\section{Proof of Lemma \ref{lemma3}}\label{y5}
Recall that
\[
\l \tilde{\Delta}_h\v_h, \w_h \r_h
= - \l \grad \v_h, \grad \w_h \r_h,\quad
\l \Delta_h\v_h, \w_h \r
= - \l \grad \v_h, \grad \w_h \r,
\quad \forall \v_h,\w_h\in\S_h^1 .
\]
Since \(\partial_i\v_h\) and \(\partial_i\w_h\) are constant in the
\(x_i\)-direction, Lemma~\ref{lemma1} gives contributions only from the
directions \(j\ne i\). Using Cauchy's inequality, the local inverse
estimate
\[
  h_K^2\|\partial_i\partial_j\w_h\|_{L^2(K)}
  \le C\|\w_h\|_{L^2(K)},
\]
and the norm equivalence \eqref{b3}, we obtain, for any
\(\w_h\in\S_h^1\),
\begin{align}
  |(\Delta_h\v_h,\w_h)|
  &\le
  |(\tilde\Delta_h\v_h,\w_h)_h|
  +
  \left|
  \l\grad\v_h,\grad\w_h\r_h
  -
  \l\grad\v_h,\grad\w_h\r
  \right|  \notag\\
  &\le
  \norm{\tilde\Delta_h\v_h}_h\norm{\w_h}_h 
  +C
  \left(
  \sum_{K\in\mathscr T_h}
  \sum_{i=1}^d\sum_{j\ne i}
  \norm{\partial_i\partial_j\v_h}_{L^2(K)}^2
  \right)^{1/2}\norm{\w_h}_h .
  \label{aux-p2-1}
\end{align}

It remains to prove
\begin{equation}\label{aux-p2-2}
  \sum_{K\in\mathscr T_h}
  \sum_{i=1}^d\sum_{j\ne i}
  \norm{\partial_i\partial_j\v_h}_{L^2(K)}^2
  \le
  C\norm{\tilde\Delta_h\v_h}_h^2 .
\end{equation}
For \(i=1,\ldots,d\), define the directional discrete Laplacian
\(\delta_{h,i}^2:\S_h^1\to\S_h^1\) by
\[
  \l \delta_{h,i}^2\v_h,\w_h\r_h
  =
  -\l \partial_i\v_h,\partial_i\w_h\r_h,
  \qquad \forall \w_h\in\S_h^1 .
\]
Then
\[
  \tilde\Delta_h\v_h=\sum_{i=1}^d\delta_{h,i}^2\v_h .
\]

We justify the mixed-derivative identity by the tensor-product matrix
representation. Let \(M_\ell\) and \(S_\ell\) denote the one-dimensional
lumped mass matrix and stiffness matrix in the \(x_\ell\)-direction.
Then
\[
  M_h=M_1\otimes\cdots\otimes M_d,
\]
and the directional stiffness matrix corresponding to the \(x_i\)-direction is
\[
  A_i
  =
  M_1\otimes\cdots\otimes M_{i-1}
  \otimes S_i
  \otimes M_{i+1}\otimes\cdots\otimes M_d .
\]
If \(\mathbf v\) is the nodal coefficient vector of \(\v_h\), then
\(\delta_{h,i}^2\v_h\) has coefficient vector
\(-M_h^{-1}A_i\mathbf v\). Hence, for \(i\ne j\),
\[
  \l\delta_{h,i}^2\v_h,\delta_{h,j}^2\v_h\r_h
  =
  \mathbf v^\top A_iM_h^{-1}A_j\mathbf v .
\]
Since \(M_h^{-1}=M_1^{-1}\otimes\cdots\otimes M_d^{-1}\), a direct
Kronecker-product calculation gives
\[
  A_iM_h^{-1}A_j
  =
  S_i\otimes S_j
  \otimes
  \bigotimes_{\ell\ne i,j}M_\ell ,
  \qquad i\ne j,
\]
up to the natural ordering of tensor factors. The matrix on the
right-hand side is precisely the tensor-product Gauss--Lobatto matrix
for the broken norm of the mixed derivative \(\partial_i\partial_j\v_h\).
Consequently,
\begin{equation*}
  \l\delta_{h,i}^2\v_h,\delta_{h,j}^2\v_h\r_h
  =
  \|\partial_i\partial_j\v_h\|_h^2,
  \qquad i\ne j .
\end{equation*}

Now,
\begin{align}
  \norm{\tilde\Delta_h\v_h}_h^2
  &=
  \norm{\sum_{i=1}^d\delta_{h,i}^2\v_h}_h^2 =
  \sum_{i=1}^d\norm{\delta_{h,i}^2\v_h}_h^2
  +
  2\sum_{1\le i<j\le d}
  \l\delta_{h,i}^2\v_h,\delta_{h,j}^2\v_h\r_h \notag\\
  &=
  \sum_{i=1}^d\norm{\delta_{h,i}^2\v_h}_h^2
  +
  2\sum_{1\le i<j\le d}
  \norm{\partial_i\partial_j\v_h}_{h}^2 \ge
  \sum_{i=1}^d\sum_{j\ne i}
  \norm{\partial_i\partial_j\v_h}_{h}^2 .
  \label{o7}
\end{align}
By using the equivalence \eqref{b3} on each element, \eqref{o7} implies \eqref{aux-p2-2}.

Finally, taking the supremum over \(\w_h\in\S_h^1\), \(\w_h\ne0\) in
\eqref{aux-p2-1} and using \(\|\w_h\|\simeq\|\w_h\|_h\), we obtain
\[
  \|\Delta_h\v_h\|
  \le
  C\left[
  \|\tilde\Delta_h\v_h\|_h
  +
  \left(
  \sum_{K\in\mathscr T_h}
  \sum_{i=1}^d\sum_{j\ne i}
  \|\partial_i\partial_j\v_h\|_{L^2(K)}^2
  \right)^{1/2}
  \right]
  \le
  C_5\|\tilde\Delta_h\v_h\|_h .
\]
This proves \eqref{p2}.

\section{Proof of Lemma \ref{lemma10}}\label{u3}
We prove the estimate elementwise. For simplicity, we denote
\[
  \z^-:=\z_{K,i,\alpha'}^-,
  \qquad
  \z^+:=\z_{K,i,\alpha'}^+,
  \qquad
  h_i:=h_{K,i},\qquad \mathfrak D\coloneqq\mathfrak D_{K,i}^{\alpha'}
\]
Along the local edge \((\z^-,\z^+)\), we compute
\begin{align*}
  \mathfrak D\e_h^n =& \mathfrak D(I_h\m(t_n) - \m_h^n)\\
  =&\mathfrak D I_h\m(t_n) -\frac{1}{h_{i}}\l \frac {\widetilde\m_h^n(\z^+)}{|\widetilde\m_h^n(\z^+)|} - \frac {\widetilde\m_h^n(\z^+)}{|\widetilde\m_h^n(\z^-)|} + \frac {\widetilde\m_h^n(\z^+)}{|\widetilde\m_h^n(\z^-)|} - \frac {\widetilde\m_h^n(\z^-)}{|\widetilde\m_h^n(\z^-)|}\r\\
  =&\mathfrak D I_h\m(t_n) -\widetilde\m_h^n(\z^+) \mathfrak D\frac {1}{|\widetilde\m_h^n|} - \frac {\mathfrak D \widetilde\m_h^n}{|\widetilde\m_h^n(\z^-)|}  \\
  =&\l1-\frac1{|\widetilde\m_h^n(\z^-)|}\r \mathfrak D I_h\m(t_n) -\widetilde\m_h^n(\z^+) \mathfrak D \frac {1}{|\widetilde\m_h^n|} + \frac {\mathfrak D \widetilde\e_h^n}{|\tilde\m_h^n(\z^-)|}.
\end{align*}
Using that $|\tilde\m_h^n(\z^-)|\ge 1$, we have
\[
\left|1-\frac1{|\widetilde\m_h^n(\z^-)|}\right| = \left|\frac{|\widetilde\m_h^n(\z^-)|-|I_h\m(t_n)(\z^-)|}{|\widetilde\m_h^n(\z^-)|}\right|\le|\widetilde\e_h^n(\z^-)|.
\]
Moreover, by \eqref{b1} and the nodal identity
\(|I_h\m(t_n)(\z)|=1\), for sufficiently small \(h\) and
\(\Delta t\),
\[
  |\widetilde\m_h^n(\z)|
  \le |I_h\m(t_n)(\z)|+|\widetilde\e_h^n(\z)|
  \le 2,
  \qquad \forall \z\in\mathcal Q.
\]
Together with 
\begin{equation}\label{p9}
\norm{\nabla I_h\m(t_n)}_{L_h^\infty}\le C\norm{\nabla I_h\m(t_n)}_{L^\infty}\le C\norm{\m(t_{n})}_{W^{1,\infty}},
\end{equation}
we have for sufficiently small $h$ and $\Delta t$
\begin{align*}
  |\mathfrak D \e_h^n|\le C|\widetilde\e_h^n(\z^-)| + C|\mathfrak D \widetilde\e_h^n| + C\left|\mathfrak D \frac1{|\widetilde\m_h^n|}\right|
\end{align*}
We estimate the last term, decomposing \(\widetilde\m_h^n\) as $I_h\m(t_n)-\widetilde\e_h^n$, we obtain
\begin{align*}
&\left|
|\widetilde\m_h^n(\z^+)|^2 - |\widetilde\m_h^n(\z^-)|^2 \right| \\
=&\left| -2 I_h\m(t_n)(\z^+)\cdot\widetilde\e_h^n(\z^+) + 2 I_h\m(t_n)(\z^-)\cdot\widetilde\e_h^n(\z^-) + |\widetilde\e_h^n(\z^+)|^2 - |\widetilde\e_h^n(\z^-)|^2 \right| \\
\le&C h_i\l |I_h\m(t_n)(\z^+)| |\mathfrak D\widetilde\e_h^n| +  |\mathfrak D I_h\m(t_n)| |\widetilde\e_h^n(\z^-)| + |\mathfrak D\widetilde\e_h^n|( |\widetilde\e_h^n(\z^-)| + |\widetilde\e_h^n(\z^+)| )\r.
\end{align*}
Using \eqref{b1} and \eqref{p9} and noting that $|a - b| = \frac{|a^2 - b^2|}{a+b}\le |a^2- b^2|$ for $a, b\ge 1$, we obtain
\begin{equation*}
\frac{\left| |\widetilde\m_h^n(\z^+)| - |\widetilde\m_h^n(\z^-)| \right| }{h_i}\le C|\mathfrak D\widetilde\e_h^n| + C
|\widetilde\e_h^n(\z^-)| .
\end{equation*}
Since the function $s\mapsto 1/s$
is Lipschitz on \([1,\infty)\), the above inequality implies
\begin{equation*}
\left| \mathfrak D \frac1{|\widetilde\m_h^n|} 
\right| \le C|\mathfrak D\widetilde\e_h^n| +C
|\widetilde\e_h^n(\z^-)|.
\end{equation*}

Thus, we have the local edge estimate
\begin{equation*}
|\mathfrak D\e_h^n| \le C|\mathfrak D\widetilde\e_h^n| + C\left(
|\widetilde\e_h^n(\z^-)|+ |\widetilde\e_h^n(\z^+)| \right),
\end{equation*}
which further gives
\begin{align*}
  \norm{\partial_i\e_h^n}_{L_h^2(K)}^2
  &\le C\norm{\partial_i\widetilde\e_h^n}_{L_h^2(K)}^2 + C \sum_{\alpha'\in\{0,1\}^{d-1}}
  \omega_K \left( |\widetilde\e_h^n(\z_{K,i,\alpha'}^-)|^2 + |\widetilde\e_h^n(\z_{K,i,\alpha'}^+)|^2\right) \\
  &\le C\norm{\partial_i\widetilde\e_h^n}_{L_h^2(K)}^2 + C\norm{\widetilde\e_h^n}_{L_h^2(K)}^2.
\end{align*}
Summing over \(i=1,\ldots,d\) and over all \(K\in\mathscr T_h\), we finish the proof.
\section{Proof of Lemma \ref{lemma8}}\label{u4}
Assume that the grid points in the \(x_i\)-direction are
\(\{x_{i,k}\}_{k=0}^{N_i}\). Let
\[
  \z=\z_{\mathbf k}
  =(x_{1,k_1},\ldots,x_{d,k_d})
\]
be an interior node. For \(1\le k_i\le N_i-1\), set
\[
  h_i^+=x_{i,k_i+1}-x_{i,k_i},
  \qquad
  h_i^-=x_{i,k_i}-x_{i,k_i-1}.
\]
and define
\[
  \z_{i,+}\coloneqq\z_{\mathbf k+\mathbf e_i},
  \qquad
  \z_{i,-}\coloneqq\z_{\mathbf k-\mathbf e_i}.
\]
For a grid function \(\f\), define
\[
  D_i^+\f(\z)
  =
  \frac{\f(\z_{i,+})-\f(\z)}{h_i^+},
  \qquad
  D_i^-\f(\z)
  =
  \frac{\f(\z)-\f(\z_{i,-})}{h_i^-}.
\]

On a Cartesian tensor-product \(Q_1\) mass-lumped grid, the nodal values
of \(\widetilde\Delta_h\v_h\) coincide at interior nodes with those of
the finite-difference operator
\[
  \Delta_h^{\rm FD} \v_h(\z)
  =
  \sum_{i=1}^d
  \frac{2}{h_i^++h_i^-}
  \left(D_i^+\v_h(\z)-D_i^-\v_h(\z)\right).
\]
At boundary nodes, the corresponding one-sided stencil is obtained by
omitting the nonexistent forward or backward differences. More precisely,
if \(k_i=0\), only \(D_i^+\) appears in the \(i\)-th direction, while if
\(k_i=N_i\), only \(D_i^-\) appears. The coefficients in these one-sided
stencils satisfy the same local boundedness property as those in the
interior stencil. Hence the pointwise estimate derived below remains
valid at boundary nodes, with the nonexistent differences omitted and
with a possibly different constant independent of \(h\).

We next show that the derivatives of the nodal projection are uniformly
bounded on the relevant line segments. By \eqref{b1} and the regularity
of the exact solution, for sufficiently small \(h\) and \(\Delta t\),
\[
\begin{aligned}
  |\widetilde\m_h^n(\z_{i,\pm})-\widetilde\m_h^n(\z)|
  &\le
  |I_h\m(t_n)(\z_{i,\pm})-I_h\m(t_n)(\z)|
  +|\widetilde\e_h^n(\z_{i,\pm})|
  +|\widetilde\e_h^n(\z)|  \\
  &\le
  Ch+2\|\widetilde\e_h^n\|_{L_h^\infty}
  \le \frac12 .
\end{aligned}
\]
Since \(|\widetilde\m_h^n(\z)|\ge1\) at all nodes, every point
\[
  \bm\xi_{i,\pm}
  =
  \widetilde\m_h^n(\z)
  +\theta\bigl(\widetilde\m_h^n(\z_{i,\pm})
  -\widetilde\m_h^n(\z)\bigr),
  \qquad 0\le\theta\le1,
\]
on the segment joining \(\widetilde\m_h^n(\z)\) and
\(\widetilde\m_h^n(\z_{i,\pm})\) satisfies
\[
  |\bm\xi_{i,\pm}|
  \ge
  |\widetilde\m_h^n(\z)|
  -
  |\widetilde\m_h^n(\z_{i,\pm})-\widetilde\m_h^n(\z)|
  \ge \frac12 .
\]

Let
\[
  P(\s)=\frac{\s}{|\s|},
  \qquad
  \s\in\mathbb R^3\setminus\{\mathbf0\}.
\]
Here \(D\) and \(D^2\) denote the first and second Fréchet derivatives
with respect to \(\s\). We use \(\|DP(\s)\|\) and
\(\|D^2P(\s)\|\) for the operator norm of
the map \(DP(\s):\mathbb R^3\to\mathbb R^3\) and
\(D^2P(\s):\mathbb R^3\times\mathbb R^3\to\mathbb R^3\) respectively.
Since \(|\bm\xi_{i,\pm}|\ge1/2\), and since
\[
  \|DP(\s)\|\le C|\s|^{-1},
  \qquad
  \|D^2P(\s)\|\le C|\s|^{-2},
\]
we have
\[
  \|DP(\bm\xi_{i,\pm})\|
  +
  \|D^2P(\bm\xi_{i,\pm})\|
  \le C .
\]

Taylor's formula gives
\[
\begin{aligned}
  D_i^\pm P(\widetilde\m_h^n)(\z)
  &=
  DP(\widetilde\m_h^n(\z))D_i^\pm\widetilde\m_h^n(\z)
  \pm
  \frac{h_i^\pm}{2}
  D^2P(\bm\xi_{i,\pm})
  \left[
    D_i^\pm\widetilde\m_h^n(\z),
    D_i^\pm\widetilde\m_h^n(\z)
  \right].
\end{aligned}
\]
Therefore, at an interior node $\z$,
\[
\begin{aligned}
  &(\widetilde\Delta_h\m_h^n)(\z)=
  \Delta_h^{\rm FD}P(\widetilde\m_h^n)(\z)  \\
  &=
  DP(\widetilde\m_h^n(\z))
  (\widetilde\Delta_h\widetilde\m_h^n)(\z)+
  \sum_{i=1}^d
  \frac{h_i^+}{h_i^++h_i^-}
  D^2P(\bm\xi_{i,+})
  \left[
    D_i^+\widetilde\m_h^n(\z),
    D_i^+\widetilde\m_h^n(\z)
  \right]  \\
  &\quad+
  \sum_{i=1}^d
  \frac{h_i^-}{h_i^++h_i^-}
  D^2P(\bm\xi_{i,-})
  \left[
    D_i^-\widetilde\m_h^n(\z),
    D_i^-\widetilde\m_h^n(\z)
  \right].
\end{aligned}
\]
The same bound holds at boundary nodes with the nonexistent one-sided
differences omitted. Consequently, for all $\z\in\mathcal{Q}$
\begin{equation}\label{p6-pointwise}
  |(\widetilde\Delta_h\m_h^n)(\z)|
  \le
  C|(\widetilde\Delta_h\widetilde\m_h^n)(\z)|
  +
  C\sum_{i=1}^d
  \left(
    |D_i^+\widetilde\m_h^n(\z)|^2
    +
    |D_i^-\widetilde\m_h^n(\z)|^2
  \right).
\end{equation}

Multiplying \eqref{p6-pointwise} by the nodal quadrature weights and
summing over all nodes yields
\begin{align}\label{t8}
  \|\widetilde\Delta_h\m_h^n\|_h^2
  &\le
  C\|\widetilde\Delta_h\widetilde\m_h^n\|_h^2
  +
  C\sum_{K\in\mathscr T_h}
  \sum_{\z\in\mathcal Q_K}
  \omega_K
  \sum_{i=1}^d
  |\partial_i(\widetilde\m_h^n|_K)(\z)|^4  \notag\\
  &\le
  C\|\widetilde\Delta_h\widetilde\m_h^n\|_h^2
  +
  C\|\nabla\widetilde\m_h^n\|_{L_h^4}^4 .
\end{align}
Here we used the edge-difference representation of the broken
Gauss--Lobatto gradient norm and the finite overlap of neighboring
one-sided differences.

It remains to bound the two terms on the right-hand side. By \eqref{u8}
and the induction assumption \eqref{e1},
\[
  \|\widetilde\Delta_h\widetilde\m_h^n\|_h
  \le
  \|\widetilde\Delta_h I_h\m(t_n)\|_h
  +
  \|\widetilde\Delta_h\widetilde\e_h^n\|_h
  \le C+M .
\]
Next, using \eqref{p7}, \eqref{b6}, Lemma~\ref{lemma3}, and
\eqref{e1}, we obtain
\[
\begin{aligned}
  \|\nabla\widetilde\e_h^n\|_{L_h^4}
  &\le
  C\|\nabla\widetilde\e_h^n\|_{L^4}
  \le
  C\|\nabla\widetilde\e_h^n\|_{L^6}\le
  C\|\Delta_h\widetilde\e_h^n\|
  \le
  C\|\widetilde\Delta_h\widetilde\e_h^n\|_h
  \le CM .
\end{aligned}
\]
Thus,
\[
  \|\nabla\widetilde\m_h^n\|_{L_h^4}
  \le
  \|\nabla I_h\m(t_n)\|_{L_h^4}
  +
  \|\nabla\widetilde\e_h^n\|_{L_h^4}
  \le C(1+M).
\]
Combining the above estimates in \eqref{t8}, we obtain
\[
  \|\widetilde\Delta_h\m_h^n\|_h
  \le C(1+M)^2\le C_0(M) \coloneqq\max\{1, C(1+M)^2\}.
\]
\section*{Acknowledgments}
\bibliographystyle{siamplain}
\bibliography{references}
\end{document}